\newtheorem{theorem}{Theorem}[section] 
\newtheorem{lemma}{Lemma}
\newtheorem{proposition}{Proposition}
\newtheorem{corollary}{Corollary}[theorem]
\title{The Plateau Problem of Michell Trusses and Orthogonality in Springs}
\author{Chengcheng Yang}
\date{Virginia Tech\\
chengchengyang24@vt.edu\\
\today}
\begin{document}

\maketitle

\section{Abstract}
Given finitely many pointed forces in the plane. Suppose that these forces sum up to zero and their net torques also sum up to zero. It is proved in \cite{G1} that there exists a system of springs whose boundary forces exactly counter-balance these pointed forces. We will generalize to higher dimensions using the Cauchy stress tensor for elastic materials. 

Given a system of springs, we can multiply the length of each spring with its corresponding spring constant and then sum these products up. The result is called the total mass of the system. We are interested in the Plateau problem of the existence of the minimal spring system given a boundary condition; moreover, research is still going on searching for an algorithm for a minimizer.  

This minimization problem was first introduced in 1904 by A. Michell \cite{M}. He showed that a minimizer could smear out. The Michell Truss became known in mechanical engineering. It raised attention in optimal design, such as minimizing costs in building bridges. In 1960s and 1970s, the problem was developed using PDE and convex analysis by introducing an equivalent dual maximization problem. \cite{G1} \cite{Hemp} \cite{HP} \cite{M} In 2008, Bouchitt\'{e}, Gangbo, and Sppecher introduced lines of principal actions to generalize Hencky-Prandtle net to higher dimensional duality and proved that the minimizer can be found provided that it exists.  \cite{G2} In the unpublished notes of Gangbo \cite{G1}, he also showed that if springs of the same kind are optimal.

In this paper, we are going to solve the Plateau problem using two different tools in GMT: first, a minimizer can be viewed as a flat chain complex\cite{W}; second, a minimizer can also be viewed as a current \cite{F}. In Chen's thesis \cite{C}, she solved the Plateau problem for one dimension using flat chains with vector coefficients and varifolds with a new notion of signed first variation. We will extend the flat chains with matrix coefficients to all dimensions.  

At the end, we are going to show one progress in discovering the topological properties of minimizers: compressed and stretched springs must be perpendicular to each other at non-boundary points.

I appreciate my advisor Prof. Robert Hardt for communicating with me regularly on this problem.

\section{Introduction}
In classic mechanics, suppose there are forces $\mathbf{F}_i$ applied at single points $M_i$ denoted as
\begin{equation*}
\mathbf{F} = \displaystyle \sum_{i=1}^l \mathbf{F}_i \delta_{M_i}, 
\end{equation*}
where $\delta_{M_i}$ is the dirac mass at $M_i$. 
If $\mathbf{F}$ is a system in equilibrium, namely the net force and the net torque are both equal to zero:
\begin{equation*}
\displaystyle\sum_{i=1}^l \mathbf{F}_i = 0, \ \ \displaystyle\sum_{i=1}^l \mathbf{F}_i \wedge M_i = 0, 
\end{equation*}
one can decompose the system into finitely many 1-dimensional beams as follows \cite{G1}:
\begin{equation*}
\mathbf{F} = \displaystyle\sum_{i, j=1}^{l'} \lambda_{ij} Beam([a_i, a_j]), 
\end{equation*}
where 
\begin{equation*}
Beam([a_i, a_j]) = (\delta_{a_j} - \delta_{a_i}) \frac{a_j - a_i}{\vert a_j -a_i \vert},
\end{equation*}
and $(\lambda_{i, j})$ is a symmetric matrix. Here $\lambda_{i, j}$ is the stress coefficient associated with the Beam$[a_i, a_j]$.

Now we generalize that to consider external forces which are uniformly distributed over $(k-1)$-simplices $[a_0^i, \ldots, a_{k-1}^i]$ denoted as
\begin{equation}\label{ycc:asystemofforcesk-1}
\mathbf{F} = \displaystyle \sum_{i=1}^l \mathbf{F}_i \mathcal{H}^{k-1}_{\vert [a_0^i, \ldots, a_{k-1}^i]},
\end{equation}
where $\mathcal{H}^{k-1}_{\vert [a_0^i, \ldots, a_{k-1}^i]}$ is the $(k-1)$-dimensional Hausdorff measure in $\mathbb{R}^n$ restricted to the simplex $[a_0^i, \ldots,  a_{k-1}^i]$; and $\mathbf{F}_i$ is the (constant) force exerted on $[a_0^i, \ldots,  a_{k-1}^i]$. 
If $n \geq k+1$ and the same equilibrium conditions are satisfied, then we can show that $\mathbf{F}$ can be decomposed into finitely many $k$-dimensional simplices with elasticities known as Cauchy stress tensors. In other words, there exist a $k$-dimensional elastic simplicial complex whose elastic forces on the boundary exactly counterbalance the system $\mathbf{F}$ provided that the support of $\mathbf{F}$ consists of finitely many $(k-1)$-simplices. The proof is technical and we employed tools such as differential forms, Hodge star operator, left interior multiplication. 

Physically, one can image finitely many surfaces under external forces, and the question is whether there exist elastic bodies whose boundary forces match those on the surfaces. 



\subsection{Rank 1 Diagonal Matrices}
Let $D$ be an $n \times n$ diagonal matrix, then $D$ has rank 1 if and only if $D = \lambda e_i \otimes e_i$ for some nonzero real number $\lambda$ and $i \in \{1, \ldots, n\}$. Here we use the notation that given any vector $v = (v_1, \ldots, v_n)$, $v \otimes v$ denotes the $n \times n$ matrix whose $(i, j)$-entry is equal to $v_iv_j$. 

In general, let $A$ be an $n \times n$ symmetric matrix, then $A$ has rank 1 if and only if $A = \lambda v \otimes v$ for some unit vector $v \in \mathbb{R}^n$ and some nonzero real number $\lambda$. We may choose an orthogonal matrix $O$ such that $D = O^TA O$ is diagonal. Then rank($A$) = rank($D$) = 1.

\subsection{Polyhedral $k$-chains in $\mathbb{R}^n$}
For integers $0 \leq k \leq n$ and geometrically independent points $a_0, a_1, \ldots, a_k$, the $k$-simplex spanned by $a_0, a_1, \ldots, a_k$ is the set (convex hull) of all points $x$ of $\mathbb{R}^n$ such that 
\[x = \displaystyle \sum_{i = 0}^k t_i a_i, \text{ where } \displaystyle \sum_{i=0}^k t_i = 1, \text{ and }t_i \geq 0. \]
Since there are two orientations, we use $[a_0, a_1, \ldots, a_k]$ to denote the oriented $k$-simplex consisting of the simplex $a_0, a_1, \ldots, a_k$ and the equivalence class of the particular ordering $(a_0, a_1, \ldots, a_k)$.

Let $A_k$ denote the collection of all oriented $k$-simplices in $\mathbb{R}^n$. Then we define $C_k$ to be the real vector space generated by $A_k$, mod out the subspace that is generated by elements of the form 
\[\sigma + \hat{\sigma}, \text{ and } \sigma_1 + \sigma_2 - \sigma_1 \cup \sigma_2,\]
where $\hat{\sigma}$ has the opposite orientation of $\sigma$. And $\sigma_1$ and $\sigma_2$ share a common $(k-1)$-dimensional face and their union is another $k$-simplex. 

For $k \geq 1$, the boundary operator $\partial$ is defined as usual: 
\[\partial [a_0, \ldots, a_k] = \displaystyle \sum_{i=0}^k (-1)^k [a_0, \ldots, \hat{a_i}, \ldots, a_k],\] 
where $\hat{a_i}$ means omitting $a_i$. One needs to check the well-definedness on $C_k$. 
When $k=1$, suppose an oriented 1-simplex $[a, b]$ is being subdivided by an interior point $c$, we orient the two new $1$-simplices as $[a, c]$ and $[c, b]$ so that 
\[\partial ([a, c] + [c, b] - [a, b]) = 0. \]
In general, suppose we've have done so for $C_{k-1}$ such that $\partial$ is well-defined. Then given a $k$-simplex $\sigma$ being subdivided into two simplices $\sigma_1$, $\sigma_2$ of dimension $k$, there is a vertex $w$ and a face $s$ such that $\sigma$ is the cone $[w, s]$ of $w$ over $s$, together with $s$ being subdivided into two simplices of dimension $k-1$, calling them $s_1$, $s_2$. It follows that $\sigma_i$ is the cone $[w, s_i]$ for $i=1,2$. Note that if $s=[a_0, \ldots, a_{k-1}]$ is an oriented simplex in $C_{k-1}$, the bracket $[w, s]$ denotes the oriented simplex $[w, a_0, \ldots, a_{k-1}]$. 
Then one may check 
\begin{eqnarray*}
&& \partial \sigma_1 + \partial \sigma_2 - \partial \sigma \\
&=& \partial([w, s_1] + [w, s_2] - [w, s]) \\
&=& (s_1 - [w, \partial s_1]) + (s_2 - [w, \partial s_2]) - (s - [w, s])\\
&=& (s_1 + s_2 - s) - [w, \partial (s_1 + s_2 - s)]\\
&=& 0.
\end{eqnarray*}
The last step is by the inductive definition. Therefore {\bf a polyhedral $k$-chain} in $C_k$ is a finite real linear combination of (possibly overlapping) oriented $k$-simplices. It is equivalent to a finite real linear combination of disjoint (except on their boundaries) oriented $k$-simplices. Moreover, applying the boundary operator $\partial$ to both yields the same result. 
Given that $\partial^2=0$, we obtain a chain of real vector spaces: 
\[\{C_k, \partial\} = C_n \xrightarrow{\partial} \cdots \xrightarrow{\partial} C_0 \rightarrow 0,\] 
when the ambient space is $\mathbb{R}^n$. 

Imitating the de Rham theorem, one can easily define the integral 
\[\int_{\sigma} \omega = \int_{\Delta_k} \sigma^{\ast}\omega, \text{ where } \sigma: \Delta_k = [e_0, \ldots, e_k] \rightarrow [a_0, \ldots, a_k], \]
for any smooth differential $k$-form $\omega$ and any oriented $k$-simplex $\sigma = [a_0, \dots, a_k]$. Extending by linearity, for any $k$-chain $c$ one can define
\[\int_{c} \omega = \sum_{i = 1}^m c_i \int_{\sigma_i} \omega, \, \text{ if } c = \sum_{i=1}^m c_i \sigma_i.\]

\subsection{Stressed $k$-chains in $\mathbb{R}^n$}
In Solid Mechanics, the {\it Cauchy Stress Tensor} is a $3 \times 3$ symmetric matrix $A$ that tells you the force of one part of the body acting on the other. More precisely, each point $a$ of an elastic body in $\mathbb{R}^3$ is associated with an $A$  such that if a plane passing through $a$ with a unit normal $v$, the force of the half space $\{x \in \mathbb{R}^3: (x - a) \cdot v < 0\}$ acting on the plane per unit area is given by $Av$. Here we are going to use the same definition with $\mathbb{R}^3$ replaced by $\mathbb{R}^n$ for any integer $n \geq 1$. 

A (constantly) {\it stressed polyhedral $k$-simplex} is a tensor product $A \otimes \sigma$ where $\sigma$ is an oriented $k$-simplex and $A$ is a symmetric $n \times n$ real matrix. Therefore a {\it stressed polyhedral $k$-chain} is a finite real linear combination of stressed polyhedral $k$-simplices. 

\subsection{One Dimensional Example: Stressed Springs.} Suppose $n=2$, $k=1$, and $a = (0, 0)$, $b=(1, 0)$ are two points on the horizontal axis. Let $\sigma$ be the oriented 1-simplex $[a, b]$ and $A = D_{\mu e_1} =\mu(e_1 \otimes e_1)$. 

For $\mu > 0$, $A \otimes \sigma$ acts like a {\it compressed spring} because the external force in the positive $e_1$ direction is a positive multiple of $e_1$. A compressed spring means that it pushes outward on its endpoints. 

Similarily, for $\mu < 0$, $A \otimes \sigma$ acts like a {\it stretched spring} because the exteranl force in the positive $e_1$ direction is a negative multiple of $e_1$. A stretched spring means that it pulls inward on its endpoints. 

A general {\it structurally stressed spring} between two arbitrary points $a, b \in \mathbb{R}^n$ is 
described by a {\it structurally stressed} 1-simplex in the form of $D_{\mu(b-a)} \otimes [a, b]$, where $D_{\mu(b-a)}$ describes the coefficient of $[a, b]$. Note that the eigenvalue vector of $D_{\mu(b-a)}$ lies in the linear space spanned by $(b-a)$.

Michel strusses are finite linear combinations of structurally stressed springs that efficiently balance a given system of pointed vector forces. 

Note that a simple {\it nonstructurally stressed} 1-simplex is $D_{e_2} \otimes [a, b]$ with $a = (0, 0), b=(1, 0)$. 

\subsection{Higher Dimensional Definitions.} We will define  a stressed $k$-simplex $A \otimes \sigma$ to be {\it structurally stressed} if all the eigenvectors of $A$ lie in the tangent space of $\sigma$. 

A {\it stressed polyhedral $k$-chain} is a finite sum $\sum_{i = 1}^m \lambda_i A_i \otimes \sigma_i$ of stressed $k$-simplices. The collection of these gives a real vector space 
\[\mathcal{P}_k (\mathbb{R}^n; Sym_n) = Sym_n \otimes C_k.\]
For $k \geq 1$, the usual formula for the boundary $\partial \sigma$ of an oriented simplex $\sigma$ leads to a well-defined {\it boundary operator}
\[\partial: \mathcal{P}_k (\mathbb{R}^n; Sym_n) \rightarrow \mathcal{P}_{k-1} (\mathbb{R}^n; Sym_n), \, 
\partial (\sum_{i = 1}^m \lambda_i A_i \otimes \sigma_i) = \sum_{i = 1}^m \lambda_i A_i \otimes \partial \sigma_i\]
A polyhedral chain is {\it structurally stressed} if all its simplices are structurally stressed.

The boundary of a structurally stressed polyhedral $k$-simplex is usually {\it not} entirely structurally stressed. The situation is trivial when $k=1$ because a nonzero coefficient $A$ would contain an eigenspace of dimension $\geq 1$ which doesn't exist in the tangent space of a point. A more interesting and instructive elementary example is as follows.

\subsection{A Uni-directional Stretch of a Rectangle.}
Suppose $n=2$, $k=2$, and $S$ is a counterwise-oriented coordinate rectangle $[a, b] \times [c, d] \in \mathbb{R}^2$ for some real numbers $a < b, c< d$. Let $A$ be the simple horizontal stretching $D_{e_1} = (e_1 \otimes e_1)$. One can imagine $S$ is a horizontally stretched piece of elastic cloth.
 Then $\partial S$ is the sum of four terms corresponding to the four oriented edges of $S$:
\[\partial S = A \otimes [(a, c), (b, c)] + A \otimes [(b, c), (b, d)] + A \otimes [(b, d), (a, d)] + A \otimes [(a, d), (a, c)].\]
The first and third are top and bottom edges, being stretched horizontally. So they are structurally stressed. But the second and forth are right and left edges, being neither compressed nor stretched, but instead pulled apart horizontally. So they are nonstructurally stressed. 

Therefore in general the boundary of a structurally stressed simplex has two parts: the stress in the boundary and the external force coming from the original stressed simplex. Let's look at another example.

\subsection{A Uni-directional Stretch of a Triangle} Next let's cut the rectangle $S$ along a diagonal. Then for either of the resulting two right triangles, the diagonal boundary edge would be both stretched internally and pulled externally. First we abbreviate the three oriented edges of the lower stressed right triangle $T$ as the oriented 1-simplices:
\[I = [(a, c), (b, c)], \, J = [(a, c), (a,d)], \, K = [(a, d), (b, c)]. \]
See Figure \ref{fig:stressforce}.

\begin{figure}
    \centering
    \includegraphics[width=1\linewidth]{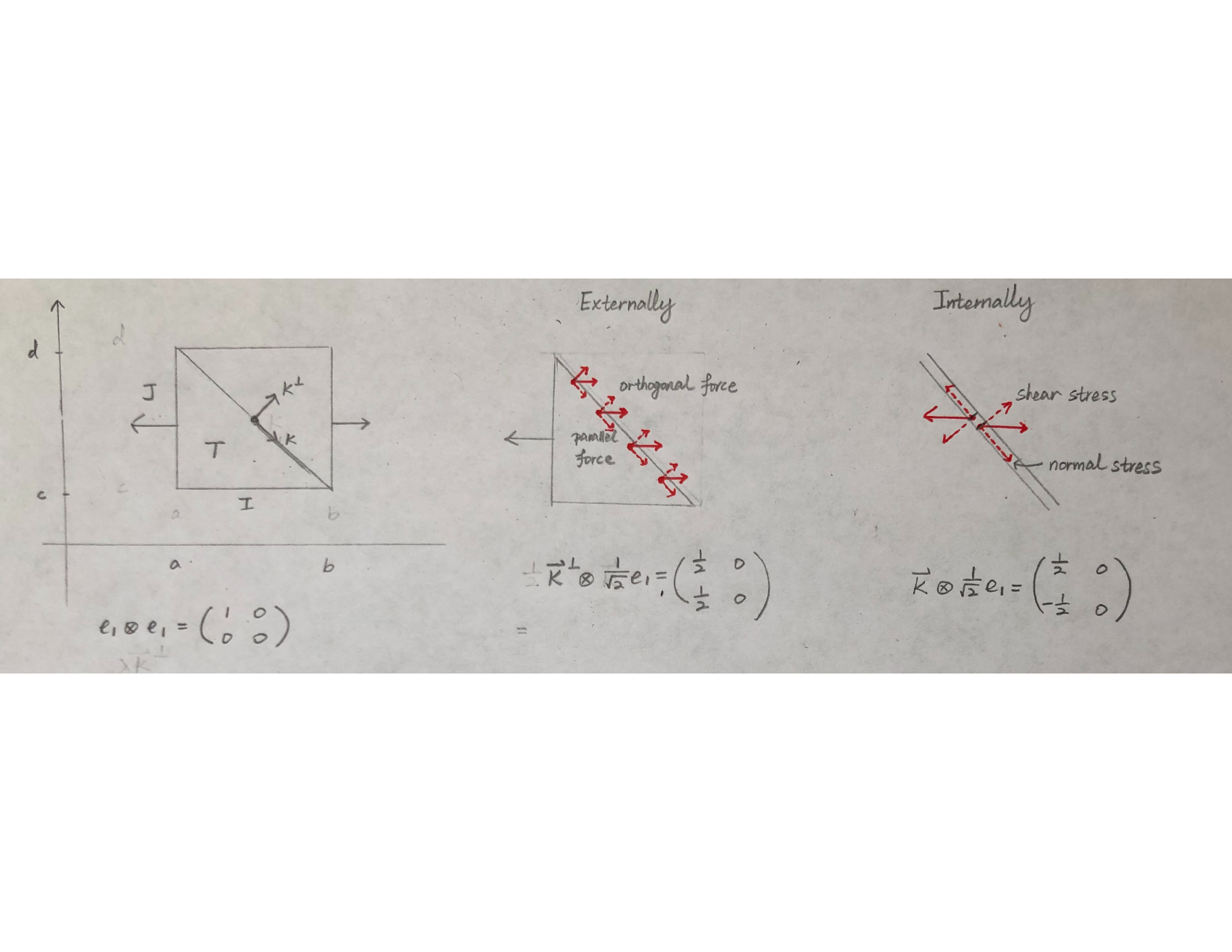}
    \caption{The decomposition of the matrix.}
    \label{fig:stressforce}
\end{figure}

To describe the effect of the horizontal stretch $A$ on the diagonal edge $K$, we simply orthogonally decompose $e_1$ in terms of its components:
\[e_1 = \frac{1}{\sqrt{2}} \vec{K} + \frac{1}{\sqrt{2}} \vec{K}^{\perp},\] 
where $\vec{K} = (\frac{1}{\sqrt{2}}, -\frac{1}{\sqrt{2}})$  is the unit vector of $K$, and $\vec{K}^{\perp} =(\frac{1}{\sqrt{2}}, \frac{1}{\sqrt{2}})$ is outward pointing unit vector perpendicular to $\vec{K}$. 

Then 
\begin{equation*}
\begin{split}
e_1\otimes e_1 &= (\frac{1}{\sqrt{2}} \vec{K} + \frac{1}{\sqrt{2}} \vec{K}^{\perp})\otimes (\frac{1}{\sqrt{2}} \vec{K} + \frac{1}{\sqrt{2}} \vec{K}^{\perp}) \\
& =  \left[1/2 (\vec{K}^{\perp} \otimes \vec{K}^{\perp}) +1/2 (\vec{K}^{\perp} \otimes \vec{K})\right] + \left[1/2 \vec{K} \otimes \vec{K} + 1/2 \vec{K} \otimes \vec{K}^{\perp}\right] \\
& = \vec{K}^{\perp}  \otimes \frac{1}{\sqrt{2}}e_1 +  \vec{K} \otimes \frac{1}{\sqrt{2}} e_1.
\end{split}
\end{equation*}

The external force on the diagonal edge $K$, coming from the lower triangle, is 
represented by the matrix: $\vec{K}^{\perp}  \otimes \frac{1}{\sqrt{2}}e_1$. If we multiply this matrix with $\vec{K}^{\perp}$, we obtain $\frac{1}{\sqrt{2}}e_1$. Since this is the force per unit length, we need to multiply it with the length of the diagonal, which is $\sqrt{2}$, and we get $e_1$ as desired. Moreover, this external force has two components: one is orthogonal to $K$ and the other is parallel to $K$ as expressed into the following two matrices:
\[1/2 (\vec{K}^{\perp} \otimes \vec{K}^{\perp}), 1/2 (\vec{K}^{\perp} \otimes \vec{K}).\]

On the other hand, at each interior point of the diagonal edge $K$, there is a pair of opposite horizontal forces $\{e_1, -e_1\}$, which contributes to the stress inside the edge. It is represented by the matrix: $\vec{K} \otimes \frac{1}{\sqrt{2}} e_1$. We can also decompose the stress. The pair of forces parallel to the diagonal gives us the normal stress, just like a spring, and it is given by the matrix: $1/2 \vec{K} \otimes \vec{K}$; the pair of forces orthogonal to the diagonal gives us the shear stress, unlike a spring but tearing the spring apart, and it is given by the matrix: $1/2 \vec{K} \otimes \vec{K}^{\perp}$.

\subsection{Multi-directional Stretches of a Rectangle.}
By the spectral theorem one can similarly understand the higher rank case by writing any symmetric matrix $A$ as 
\[A = \displaystyle \sum_{i=1}^n \mu_i v_i \otimes v_i= \displaystyle \sum_{i=1}^n D_{\mu_iv_i}, \]
where $\{v_i\}$ is an orthonormal basis consisting of eigenvectors of $A$, and the $\mu_i$ are their corresponding eigenvalues. 

As a specific example of this, suppose that one works with the same rectangle $S$ as above but replaces $D_{e_1}$ by $ A = D_{e_1} + D_{e_2}$. Intuitively, imagine that an elastic cloth is made of both horizontal and vertical threads with possibly different elasticity. Then the resulting $A$ stressed rectangle is now being {\it simultaneously} stretched both horizontally and vertically. The top and bottom edges are simultaneously stretched horizontally and pulled apart vertically, while the right and left edges are simultaneously stretched vertically and pulled horizontally.

\subsection{Decomposition Theorem}
Turning now to $(k-1)$-dimensional stressed simplex $A \otimes \tau$ occurring in the boundary expansion of a general structurally stressed $k$-simplex $A \otimes \sigma$ in $\mathbb{R}^n$. It suffices to work on the rank 1 case, say $A = \mu e_1 \otimes e_1$. 
In the $k$-dimensional tangent space of $\sigma$, let $\vec{\tau}^{\perp}$ be the outward normal unit vector to the tangent space of $\tau$. As before, we find the orthogonal decomposition of $e_1$ in terms of $\vec{\tau}^{\perp}$ and its projection into the tangent space of $\tau$.

Notice that when we decompose the symmetric matrix $A$ into two submatrices, neither of the two matrices is symmetric. Therefore, the general vector space $M_n$ of all $n \times n$ real matrices is considered. We can now state the following theorem.

\begin{theorem}\label{ycc:thm1} 
For any structurally stressed polyhedral $k$-chain P in $\mathbb{R}^n$, the $(k-1)$-stressed chain $\partial P$ admits a unique decomposition:
\begin{equation}\label{ycc:thm1:eqn1}
\partial P = S + F, 
\end{equation}
where $S$ consists of simplices in $M_n \otimes C_{k-1}$ with only normal and shear stresses and $F$ consists of simplices in $M_n \otimes C_{k-1}$   with only externally orthogonal and parallel forces. 
\end{theorem}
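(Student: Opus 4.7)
My plan is to reduce to a single rank-one structurally stressed simplex, run the orthogonal decomposition described before the theorem statement, and then argue that the resulting splitting is forced by a direct-sum decomposition of $M_n$ relative to each face.

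First I would use linearity to reduce to the case $P = A\otimes\sigma$ for a single structurally stressed $k$-simplex. Since $A$ is symmetric, the spectral theorem lets me write $A=\sum_{i=1}^{n}\mu_i\, v_i\otimes v_i$, and the structural stress hypothesis forces every eigenvector $v_i$ with $\mu_i\ne 0$ to lie in the tangent space $T_{\sigma}$. By linearity once more, it therefore suffices to treat a rank-one stressed simplex $\mu\, v\otimes v\otimes\sigma$ with $v\in T_{\sigma}$, and check on each oriented $(k-1)$-face $\tau$ appearing in
\[
\partial(\mu\, v\otimes v\otimes\sigma)=\sum_{j}\varepsilon_{j}\,\mu\, v\otimes v\otimes\tau_{j}.
\]

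For each such face $\tau=\tau_j$, write $\vec{\tau}^{\perp}$ for the unit outward normal to $T_{\tau}$ inside $T_{\sigma}$ (this is a single direction because $T_{\sigma}=T_{\tau}\oplus\mathbb{R}\vec{\tau}^{\perp}$), and split $v = v_{\parallel}+c\,\vec{\tau}^{\perp}$ with $v_{\parallel}\in T_{\tau}$. Expanding the tensor yields
\[
v\otimes v=\bigl(v_{\parallel}\otimes v_{\parallel}+c\,v_{\parallel}\otimes\vec{\tau}^{\perp}\bigr)+\bigl(c\,\vec{\tau}^{\perp}\otimes v_{\parallel}+c^{2}\,\vec{\tau}^{\perp}\otimes\vec{\tau}^{\perp}\bigr).
\]
The first parenthesis is exactly the normal stress plus shear stress described for the triangle example, and the second is the external parallel force plus external orthogonal force; declaring the first parenthesis to be the $\tau_j$-contribution to $S$ and the second to be the $\tau_j$-contribution to $F$ gives the required decomposition after summing over $j$, over eigenvectors, and over simplices of $P$.

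For uniqueness, I would observe that on each face $\tau_j$ the splitting above realizes the direct-sum decomposition of matrix space
\[
M_{n}=\bigl(T_{\tau_{j}}\otimes\mathbb{R}^{n}\bigr)\oplus\bigl(T_{\tau_{j}}^{\perp}\otimes\mathbb{R}^{n}\bigr),
\]
with $S$-matrices characterized by having their left factor in $T_{\tau_{j}}$ and $F$-matrices characterized by having their left factor normal to $T_{\tau_{j}}$ (inside $T_{\sigma}$, along $\vec{\tau}^{\perp}$). Since this decomposition of $M_{n}$ is canonical once $\tau_{j}$ and its parent tangent space $T_{\sigma}$ are fixed, any other splitting $\partial P=S'+F'$ satisfying the two one-sided conditions would agree with $(S,F)$ face by face. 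The main bookkeeping obstacle I anticipate is not the algebra itself but making the face-by-face formulation of uniqueness rigorous when several $k$-simplices of $P$ share a common $(k-1)$-face with different orientations or different parent tangent spaces; handling this cleanly requires stating the decomposition for each oriented face together with its parent simplex before summing, after which cancellations in $\partial P$ can be taken at the end without disturbing the $S/F$ partition.
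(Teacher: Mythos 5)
Your proof follows essentially the same route as the paper's: reduce to a rank-one structurally stressed simplex via the spectral theorem, orthogonally split each eigenvector into its component in $T_{\tau}$ and its component along the outward normal $\vec{\tau}^{\perp}$ inside $T_{\sigma}$, and read off the four tensor blocks (normal stress, shear stress, orthogonal force, parallel force); your coordinate-free expansion of $v\otimes v$ is exactly the paper's block decomposition of $\tilde{A}$ written without the explicit Gram--Schmidt basis. Your uniqueness argument, via the splitting of $M_n$ according to whether the left tensor factor lies in $T_{\tau_j}$ or along $\vec{\tau}^{\perp}$, is in fact more explicit than the paper's, which simply asserts that uniqueness is easy to see.
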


\begin{proof}Given any structurally stressed polyhedral $k$-chain $P$, it is a finite sum of structurally stressed polyhedral $k$-simplex $A \otimes \sigma$, where $\sigma$ is an oriented $k$-simplex and $A \in Sym_{n}$. The spectral theorem assures that $A$ is diagonalizable and so 
\[A = D_{\mu_1 v_1} + \cdots + D_{\mu_n v_n} = \mu_1 (v_1 \otimes v_1) + \cdots + \mu_n (v_n \otimes v_n),\]
where $v_1, \ldots, v_n$ are orthonormal vectors.
By hypothesis, one may assume, without loss of generality, that $v_l$ lies in the tangent space of $\sigma$ for $l \leq k$ and $\mu_l = 0$ for $l > k$. 

Suppose $\sigma = [a_0, \ldots, a_k]$, then span($a_1-a_0, \ldots, a_k-a_0$) is the tangent space of $\sigma$ in $\mathbb{R}^n$. Given that
\[\partial \sigma = \displaystyle \sum_{i=0}^k (-1)^i [a_0, \dots, \hat{a_i}, \dots, a_k],\]
let $s = [a_0, \ldots, \hat{a_i}, \dots, a_{k}]$ be the face opposite to the vertex $a_i$, then the set $\{a_1 - a_0, \ldots, \widehat{a_i - a_0}, \dots, a_k-a_0\}$ forms a basis for the tangent space of $s$; moreover, there is a unit normal vector $\hat{n}$ to the space such that the basis $\{a_1-a_0, \ldots, \widehat{a_i-a_0}, \ldots, a_{k}-a_0, \hat{n}\}$ spans the same space as $\{a_1-a_0, \ldots, a_k-a_0\}$. We may choose $\hat{n}$ to point outward, so that the two bases are consistently oriented. In other words, if the transition matrix $B$ is defined by
\[(-1)^i(a_1-a_0, \ldots, \widehat{a_i-a_0}, \ldots, a_{k}-a_0, \hat{n}) = B(a_1-a_0, \ldots, a_k-a_0),\]
then $B$ has a positive determinant. 

Next following the Gram-Schmidt algorithm there is an orthonormal basis $\{\vec{E}_1, \ldots, \vec{E}_{k-1}\}$ for the tangent space of $s$ which satisfy the following relations: 
\begin{equation*}
\vec{E}_j  = 
\begin{cases} 
\displaystyle\frac{(a_j-a_0) - \sum_{l=1}^{j-1} \langle a_j-a_0, \vec{E}_l \rangle \vec{E}_l}{\vert (a_j-a_0) - \sum_{l=1}^{j-1} \langle a_j-a_0, \vec{E}_l \rangle \vec{E}_l \vert} \ \  \text{ for $j \leq i-1$}, \\
\\
\displaystyle\frac{(a_{j+1}-a_0) - \sum_{l=1}^{j-1} \langle a_{j+1}-a_0, \vec{E}_l \rangle \vec{E}_l}{\vert (a_{j+1}-a_0) - \sum_{l=1}^{j-1} \langle a_{j+1}-a_0, \vec{E}_l \rangle \vec{E}_l \vert} \ \  \text{ for $j\geq i$},
\end{cases}
\end{equation*}
where $\vec{E}_1 = (a_1-a_0)/\vert a_1 - a_0 \vert$. Note that the orientation of the orthonormal basis $\{\vec{E}_1, \ldots, \vec{E}_{k-1}, \hat{n}\}$ is still consistent with the orientation before, because the Gram-Schmidt algorithm was performed under a sequence of elementary matrices with positive determinants. 

Therefore, we may define an orthogonal matrix $O$ as the change of coordinates matrix whose first $k$ columns are
 $O = (\vec{E}_1, \ldots, \vec{E}_{k-1}, \hat{n},  \ldots)$. 
For each $l\leq k$, write $v_l$ as follows:
\begin{equation*}
v_l = \kappa_1^l \vec{E}_1 + \ldots + \kappa_{k-1}^l \vec{E}_{k-1} + \kappa_k^l \hat{n}.
\end{equation*}
Then the matrix $OD_{\mu_lv_l}O^T$ is zero everywhere except the upper left block matrix
\begin{equation*}
\mu_l \left[\begin{matrix} (\kappa_1^l)^2 & \cdots & \kappa_1^l \kappa_k^l \\
\vdots & \ddots & \vdots \\ \kappa_k^l \kappa_1 & \cdots & (\kappa_k^l)^2 \end{matrix} \right] = \mu_l (\kappa_p^l \kappa_q^l).
\end{equation*}
Combining everything together, we have
\begin{equation*}
A  = O\left[ \displaystyle \sum_{l=1}^k \sum_{p, q = 1}^k \mu_l (\kappa_p^l \kappa_q^l)\right]O^T.
\end{equation*}
Let's look at the matrix in the brackets in details. Denote it as $\tilde{A}$, then it can be separated into four matrices as follows:
\begin{eqnarray}\label{ycc:tildeA}
\tilde{A} &=& \left[\begin{matrix} \tilde{a}_{1,1} & \cdots & \tilde{a}_{1, k-1} & 0 \\ 
\vdots & \ddots & \vdots &  \vdots \\ \tilde{a}_{k-1, 1} & \cdots & \tilde{a}_{k-1, k-1} & 0 \\ 
0 & \cdots & 0 & 0 \end{matrix}\right] + 
\left[\begin{matrix} 0 & \cdots & 0 & 0 \\ 
\vdots & \ddots & \vdots &  \vdots \\ 0& \cdots & 0 & 0 \\ 
\tilde{a}_{k, 1} & \cdots & \tilde{a}_{k, k-1} & 0 \end{matrix}\right] \\  \notag
&& + \left[\begin{matrix} 0 & \cdots & 0 & 0 \\ 
\vdots & \ddots & \vdots &  \vdots \\ 0 & \cdots & 0 & 0 \\ 
0 & \cdots & 0 & \tilde{a}_{k, k} \end{matrix}\right] + \left[\begin{matrix} 0 & \cdots & 0 & \tilde{a}_{1, k} \\ 
\vdots & \ddots & \vdots &  \vdots \\ 0 & \cdots & 0 & \tilde{a}_{k-1, k} \\ 
0 & \cdots & 0 & 0 \end{matrix}\right]. 
\end{eqnarray}

The interpretations of the four matrices are analogous to a uni-stretch of a triangle. 
\begin{enumerate}
\item The first symmetric matrix is the usual Cauchy stress tensor inside the boundary face $s$. We may view $s$ as an elastic surface (with a fixed thickness) of dimension $k-1$ in the tangent space spanned by $s$, calling it $\mathbb{R}^{k-1}$. Then at each interior point $a$, if $\vec{u} \in \mathbb{R}^{k-1}$ is a unit vector, then multiplying $\vec{u}$ with the first matrix gives the stress (force per unit area) at $a$ in the orthogonal plane $\{x \in \mathbb{R}^{k-1}: (x-a) \cdot \vec{u} = 0\}$ exerted by the half space $\{x \in \mathbb{R}^{k-1}: (x-a) \cdot \vec{u} < 0\}$. For instance $\vec{u} = \vec{E}_1$, then the stress is equal to (up to thickness of $s$)
\begin{equation*}
\left[\begin{matrix} \tilde{a}_{1,1} & \cdots & \tilde{a}_{1, k-1} & 0 \\
\vdots & \ddots & \vdots &  \vdots \\ \tilde{a}_{k-1, 1} & \cdots & \tilde{a}_{k-1, k-1} & 0 \\
0 & \cdots & 0 & 0 \end{matrix}\right] \vec{u} = \tilde{a}_{1, 1}\vec{E}_1 + \cdots + \tilde{a}_{k-1, 1}\vec{E}_{k-1}.
\end{equation*}
Since the stress is contained inside the tangent space, we will use the same definition as in $\mathbb{R}^2$ or $\mathbb{R}^3$ to call this {\it normal stress}.

\item The second matrix in the first row is the shear stress for face $s$. With the same setup as above,  if we multiply the unit vector $\vec{u}$ with the second matrix, the result is a perpendicular vector to the face. Hence, if we view $s$ as an elastic body, then the {\it shear stress} at $a$ on the orthogonal plane $\{x \in \mathbb{R}^{k-1}: (x-a) \cdot \vec{u} = 0\}$ due to $\{x \in \mathbb{R}^{k-1}: (x-a) \cdot \vec{u} < 0\}$ is equal to the product. For example if $\vec{u} = u_1\vec{E}_1 + \cdots + u_{k-1}\vec{E}_{k-1}$, the shear stress is equal to (up to thickness of $s$)
\begin{equation*}
\left[\begin{matrix} 0 & \cdots & 0 & 0 \\
\vdots & \ddots & \vdots &  \vdots \\ 0& \cdots & 0 & 0 \\
\tilde{a}_{k, 1} & \cdots & \tilde{a}_{k, k-1} & 0 \end{matrix}\right]\vec{u} = (\tilde{a}_{k, 1}u_1+ \cdots + \tilde{a}_{k, k-1}u_{k-1})\hat{n}.
\end{equation*}

\item The third matrix in the second row tells us that the original $k$-simplex $\sigma$ is pushing or pulling its face $s$ in the orthogonal direction. This external {\it orthogonal force} $\vec{F}_1$ can be calculated using 
\begin{eqnarray*}
\vec{F}_1 &=& - \tilde{a}_{k, k} (\text{surface area of $s$})\hat{n} \\
 &=& -\tilde{a}_{k, k} \sqrt{\text{Gram}(a_1-a_0, \ldots, \widehat{a_i - a_0}, \ldots, a_k-a_0)} \  \hat{n}.
\end{eqnarray*}

\item The fourth matrix indicates that the $k$-simplex $\sigma$ exerts an external force on the face $s$ parallel to the tangent space of $s$. And this external {\it parallel force} $\vec{F}_2$ is equal to 
\begin{eqnarray*}
\vec{F}_2 &=& -(\tilde{a}_{1, k}\vec{E}_1 + \cdots + \tilde{a}_{k-1, k} \vec{E}_{k-1})(\text{surface area of $s$})\\
&=& -(\tilde{a}_{1, k}\vec{E}_1 + \cdots + \tilde{a}_{k-1, k} \vec{E}_{k-1})\sqrt{\text{Gram}(a_1-a_0, \ldots, \widehat{a_i - a_0}, \ldots, a_k-a_0)}.
\end{eqnarray*}
\end{enumerate}

Denote the four matrices above in $\tilde{A}$ as $\tilde{A}_1, \tilde{A}_2, \tilde{A}_3, \tilde{A}_4$, respectively. Then we may write $A \otimes s$ as 
\begin{eqnarray*}
A \otimes s  &=& O \tilde{A}_1O^T  \otimes s + O \tilde{A}_2O^T  \otimes s + O \tilde{A}_3O^T  \otimes s + O \tilde{A}_4O^T  \otimes s \\
&=& O(\tilde{A}_1 + \tilde{A}_2)O^T \otimes s+ O(\tilde{A}_3 + \tilde{A}_4)O^T \otimes s.
\end{eqnarray*} 
The first term gives only information about the internal stresses within the face $s$ and the second term gives only information about the external forces on the face $s$ from the $k$-simplex $\sigma$,
each of which corresponds to the $S$ and $F$ in (\ref{ycc:thm1:eqn1}), thus finishing the proof of existence. The uniqueness is easy to see.
\end{proof}

\subsection{Generalized Cauchy Stress Tensor}
The coefficient $n \times n$ symmetric matrix for lower dimensional $k$-chains, $k < n$, carries not only information about stresses (compress, stretch, shearing), but also information about external forces (being pulled or pushed). It extends the usual Cauchy stress tensor. The theorem leads to the following definition of a generalized Cauchy stress tensor. 

Let a stressed polyhedral $(k-1)$-simplex $A \otimes \tau$ be given where $A$ is an $n \times n$ real  symmetric matrix and $\tau$ is an oriented $(k-1)$-simplex. Suppose that the orthonormal eigenvectors $\vec{v}_i$ and their corresponding eigenvalues $\mu_i$ of $A$ satisfy the two conditions:
\begin{itemize}
\item[(i)] One can reorder the $\vec{v}_i$ so that $\mu_i \neq 0$ \text{ for } $i \leq k$ and $\mu_i = 0 \text{ for } i > k$;
\item[(ii)] Moreover, the first $k$ eigenvectors $\vec{v}_1, \ldots, \vec{v}_{k}$ span a $k$-dimensional subspace that contains the tangent space of $\tau$. 
\end{itemize}
Then we say that $A$ is a {\it generalized Cauchy stress tensor} for $\tau$. In the special case that $\vec{v}_1, \ldots, \vec{v}_{k-1}$ span the tangent space $\tau$, $\sum_{i=1}^{k-1} D_{\mu_iv_i} \otimes \tau$ represents the normal stress in $\tau$ and $D_{\mu_kv_k}$  represents the orthogonal forces on $\tau$.

\section{The Converse to Decomposition Theorem}

The next natural question is whether the converse of the theorem also holds. In other words, suppose $Q$ is a stressed polyhedral $(k-1)$-chain, we look for necessary and sufficient conditions for which there exists a structurally stressed polyhedral $k$-chain $P$ such that $\partial P = Q$. 

Physically, one can imagine $Q$ as finitely many 
$(k-1)$-dimensional polyhedrons made of elastic materials in $\mathbb{R}^n$.  Suppose that each polyhedron in $Q$ also is under some pressure, then we ask the question: ``Does there exist a finite number of elastic $k$-dimensional polyhedrons $P$ such that 
\begin{itemize}
\item[(ii)] The boundary forces of $P$ match the external forces on $Q$;
\item[(iii)] The stresses on the boundary of $P$  also match those in $Q$?"
\end{itemize}

The answer is yes! In the next two subsections, we will prove this result.

\subsection{Balancing External Forces}
Given a system $\mathbf{F}$ of forces in $\mathbb{R}^n$ whose support consists of finitely many $(k-1)$-simplices. If $n \geq k+1$, we are going to show that if the system $\mathbf{F}$ is in equilibrium,
there exists a $k$-dimensional stressed chain such that its external forces on the boundary exactly balance the system $\mathbf{F}$. The proof is technical. It consists of three lemmas for radial cases and three propositions for non-radial cases.

\subsubsection{Set Up}
Recall that if a system $\mathbf{F} = \sum_{i=1}^l \mathbf{F}_i \delta_{M_i}$ of pointed forces is in equilibrium, there is a finite number of springs $\sum_{i, j=1}^{l'} \lambda_{ij} [a_i, a_j]$ such that their boundary forces match $\mathbf{F}$. More precisely, one defines
\[Beam([a_i, a_j]) = (\delta_{a_j} - \delta_{a_i}) \frac{a_j - a_i}{\vert a_j -a_i \vert},\]
then $\lambda_{i, j} > 0$ for stretched springs and $\lambda_{i, j} < 0$ for compressed ones. It follows that
\begin{equation*}
\mathbf{F} = \displaystyle\sum_{i, j=1}^{l'} \lambda_{ij} Beam([a_i, a_j]).
\end{equation*}

In general the equilibrium conditions for a system of forces in the form of (\ref{ycc:asystemofforcesk-1}) can be written as:
\begin{equation}\label{ycc:equilibriumk-1}
\displaystyle\sum_{i=1}^l \mathbf{F}_i = 0, \ \ \displaystyle\sum_{i=1}^l \mathbf{F}_i \wedge \hat{s_i}=0, 
\end{equation}
where $\hat{s_i} = (a_0^i + \cdots + a_{k-1}^i)/k$ is the barycenter of the simplex $s_i = [a_0^i , \ldots , a_{k-1}^i]$.

When we generalize the Beam to higher dimension, one has to consider orientation. Vectors in $\mathbb{R}^n$ can be compared with each other lexicographically. For example, 
\[(1, 2, 5) < (1, 3, -4),\]
because the second coordinates satisfy $2< 3$. Thus given a $k$-simplex $[a_0, \ldots, a_k]$ in $\mathbb{R}^n$, the orientation is chosen so that
\[a_0 < a_1 < \cdots < a_k.\]
Suppose its Cauchy stress tensor $A$ is an $n \times n$ matrix whose nonzero eigenvalues are $\mu_1, \ldots, \mu_k$ corresponding to orthonormal eigenvectors $v_1, \ldots, v_k$. We assume that $v_i > 0$ for each $1 \leq i \leq k$. 

\subsubsection{Boundary Forces}
We are going to show that the forces on the boundary of $[a_0, a_1, \ldots, a_k]$ due to the Cauchy stress tensor $A = D_{\mu_1v_1}+\cdots D_{\mu_kv_k}$ is equal to

\begin{eqnarray}\label{ycc:kbeam}
& & A \otimes Beam[a_0, \ldots, a_k] \nonumber \\ 
&=&\frac{1}{(\text{volume of $\sigma$})}  \displaystyle \sum_{i=0}^k (-1)^i \displaystyle \sum_{j=1}^k \mu_j v_j \left[ (\zeta \ \llcorner \ d\eta_i) \bullet v_j \right] \ d\mathcal{H}^{k-1}_{\vert [a_0, \ldots, \hat{a_i}, \ldots, a_k]}. 
\end{eqnarray}
Here $\llcorner: \bigwedge_k \mathbb{R}^n \times \bigwedge^{k-1} \mathbb{R}^n \rightarrow \bigwedge_1 \mathbb{R}^n$ is the left interior multiplication with $\zeta$ and $d\eta_i$ defined as follows:
\begin{eqnarray*}
\zeta &=& (a_1 - a_0) \wedge \cdots \wedge (a_k-a_0), \\
d\eta_i &=& d(a_1-a_0) \wedge  \cdots \wedge \widehat{d(a_i - a_0)} \wedge \cdots \wedge d(a_k - a_0) \text{ for } i > 0,  \\
d\eta_0 &=& d(a_2-a_1) \wedge \cdots \wedge d(a_k-a_1),
\end{eqnarray*}
where $d(w)$ represents the dual vector to $w \in \mathbb{R}^n$ in the dual space $(\mathbb{R}^{n})^{\ast}$. 
Note that when $n=k$, the left interior multiplication $\llcorner$ is the same as the Hodge star $\ast$.

\begin{lemma}
Given the above notations, first $(-1)^i(\zeta \  \llcorner  \ d\eta_i)$ gives the correct normal vector to the $i$th face $s_i = [a_0, \ldots, \hat{a_i}, \ldots, a_k]$ in $\mathbb{R}^n$.  Second
\[\frac{\mu_j v_j [(-1)^i(\zeta \  \llcorner  \ d\eta_i)] \bullet v_j}{\text{volume of $\sigma$}}\]
 is  equal to the force exerting on $s_i$ due to the stress in the direction of the $j$th eigenvector $v_j$ with eigenvalue $\mu_j$, i.e., the matrix $D_{\mu_j v_j} = \mu_j (v_j \otimes v_j)$.
\end{lemma}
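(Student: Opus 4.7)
The plan is to prove the two claims separately, with the first one carrying most of the geometric work. For the first claim, I would show that $\zeta \llcorner d\eta_i$ already lies in the tangent space of $\sigma = [a_0,\dots,a_k]$ (being a contraction of the decomposable $k$-vector $\zeta = w_1 \wedge \cdots \wedge w_k$ with $w_l = a_l - a_0$), and that it is orthogonal to the tangent space of the face $s_i$. The key tool is the adjoint characterization of the left interior product: for every $u \in \mathbb{R}^n$,
\[
(\zeta \llcorner d\eta_i) \bullet u \;=\; \langle \zeta,\, d\eta_i \wedge d(u)\rangle.
\]
When $u$ lies in the tangent space of $s_i$, one can expand $d(u)$ as a linear combination of the $d(w_l)$'s already appearing in $d\eta_i$, so the wedge $d\eta_i \wedge d(u)$ vanishes by antisymmetry and the pairing is zero. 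Since $\zeta \llcorner d\eta_i$ is simultaneously in the tangent space of $\sigma$ and orthogonal to that of $s_i$, it must be a scalar multiple of the unit normal $\hat{n}_i$ to $s_i$ inside the tangent plane of $\sigma$. The case $i=0$ is analogous once one takes $a_1$ as the base vertex, matching the alternative definition of $d\eta_0$.

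Next I would pin down the magnitude and sign. Testing against the remaining edge $w_i$ via the same adjoint identity gives $(\zeta \llcorner d\eta_i) \bullet w_i = \pm\langle \zeta,\, dw_1 \wedge \cdots \wedge dw_k\rangle = \pm \det G$, where $G = (w_p \bullet w_q)$ is the Gram matrix of $\{w_1,\dots,w_k\}$. Writing $\zeta \llcorner d\eta_i = c\,\hat{n}_i$ and using $\hat{n}_i \bullet w_i = \pm h_i$, where $h_i$ is the distance from $a_i$ to $s_i$, I can solve for $c$ and express it via $\sqrt{\det G \cdot \det G_i}$, where $G_i$ is the principal minor obtained by deleting row and column $i$. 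The standard identities expressing $\text{vol}(\sigma)$ and $\text{area}(s_i)$ in terms of these Gram determinants then convert $\zeta \llcorner d\eta_i / \text{vol}(\sigma)$ into $\pm \hat{n}_i$ up to the factorial constants built into the Beam normalization. For the outward direction, the $(-1)^i$ factor is reconciled by checking that $(w_1,\dots,\widehat{w_i},\dots,w_k,\hat{n}_i)$ has the same orientation as $(w_1,\dots,w_k)$; this is precisely the orientation already set up in the proof of Theorem \ref{ycc:thm1} through the positive-determinant transition matrix $B$.

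With the identification of $(-1)^i (\zeta \llcorner d\eta_i)/\text{vol}(\sigma)$ as the outward unit normal $\hat{n}_i$ in hand, the second claim follows directly from the Cauchy stress convention recalled in the paper: for a hyperplane with outward unit normal $\hat{n}$, the force per unit area exerted by the half-space $\{x : (x-a)\bullet \hat{n} < 0\}$ equals $A\hat{n}$. Specialized to $A = D_{\mu_j v_j} = \mu_j(v_j \otimes v_j)$, this traction is $\mu_j(v_j \bullet \hat{n}_i)\,v_j$. Substituting the expression from the first claim for $\hat{n}_i$ reproduces the integrand $\mu_j v_j[(-1)^i(\zeta \llcorner d\eta_i) \bullet v_j]/\text{vol}(\sigma)$ appearing in formula \eqref{ycc:kbeam}.

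The hard part, and the principal obstacle, is the simultaneous sign and magnitude bookkeeping. Three independent conventions must be reconciled: the $(-1)^i$ in $\partial\sigma$, the sign arising from the Leibniz reordering of $d\eta_i \wedge dw_i$, and the choice of outward direction for $\hat{n}_i$. In parallel, the magnitude computation depends on whether $\text{vol}(\sigma)$ is taken to be the simplicial or the parallelepiped volume, and on the Beam normalization (compare the one-dimensional formula $\text{Beam}([a,b]) = (\delta_b - \delta_a)(b-a)/|b-a|$, whose coefficient is a unit vector, suggesting that the factorials from $\zeta$ and $d\eta_i$ ultimately cancel against $1/\text{vol}(\sigma)$). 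Finally, the case $i=0$ requires a separate but parallel computation because $d\eta_0$ is built from $\{a_l - a_1\}_{l \geq 2}$, so the reference vertex must be shifted before applying the argument above. Once these sign and factorial issues are resolved, the identification of the integrand with the physical force density on $s_i$ is essentially tautological.
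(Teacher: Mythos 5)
Your proposal is correct and follows essentially the same route as the paper: both use the adjoint identity $d\beta(\zeta \llcorner d\eta_i) = (d\eta_i \wedge d\beta)(\zeta)$ to show the contraction lies in the tangent space of $\sigma$ and is orthogonal to $s_i$, both extract the magnitude from Gram determinants (the paper pairs $\zeta \llcorner d\eta_i$ with itself and separately computes $(\zeta\llcorner d\eta_i)\cdot(a_i-a_0) = (\mathrm{vol}\,\sigma)^2$, which is exactly your pairing against $w_i$), and both deduce the second claim from the Cauchy traction convention applied to the outward unit normal. Your height-based bookkeeping for $c$ is a minor variant of the paper's self-pairing computation, and your candid flagging of the sign/orientation checks matches the paper, which likewise leaves "one can check that $(-1)^i\zeta\llcorner d\eta_i$ points outward" to the reader.
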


\begin{proof}
Let $\sigma = [a_0, \ldots, a_k]$ and assume that $a_0$ is at the origin. First let's show that $\zeta \ \llcorner \ d\eta_i$ lies in the tangent space of $\sigma$. 
Let $V$ be the span of $a_1, \ldots, a_k$. For any $\beta \in V^{\perp}$, 
\begin{eqnarray*}
d\beta(\zeta \ \llcorner \ d\eta_i) &=& (d\eta_i \wedge d\beta) (\zeta) \\
&=& \text{det} 
\left[ \begin{matrix} (a_1 - a_0) \cdot (a_1 -a_0) & \cdots & (a_1-a_0) \cdot (a_k - a_0)\\
\vdots & & \vdots \\
\beta \cdot (a_1 -a_0) & \cdots & \beta \cdot (a_k - a_0)\\
 \end{matrix} \right] = 0.
\end{eqnarray*}
So $\zeta \ \llcorner \ d\eta_i \in V$, as desired. 

Next we want to show that $\zeta \ \llcorner \ d\eta_i$ is perpendicular to the $i$th face $s_i = [a_0,\ldots, \hat{a_i}, \ldots, a_k]$. If $i > 0$, for any $a_j - a_0$ with $j \neq i$, one has 
\begin{equation*}
d\eta_i \wedge d(a_j - a_0) = 0 \Rightarrow d(a_j - a_0)(\zeta \ \llcorner \ d\eta_i) =0.
\end{equation*}
For $i = 0$, the proof is similar. 

Then one can check that $(-1)^{i}\zeta \ \llcorner \ d\eta_i$ points outward. 

Lastly we look for the norm of $\zeta \ \llcorner \ d\eta_i$: 
\begin{eqnarray*}
&& \vert \zeta \ \llcorner \ d\eta_i \vert^2 \\
&=& d(\zeta \ \llcorner \ d\eta_i)(\zeta \ \llcorner \ d\eta_i) = d\eta_i \wedge d(\zeta \ \llcorner \ d\eta_i)(\zeta)\\
&=&(-1)^{k-i}  \text{det} 
\left[ \begin{matrix} (a_1 - a_0) \cdot (a_1 -a_0) & \cdots & (a_1-a_0) \cdot (a_i - a_0) & \cdots & (a_1 - a_0) \cdot (a_k-a_0)\\
\vdots & & & \vdots \\
(\zeta \ \llcorner \ d\eta_i) \cdot (a_1-a_0) &\cdots & (\zeta \ \llcorner \ d\eta_i) \cdot (a_i-a_0) & \cdots & (\zeta \ \llcorner \ d\eta_i) \cdot (a_k-a_0) \\
\vdots & & & \vdots \\
(a_k - a_0) \cdot (a_1 -a_0) & \cdots & (a_k-a_0) \cdot (a_i - a_0) & \cdots & (a_k - a_0) \cdot (a_k-a_0)
 \end{matrix} \right]\\
&=& (-1)^{k-i}  (\zeta \ \llcorner \ d\eta_i) \cdot (a_i-a_0) (\text{area of $s_i$} )^2
\end{eqnarray*}
Therefore
\begin{equation*}
 (\zeta \ \llcorner \ d\eta_i) \cdot (a_i-a_0) = \frac{(-1)^{k-i}\vert \zeta \ \llcorner \ d\eta_i\vert^2 }{(\text{area of $s_i$} )^2}.
\end{equation*}
On the other hand, 
\begin{eqnarray*}
&&  (\zeta \ \llcorner \ d\eta_i) \cdot (a_i-a_0)\\
&=& d(a_i - a_0)(\zeta \ \llcorner \ d\eta_i) = d\eta_i \wedge d(a_i - a_0)(\zeta)\\
&=&(-1)^{k-i} \text{det} 
\left[ \begin{matrix} (a_1 - a_0)\cdot (a_1 -a_0) & \cdots & (a_1-a_0)\cdot (a_k - a_0)\\
\vdots & & \vdots \\
(a_k - a_0) \cdot (a_1 -a_0) & \cdots & (a_k -a_0) \cdot (a_k - a_0)\\
 \end{matrix} \right] \\
 &=& (\text{volume of $\sigma$})^2, 
\end{eqnarray*}
where $\sigma = [a_0, \ldots, a_k]$. Thus
\begin{equation*}
\vert \zeta \ \llcorner \ d\eta_i \vert = (\text{area of $s_i$} )(\text{volume of $\sigma$}).
\end{equation*}
It follows that the force on $s_i$ coming from the $j$th Cauchy stress tensor $D_{\mu_j v_j}$ is
\begin{equation*}
\left [(-1)^i \frac{\zeta \ \llcorner \ d\eta_i }{\vert \zeta \ \llcorner \ d\eta_i \vert} \bullet v_j \right]  \mu_j v_j (\text{area of $s_i$}) 
= \frac{1}{\text{volume of $\sigma$}}(-1)^i \mu_j v_j [(\zeta \ \llcorner \ d\eta_i ) \bullet v_j].
\end{equation*}
\end{proof}

\subsubsection{Radial} 
The crux for proving the existence is to consider the special case when every simplex in the stressed polyhedral $(k-1)$-chain $Q$ contains the origin $O$. The idea is proof by induction and constructing cones. It is sufficient to understand the first lemma, since the last two are generalizations to higher dimensions.

\begin{lemma}\label{ycc:lemk=2}
Suppose $k=2$.  
Assume that 
\begin{equation}\label{ycc:F1O}
\mathbf{F} = \displaystyle\sum_{i=1}^l \mathbf{F}_i d\mathcal{H}^1_{\vert [O, a_i]},
\end{equation}
where each 1-simplex $[O, a_i]$ emanates from the origin $O$ to some $a_i \in \mathbb{R}^{n}$. Given that $\mathbf{F}$ satisfies the equilibrium condition in (\ref{ycc:equilibriumk-1}), then $\mathbf{F}$ is a linear combination of finitely many 2-beams.
\end{lemma}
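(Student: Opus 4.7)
I would prove the lemma by induction on $l$, the number of 1-simplices emanating from $O$, using a \emph{cone construction} as the reduction step. The main tools are the explicit formula (\ref{ycc:kbeam}) for the external forces at the boundary of a 2-beam, together with Theorem \ref{ycc:thm1} to identify which stress tensors produce zero external force on a given edge.

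For the base case $l=2$, the equilibrium conditions $\mathbf{F}_1 + \mathbf{F}_2 = 0$ and $\mathbf{F}_1 \wedge a_1 + \mathbf{F}_2 \wedge a_2 = 0$ together imply that $\mathbf{F}_1$ is a scalar multiple of $a_2 - a_1$, so a single 2-beam on the triangle $[O, a_1, a_2]$ with a suitably chosen rank-1 stress tensor whose eigenvector lies along $a_2 - a_1$ balances both forces at once. (In the degenerate case that $O, a_1, a_2$ are collinear, one introduces an auxiliary vertex and uses two 2-beams whose outer-edge contributions cancel.)

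For the inductive step, I assume the statement for all equilibrium systems on fewer than $l$ rays. Given $\mathbf{F}$ with $l$ rays, I want to construct a finite sum $P = \sum_{j < l} A_{lj} \otimes [O, a_l, a_j]$ of stressed 2-simplices whose combined boundary external forces (a) equal $\mathbf{F}_l\, d\mathcal{H}^1$ on $[O, a_l]$, (b) vanish on every outer edge $[a_l, a_j]$ (since $\mathbf{F}$ is not supported there), and (c) modify the forces on $[O, a_j]$, $j < l$, by some vectors. Because $\partial P$ is itself a balanced system of forces, the residual $\mathbf{F} - \partial P$ remains in equilibrium and is now supported on only $l-1$ rays, so the inductive hypothesis finishes the argument. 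Condition (b) is where Theorem \ref{ycc:thm1} enters: forcing the external force on $[a_l, a_j]$ to vanish constrains $A_{lj}$ to be the rank-1 tensor $\mu_{lj}\, \hat t_{lj} \otimes \hat t_{lj}$, where $\hat t_{lj} = (a_j - a_l)/|a_j - a_l|$. Substituting this form into (\ref{ycc:kbeam}) shows the 2-beam contributes a vector parallel to $\hat t_{lj}$ on the edge $[O, a_l]$, so condition (a) becomes a linear system
\[
\sum_{j<l} \mu_{lj}\, c_{lj}\, \hat t_{lj} \;=\; \mathbf{F}_l
\]
for explicit geometric scalars $c_{lj}$.

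The \textbf{main obstacle} is the solvability of this linear system: a priori $\mathbf{F}_l$ need not lie in the span of $\{\hat t_{lj} : j<l\}$, which is just the affine hull of $\{a_1, \dots, a_l\}$ translated to $a_l$. I expect to resolve this either (i) by exploiting the global equilibrium relations $\sum \mathbf{F}_i = 0$ and $\sum \mathbf{F}_i \wedge a_i = 0$ to show that any component of $\mathbf{F}_l$ transverse to this affine hull is forced by the other equilibrium constraints to vanish, or (ii) by enlarging the family of triangles: introducing an auxiliary vertex $b \in \mathbb{R}^n$ supplies additional directions $\hat t_{lb}$, and pairing a 2-beam on $[O, a_l, b]$ with further 2-beams on triangles such as $[O, a_j, b]$ or $[a_l, a_j, b]$ (again made rank-1 along the outer edges) can absorb any leftover component while cancelling on the new auxiliary edges. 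The bookkeeping of this cancellation, arranged so that only edges $[O, a_j]$ survive, is where the technical bulk of the argument will lie; the inductive structure then closes on the $(l-1)$-ray residual system.
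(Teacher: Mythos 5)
There is a genuine gap, and it sits exactly where you flagged it: the solvability of the linear system $\sum_{j<l}\mu_{lj}c_{lj}\hat t_{lj}=\mathbf{F}_l$. Your proposed resolution (i) is false: the global equilibrium conditions do \emph{not} force $\mathbf{F}_l$ into the span of $\{a_j-a_l\}$. Take $n=3$, $l=3$, $a_1=(1,0,0)$, $a_2=(0,1,0)$, $a_3=(\tfrac12,\tfrac12,0)$, with $\mathbf{F}_1=\mathbf{F}_2=e_3$ and $\mathbf{F}_3=-2e_3$. The net force vanishes, and the net torque $\tfrac12 e_3\wedge(a_1+a_2-2a_3)$ vanishes as well, yet every $\mathbf{F}_l$ is orthogonal to the plane containing all the $a_j$, so no combination of triangles spanned by the given vertices with tangential rank-1 tensors can produce it. Resolution (ii), introducing auxiliary vertices, is the right idea, but it is precisely the hard content of the lemma and you leave it entirely unexecuted; as written, the induction does not close.

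The paper avoids re-deriving this machinery by reducing to the already-established one-dimensional case. Each ray $[O,a_i]$ is collapsed to a point mass $\mathbf{F}_i\delta_{\hat a_i}$ at its barycenter $\hat a_i=a_i/2$; this pointed system is still in equilibrium because the barycenter is exactly what appears in the torque condition (\ref{ycc:equilibriumk-1}). Gangbo's theorem for pointed forces then decomposes it into springs $\sum\lambda_{ij}\,Beam([\hat a_i,\hat a_j])$, and crucially that theorem already supplies the extra points of application $\hat a_{l+1},\dots,\hat a_{\tilde l}$ that your auxiliary vertices were meant to provide. Each spring $[\hat a_i,\hat a_j]$ is then coned back up: extend the ray from $O$ through each $\hat a_i$ so that $\hat a_i$ is the midpoint of $[O,a_i]$, and place on the triangle $[O,a_i,a_j]$ the rank-1 tensor along $(a_j-a_i)/|a_j-a_i|$, which exerts the prescribed constant forces on $[O,a_i]$ and $[O,a_j]$ and nothing on the outer edge. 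If you want to salvage your induction, you would essentially have to reprove the pointed-force decomposition inside your inductive step; it is cleaner to quote it as the paper does.
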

\begin{proof}
For each $[O, a_i]$, denote its barycenter by $\hat{a_i}$ which is equal to $a_i/2$. Then using these centers together with the forces, one can decompose them into a linear combination of 1-beams. See Figure \ref{fig:cone}. More precisely, let $\tilde{\mathbf{F}}$ be defined as:
\begin{equation*}
\tilde{\mathbf{F}} = \displaystyle\sum_{i=1}^l \mathbf{F}_i \delta_{\hat{a_i}},
\end{equation*}
which is also in equilibrium by (\ref{ycc:equilibriumk-1}). Therefore there exist $\tilde{l} \geq l$ points of application $\hat{a_1}, \ldots, \widehat{a_{\tilde{l}}}$ in $\mathbb{R}^{n}$ and an $\tilde{l} \times \tilde{l}$ symmetric matrix $\{\lambda_{ij}\}_{i, j = 1}^{\tilde{l}}$ of null diagonal such that 
\begin{equation*}
\tilde{\mathbf{F}} = \displaystyle\sum_{1 \leq i < j \leq \tilde{l}} \lambda_{ij} Beam([\hat{a_i}, \hat{a_j}]).
\end{equation*}
Note that the centers $\{\hat{a_1}, \ldots, \hat{a_l}\}$ are included in the set $\{\hat{a_1}, \ldots, \widehat{a_{\tilde{l}}}\}$. Moreover, one may change the point of reference in the proof of Gangbo to ensure that the origin $O$ is not any point of application. So we can connect the origin $O$ with each $\hat{a_i}$ and extend the ray to a new point $a_i$, so that $\hat{a_i}$ is the center of the 1-simplex $[O, a_i]$. It follows that the $[\hat{a_i}, \hat{a_j}]$ is not only parallel to $[a_i, a_j]$ but also lies in the tangent space of $[O, a_i, a_j]$. So there is a Cauchy stress tensor $A_{ij} = D_{\mu_{ij} v_{ij}}$ such that 
\begin{eqnarray*}
&&A_{ij} \otimes Beam([O, a_i, a_j])  \\
&=& \lambda_{ij}\frac{a_j - a_i}{\vert a_j - a_i\vert} d\mathcal{H}^1_{\vert [O, a_j]} - \lambda_{ij}\frac{a_j - a_i}{\vert a_j - a_i\vert} d\mathcal{H}^1_{\vert [O, a_i]} + 0 \ d\mathcal{H}^1_{\vert [a_i, a_j]}, 
\end{eqnarray*}
where 
\begin{equation*}
v_{ij} = \frac{a_j - a_i}{\vert a_j - a_i\vert}, \ \ \mu_{ij} = -\lambda_{ij} \cdot \frac{\vert a_j - a_i\vert}{\text{area of $[O, a_i, a_j]$}}.
\end{equation*}
Here we assume that $O < a_i < a_j$, otherwise we need to adjust the sign of $A_{ij}$.

Finally let's verify that 
\begin{equation*}
\mathbf{F} = \displaystyle\sum_{1 \leq i < j \leq \tilde{l}} A_{ij} \otimes Beam([O, a_i, a_j]). 
\end{equation*}
For each $[O, a_i]$, the total force due to the 2-beams is equal to the coefficent of $\hat{a_i}$ in $\tilde{\mathbf{F}}$, which is $\mathbf{F}_i$ when $i \leq l$ and zero when $i>l$. For each $[a_i, a_j]$, there is no force on it and its coefficient in $\tilde{\mathbf{F}}$ is also zero because there is no term for $\delta_{\frac{a_i + a_j}{2}}$. Thus we obtain (\ref{ycc:F1O}) as desired. 
\end{proof}

\begin{figure}
    \centering
    \includegraphics[width=0.5\linewidth]{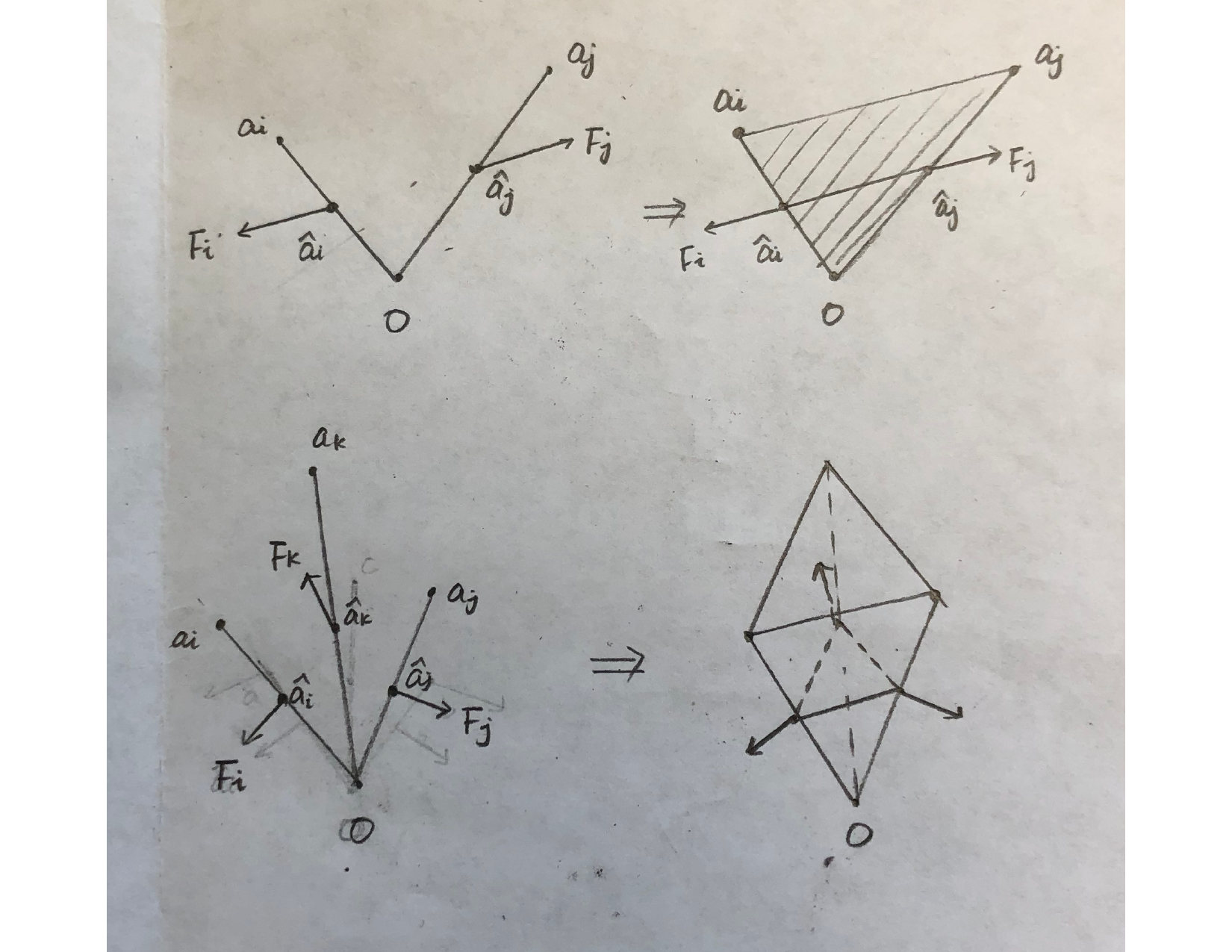}
    \caption{Make cones for radial polyhedral chains.}
    \label{fig:cone}
\end{figure}

\begin{lemma}\label{ycc:lemk=3}
Suppose $k=3$.  Assume that for any system of $1$-simplices and forces in $\mathbb{R}^n$ with $n \geq 3$, it can be decomposed into a finite combination of $2$-beams. 
Given that 
\begin{equation}\label{ycc:F2O}
\mathbf{F} = \displaystyle\sum_{i=1}^l \mathbf{F}_i d\mathcal{H}^2_{\vert [O, a_i, b_i]},
\end{equation}
where each 2-simplex $[O, a_i, b_i]$ emanates from the origin $O$ to some interval $[a_i, b_i]$. Given that $\mathbf{F}$ satisfies the equilibrium condition in (\ref{ycc:equilibriumk-1}), then $\mathbf{F}$ is a linear combination of finitely many 3-beams.
\end{lemma}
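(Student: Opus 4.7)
The plan is to imitate Lemma \ref{ycc:lemk=2} one dimension up: reduce the radial 2-simplex system $\mathbf{F}$ to an equivalent 1-simplex system $\tilde{\mathbf{F}}$, invoke the inductive hypothesis to get a 2-beam decomposition of $\tilde{\mathbf{F}}$, and lift each 2-beam to a 3-beam by coning with apex $O$.

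Set $\hat{a_i} = a_i/2$, $\hat{b_i} = b_i/2$ and define
\[\tilde{\mathbf{F}} = \sum_{i=1}^l \mathbf{F}_i \, d\mathcal{H}^1_{[\hat{a_i}, \hat{b_i}]}.\]
Each 1-simplex $[\hat{a_i}, \hat{b_i}]$ is the midsegment of the 2-simplex $[O, a_i, b_i]$ parallel to its far edge $[a_i, b_i]$. Its barycenter is $(a_i+b_i)/4 = \frac{3}{4}\hat{s_i}$, a fixed scalar multiple of the original barycenter, so the equilibrium conditions (\ref{ycc:equilibriumk-1}) on $\mathbf{F}$ pass directly to $\tilde{\mathbf{F}}$. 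By the inductive hypothesis,
\[\tilde{\mathbf{F}} = \sum_m \tilde{A}_m \otimes Beam(\tilde{\sigma}_m), \qquad \tilde{\sigma}_m = [\tilde{p}_m, \tilde{q}_m, \tilde{r}_m].\]
Any edge of some $\tilde{\sigma}_m$ that is not one of the original midsegments behaves as a \emph{Steiner} 1-simplex on which the sum of 2-beam contributions is automatically zero. As in Lemma \ref{ycc:lemk=2}, we may also arrange that no $\tilde{\sigma}_m$ passes through $O$, so each cone $\sigma_m$ below is non-degenerate.

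For each $m$ form the 3-simplex $\sigma_m = [O, 2\tilde{p}_m, 2\tilde{q}_m, 2\tilde{r}_m]$. Then $\tilde{\sigma}_m$ sits inside $\sigma_m$ as the midface parallel to the far face $[2\tilde{p}_m, 2\tilde{q}_m, 2\tilde{r}_m]$, and each edge of $\tilde{\sigma}_m$ is the midsegment of exactly one of the three side faces $[O, 2\tilde{p}_m, 2\tilde{q}_m]$, $[O, 2\tilde{p}_m, 2\tilde{r}_m]$, $[O, 2\tilde{q}_m, 2\tilde{r}_m]$ emanating from $O$. Choose a Cauchy stress tensor $A_m$ with the same eigenvectors as $\tilde{A}_m$ (so every eigenvector remains in the common plane of $\tilde{\sigma}_m$ and the far face) but with eigenvalues rescaled by a factor $c_m$ that depends on the area of $\tilde{\sigma}_m$ and the volume of $\sigma_m$. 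Because the outward normal to the far face of $\sigma_m$ is orthogonal to every eigenvector of $A_m$, the 3-beam $A_m \otimes Beam(\sigma_m)$ exerts zero force on the far face---the direct analogue of the vanishing force on $[a_i, a_j]$ in Lemma \ref{ycc:lemk=2}. The surviving boundary force is spread over the three side faces of $\sigma_m$.

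Summing over $m$, I would verify face by face that $\sum_m A_m \otimes Beam(\sigma_m) = \mathbf{F}$. Each original face $[O, a_i, b_i]$ collects contributions from exactly those $m$ for which $[\hat{a_i}, \hat{b_i}]$ is an edge of $\tilde{\sigma}_m$, and these reproduce $\mathbf{F}_i \, d\mathcal{H}^2_{[O, a_i, b_i]}$; Steiner side faces inherit zero net force from the Steiner edges. The main obstacle is pinning down the scaling $c_m$ so that, \emph{simultaneously} for all three side faces of $\sigma_m$, the 3-beam's boundary force per unit area matches the 2-beam's boundary force per unit length on the corresponding midsegment. This is the higher-dimensional counterpart of the scalar identity $\mu_{ij} = -\lambda_{ij}|a_j - a_i|/\text{area}\,[O, a_i, a_j]$ from Lemma \ref{ycc:lemk=2}, and it should fall out of the boundary formula (\ref{ycc:kbeam}) once one exploits that $\sigma_m$ is the cone of $O$ over $2\tilde{\sigma}_m$ and that $\tilde{\sigma}_m$ sits at its half-height.
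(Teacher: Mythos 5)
Your proposal is correct and follows essentially the same route as the paper: shrink each radial $2$-simplex to a parallel $1$-simplex cross-section, apply the inductive hypothesis, then cone each resulting $2$-beam back over $O$ with a stress tensor having the same eigenvectors and rescaled eigenvalues, so that the far face receives zero force. The only difference is that you place the cross-section at half-height rather than the paper's $\tfrac{2}{3}$-height (which makes the barycenters coincide exactly instead of up to a harmless uniform scalar), and this changes only the explicit value of the rescaling constant $c_m$, which the paper computes as $-\tfrac{2}{3}\mu_{i,j}\cdot\frac{\text{area of }[a_i,b_i,c_i]}{\text{volume of }[O,a_i,b_i,c_i]}$.
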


\begin{proof}
Let $\hat{a_i}$ be $2a_i/3$ and $\hat{b_i} = 2b_i/3$ for each $i$. 
Define $\tilde{\mathbf{F}}$ as 
\begin{equation*}
\tilde{\mathbf{F}} = \displaystyle\sum_{i=1}^l \mathbf{F}_i d\mathcal{H}^1_{\vert[\hat{a_i}, \hat{b_i}]},
\end{equation*}
then it is a system of forces in equilibrium because the center of $[\hat{a_i}, \hat{b_i}]$ is the same as that of $[O, a_i, b_i]$. 

By the hypothesis, there exist $\tilde{l} \geq  l$ intervals of application $[\hat{a_1}, \hat{b_1}], \ldots, [\widehat{a_{\tilde{l}}}, \widehat{b_{\tilde{l}}}]$ in $\mathbb{R}^n$ and real symmetric matrices $A_1, \ldots, A_{\tilde{l}}$ such that
\begin{equation*}
\tilde{\mathbf{F}} = \displaystyle\sum_{i=1}^{\tilde{l}} A_i \otimes Beam([\hat{a_i}, \hat{b_i}, \hat{c_i}]), 
\end{equation*}
where $\hat{c_i}$ can be either $\hat{a_j}$ or $\hat{b_j}$ for some $j \neq i$ as long as $[\hat{a_i}, \hat{b_i}], [\hat{b_i}, \hat{c_i}], [\hat{c_i}, \hat{a_i}]$ are contained in the set $\{[\hat{a_i}, \hat{b_i}]\}_{i=1}^{\tilde{l}}$. Note that the original intervals $\{[\hat{a_i}, \hat{b_i}]\}_{i=1}^l$ are also contained in the this set. 
For convenience one may use the notation:
\begin{equation*}
A_i \otimes Beam([\hat{a_i}, \hat{b_i}, \hat{c_i}]) = \mathbf{F}_{i, [\hat{a_i}, \hat{b_i}]}d\mathcal{H}^1_{\vert [\hat{a_i}, \hat{b_i}]} + \mathbf{F}_{i, [\hat{b_i}, \hat{c_i}]} d\mathcal{H}^1_{\vert [\hat{b_i}, \hat{c_i}]} + \mathbf{F}_{i, [\hat{c_i}, \hat{a_i}]} d\mathcal{H}^1_{\vert [\hat{c_i}, \hat{a_i}]},
\end{equation*}
where each $\mathbf{F}_i$ can be calculated using the formula in (\ref{ycc:kbeam}). For example, suppose $A_i = D_{\mu_{i, 1}v_{i, 1}} + D_{\mu_{i, 2}v_{i, 2}}$, then 
\begin{eqnarray*}
\mathbf{F}_{i, [\hat{a_i}, \hat{b_i}]} =\frac{1}{\text{area of $[\hat{a_i}, \hat{b_i}, \hat{c_i}]$}}\displaystyle\sum_{j=1}^2 \mu_{i, j}v_{i, j} [(\zeta_i \llcorner d(\hat{b_i} - \hat{a_i})) \bullet v_{i, j}],
\end{eqnarray*}
where $\zeta_i = (\hat{b_i} - \hat{a_i}) \wedge (\hat{c_i} - \hat{a_i})$. 

Moreover we may ensure that the origin $O$ does not lie in any of the 2-simplices by choosing new points of reference in the proof for the case $k=2$, together with $n \geq 3$.

So connect the origin $O$ with each of the $\hat{a_i}, \hat{b_i}, \hat{c_i}$ and extend the ray to a new point $a_i, b_i, c_i$, respectively, such that $\hat{a_i}, \hat{b_i}, \hat{c_i}$ are equal to $2/3$ of $a_i, b_i, c_i$ correspondingly. It follows that not only $[\hat{a_i}, \hat{b_i}, \hat{c_i}]$ is parallel to $[a_i, b_i, c_i]$ but also lies in the tangent space of $[O, a_i, b_i, c_i]$.

Therefore one can find a Cauchy stress  tensor $B_i$ such that 
\begin{eqnarray*}
&&B_i \otimes Beam([O, a_i, b_i, c_i]) \\
&=& \mathbf{F}_{i, [\hat{a_i}, \hat{b_i}]}d\mathcal{H}^2_{\vert [O, a_i, b_i]} + \mathbf{F}_{i, [\hat{b_i}, \hat{c_i}]} d\mathcal{H}^2_{\vert [O, b_i, c_i]} + \mathbf{F}_{i, [\hat{c_i}, \hat{a_i}]} d\mathcal{H}^2_{\vert [O, c_i, a_i]}
+ 0 \ d\mathcal{H}^2_{\vert[a_i, b_i, c_i]}.
\end{eqnarray*}
Here $B_i = D_{\lambda_{i, 1}v_{i, 1}} + D_{\lambda_{i,2}v_{i, 2}}$, where the $v_{i, j}$ are eigenvectors in $A_i$, and 
\begin{equation*}
\lambda_{i, j} = -\frac{2}{3} \mu_{i, j} \cdot \frac{\text{area of $[a_i, b_i, c_i]$}}{\text{volume of $[O, a_i, b_i, c_i]$}}.
\end{equation*}
Finally one can verify that
\begin{equation*}
\mathbf{F} = \displaystyle\sum_{i=1}^{\tilde{}l} B_i \otimes Beam([O, a_i, b_i, c_i]).
\end{equation*}
\end{proof}

Now let's generalize the proof of the previous lemma to any $k \geq 3$. 
\begin{lemma}\label{ycc:lemmaO}
Suppose $k \geq 3$. Assume that for any system (\ref{ycc:asystemofforcesk-1}) of $(k-2)$-simplices and forces in $\mathbb{R}^n$ with $n \geq k$ and satisfying (\ref{ycc:equilibriumk-1}), it can be decomposed into a finite combination of $(k-1)$-beams. Then given that
\begin{equation}\label{ycc:FkO}
\mathbf{F} = \displaystyle\sum_{i=1}^l \mathbf{F}_i d\mathcal{H}^{k-1}_{\vert [O, a_0^i, \ldots, a_{k-2}^i]},
\end{equation}
where each $(k-1)$-simplex $[O, a_0^i, \ldots, a_{k-2}^i]$ emanates from the origin $O$ to some $(k-2)$-simplex $[a_0^i, \ldots, a_{k-2}^i]$. Given that $\mathbf{F}$ satisfies the equilibrium condition in (\ref{ycc:equilibriumk-1}), then $\mathbf{F}$ is a linear combination of finitely many $k$-beams.
\end{lemma}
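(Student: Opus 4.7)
The plan is to imitate directly the inductive scheme of Lemmas \ref{ycc:lemk=2} and \ref{ycc:lemk=3}: build a shrunken family of $(k-2)$-simplices on which the induction hypothesis applies, lift the resulting $(k-1)$-beams to $k$-beams through the apex $O$, and rescale the Cauchy stress tensors so that the boundary forces match $\mathbf{F}$.

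First I would introduce the shrunken vertices $\hat{a}_j^i = \tfrac{k-1}{k}\,a_j^i$. The key geometric observation is that the barycenter of the scaled face $[\hat{a}_0^i,\ldots,\hat{a}_{k-2}^i]$ coincides with the barycenter of the cone $[O, a_0^i,\ldots, a_{k-2}^i]$; both equal $(a_0^i+\cdots+a_{k-2}^i)/k$. Hence the system
\[
\tilde{\mathbf{F}} \;=\; \sum_{i=1}^l \mathbf{F}_i\, d\mathcal{H}^{k-2}_{\vert [\hat{a}_0^i,\ldots,\hat{a}_{k-2}^i]}
\]
inherits the equilibrium relation (\ref{ycc:equilibriumk-1}) from $\mathbf{F}$. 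The induction hypothesis then produces $\tilde{l}\ge l$ symmetric matrices $A_i$ and $(k-1)$-simplices $[\hat{a}_0^i,\ldots,\hat{a}_{k-1}^i]$ (the last vertex is the analogue of $\hat{c_i}$ in Lemma \ref{ycc:lemk=3}) with
\[
\tilde{\mathbf{F}} \;=\; \sum_{i=1}^{\tilde{l}} A_i\otimes Beam\bigl([\hat{a}_0^i,\ldots,\hat{a}_{k-1}^i]\bigr).
\]
By the same reference-point adjustment used in the proofs for $k=2,3$, I may assume that $O$ lies in none of these $(k-1)$-simplices, which is possible because the ambient dimension exceeds $k-1$. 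I then extend each ray from $O$ through $\hat{a}_j^i$ to the unique $a_j^i$ with $\hat{a}_j^i = \tfrac{k-1}{k}\,a_j^i$; the simplex $[\hat{a}_0^i,\ldots,\hat{a}_{k-1}^i]$ is then parallel to $[a_0^i,\ldots,a_{k-1}^i]$ and sits inside the tangent $k$-plane of the cone $[O, a_0^i,\ldots, a_{k-1}^i]$.

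For each cone I would build a matching Cauchy stress tensor $B_i = \sum_j D_{\lambda_{i,j} v_{i,j}}$ reusing the eigenvectors $v_{i,j}$ of $A_i$---these already lie in the tangent space of the cone, so $B_i \otimes [O, a_0^i,\ldots, a_{k-1}^i]$ is structurally stressed---while rescaling the eigenvalues by
\[
\lambda_{i,j} \;=\; -\tfrac{k-1}{k}\,\mu_{i,j}\cdot\frac{V_{k-1}^i}{V_k^i},
\]
where $V_{k-1}^i$ is the $(k-1)$-volume of $[a_0^i,\ldots,a_{k-1}^i]$ and $V_k^i$ is the $k$-volume of $[O, a_0^i,\ldots, a_{k-1}^i]$; this generalizes the coefficient in Lemma \ref{ycc:lemk=3}. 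Two checks then remain using formula (\ref{ycc:kbeam}). The outer-face boundary force of $B_i\otimes Beam([O, a_0^i,\ldots, a_{k-1}^i])$ on $[a_0^i,\ldots,a_{k-1}^i]$ vanishes automatically, because each $v_{i,j}$ lies in the tangent of that outer face and is therefore orthogonal to the outward normal $\zeta \llcorner d\eta_0$. On each side face opposite a vertex $a_j^i$, the choice of $\lambda_{i,j}$ above is exactly what makes the resulting per-area density match the force that the $(k-1)$-beam puts on its $(k-2)$-face opposite $\hat{a}_j^i$, and summing over $i$ reproduces $\mathbf{F}$.

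The main obstacle will be verifying this last scaling identity. I expect the calculation to reduce---via (\ref{ycc:kbeam}) together with the observation that $\zeta \llcorner d\eta_j$ for the cone is related to the analogous interior multiplication for the $(k-1)$-beam by a proportionality factor built from $\tfrac{k-1}{k}$ and a ratio of Gram determinants---to a routine but bookkeeping-heavy linear algebra computation. A secondary concern is ensuring that the reference-point freedom in the induction step truly suffices to place $O$ off every constructed $(k-1)$-simplex; this parallels the same step in Lemma \ref{ycc:lemk=3} and works for the same dimension-counting reason.
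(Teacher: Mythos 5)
Your proposal reproduces the paper's proof essentially verbatim: shrink each face by the factor $\tfrac{k-1}{k}$ so that barycenters coincide, apply the inductive hypothesis to the shrunken $(k-2)$-dimensional system, cone the resulting $(k-1)$-beams over $O$ (after arranging that $O$ avoids their tangent spaces), and recycle the eigenvectors of each $A_i$ while rescaling the eigenvalues by a ratio of the outer face's volume to the cone's volume. The one concrete discrepancy is your rescaling constant $-\tfrac{k-1}{k}\,\mu_{i,j}\,V^i_{k-1}/V^i_k$: the paper takes $\lambda_{i,m}=-\bigl(\tfrac{k-1}{k}\bigr)^{k-2}\mu_{i,m}\cdot\frac{\text{area of }[a_0^i,\ldots,a_{k-1}^i]}{\text{volume of }[O,a_0^i,\ldots,a_{k-1}^i]}$, the exponent $k-2$ reflecting that the shrunken $(k-2)$-dimensional face carries $\bigl(\tfrac{k-1}{k}\bigr)^{k-2}$ times the $\mathcal{H}^{k-2}$-measure of the full-size face, so the two formulas agree at $k=3$ (the case you extrapolated from) but diverge for $k\ge 4$.
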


\begin{proof}
For each $1 \leq i \leq l$ and $0 \leq j \leq k-2$, let
\begin{equation*}
\hat{a_j^i} = \frac{k-1}{k}a_j^i,
\end{equation*}
so that the center of $[\hat{a_0^i}, \ldots, \widehat{a_{k-2}^i}]$ equals that of $[O, a_0^i, \ldots, a_{k-2}^i]$. Define $\tilde{\mathbf{F}}$ as
\begin{equation*}
\tilde{\mathbf{F}} = \displaystyle\sum_{i=1}^l \mathbf{F}_i d\mathcal{H}^{k-2}_{\vert [\hat{a_0^i}, \ldots, \widehat{a_{k-2}^i}]},
\end{equation*}
and it is an equilibrium system of forces. 

By the hypothesis, there exist $\tilde{l}$, with $\tilde{l} \geq l$, $(k-2)$-simplices of application $\{[\hat{a_0^i}, \ldots, \widehat{a_{k-2}^i}]\}_{i=1}^{\tilde{l}}$ and real symmetric matrices $\{A_i\}_{i=1}^{\tilde{l}}$ such that
\begin{equation*}
\tilde{\mathbf{F}} = \displaystyle\sum_{i=1}^{\tilde{l}} A_i \otimes Beam([\hat{a_0^i}, \ldots, \widehat{a_{k-2}^i},\widehat{a_{k-1}^i}]),
\end{equation*}
where $\widehat{a_{k-1}^i}$ can be any of the vertices of $[\hat{a_0^m}, \ldots, \widehat{a_{k-2}^m}]$ for some $m \neq i$ so that the following $(k-2)$-simplices 
\begin{equation*}
[\hat{a_0^i}, \ldots, \widehat{a_{k-2}^i}], [\hat{a_1^i}, \ldots, \widehat{a_{k-1}^i}], \ldots, [\hat{a_0^i}, \ldots, \widehat{a_{k-3}^i}, \widehat{a_{k-1}^i}]
\end{equation*}
are contained in the set $\{[\hat{a_0^i}, \ldots, \widehat{a_{k-2}^i}]\}_{i=1}^{\tilde{l}}$.
Note that the original $(k-2)$-simplices $\{[\hat{a_0^i}, \ldots, \widehat{a_{k-2}^i}]\}_{i=1}^{l}$ are contained in this set as well. For convenience we write:
\begin{eqnarray*}
&&A_i \otimes Beam([\hat{a_0^i}, \ldots, \widehat{a_{k-2}^i},\widehat{a_{k-1}^i}]) \\
&=& \displaystyle \sum_{j=0}^{k-1} \mathbf{F}_{i, j} d\mathcal{H}^{k-2}_{\vert[\hat{a_0^i}, \ldots, \hat{a_{j-1}^i}, \hat{a_{j+1}^i}, \ldots, \hat{a_{k-1}^i}]}, 
\end{eqnarray*}
where each $\mathbf{F}_{i, j}$ can be calculated using the formula in (\ref{ycc:kbeam}). For example, suppose that $A_i = \sum_{m=0}^{k-2}D_{\mu_{i, m}v_{i, m}}$, then 
\begin{equation*}
\mathbf{F}_{i, j} = \frac{1}{\text{volume of $[\hat{a_0^i}, \ldots,\hat{a_{k-1}^i}]$}} \displaystyle\sum_{m=0}^{k-2}\mu_{i, m}v_{i, m}[(\zeta_i \ \llcorner \ d\eta_{i, j})\bullet v_{i, m}],
\end{equation*}
where 
\begin{eqnarray*}
\zeta_i &=& (\hat{a_1^i}-\hat{a_0^i}) \wedge \cdots \wedge (\hat{a_{k-1}^i}-\hat{a_0^i}), \\
d\eta_{i, j} &=& d(\hat{a_1^i}-\hat{a_0^i}) \wedge \cdots d(\hat{a_{j-1}^i}-\hat{a_0^i}) \wedge d(\hat{a_{j+1}^i}-\hat{a_0^i}) \cdots \wedge d(\hat{a_{k-1}^i}-\hat{a_0^i}). 
\end{eqnarray*}
Moreover we may ensure that the origin $O$ does not lie any of the $(k-1)$-simplices by choosing a new point of reference in the proof for the $k-1$ case under the assumption that $n \geq k$.
 
Next connect the origin $O$ with each $\hat{a_j^i}$ and extend the ray to a new point $a_j^i$ such that 
\begin{equation*}
a_j^i = \frac{k}{k-1}\hat{a_j^i}.
\end{equation*}
It follows that not only $[\hat{a_0^i}, \ldots,\hat{a_{k-1}^i}]$ is parallel to $[a_0^i, \ldots, a_{k-1}^i]$ but also lies in the tangent space of $[O, a_0^i, \ldots, a_{k-1}^i]$.
Therefore one can find a Cauchy stress tensor $B_i$ such that 
\begin{eqnarray*}
&&B_i \otimes Beam([O, a_0^i, \ldots, a_{k-1}^i]) \\
&=& \displaystyle \sum_{j=0}^{k-1} \mathbf{F}_{i, j} d\mathcal{H}^{k-1}_{\vert[O, a_0^i, \ldots, a_{j-1}^i, a_{j+1}^i, \ldots, a_{k-1}^i]}.
\end{eqnarray*}
Here $B_i = \sum_{m=0}^{k-2}D_{\lambda_{i, m}v_{i, m}}$ with the $v_{i, m}$ being the eigenvectors of $A_i$ and 
\begin{equation*}
\lambda_{i, m} = -\left(\frac{k-1}{k}\right)^{k-2}\mu_{i,m} \cdot \frac{\text{area of $[a_0^i, \ldots, a_{k-1}^i]$}}{\text{volume of $[O, a_0^i, \ldots, a_{k-1}^i]$}}.
\end{equation*}
Finally let's verify that 
\begin{equation*}
\mathbf{F} = \displaystyle\sum_{i=1}^{\tilde{l}} B_i \otimes Beam([O, a_0^i, \ldots, a_{k-1}^i]).
\end{equation*}
For each $[O, a_0^i, \ldots, a_{j-1}^i, a_{j+1}^i, \ldots, a_{k-1}^i]$, the total force on it due to the $k$-beams is equal to the coefficient of $[\hat{a_0^i}, \widehat{\ldots, a_{j-1}^i},\widehat{a_{j+1}^i}, \ldots, \hat{a_{k-1}^i}]$ in $\tilde{\mathbf{F}}$ which is equal to $\mathbf{F}_i$ when $i \leq l$ and zero when $i > l$. For each $[a_0^i, \ldots, a_{k-1}^i]$ there is no force on it and its coefficient in $\tilde{\mathbf{F}}$ is also zero because there is no $d\mathcal{H}^{k-1}$ term for it. Thus we obtain (\ref{ycc:FkO}) as desired. 
\end{proof}

\subsubsection{Nonradial}
Given the above lemmas, one can prove the general result by induction. The following three propositions use some of the ideas of Gangbo.  

\begin{proposition}\label{ycc:prop1}
Suppose $n \geq k+2$. Assume that $\mathbf{F}$ is given by (\ref{ycc:asystemofforcesk-1}) and that 
\begin{itemize}
\item[(i)] $[a_0^i, \ldots, a_{k-1}^i] \in \mathbb{R}^{n-1}$ for each $i = 1, \ldots, l$;
\item[(ii)] $\mathbf{F}_i$ is perperdicular to the hyperplane $\mathbb{R}^{n-1}$.
\end{itemize}
Then one may decompose $\mathbf{F}$ into a sum of two equilibrium systems of forces 
\begin{equation*}
\mathbf{F} = \mathbf{B} + \mathbf{F}^h,
\end{equation*}
where $\mathbf{B}$ is a finite combination of $k$-beams, and $\mathbf{F}^h$ is an equilibrium system of $(k-1)$-simplices and forces in $\mathbb{R}^{n-1}$.
\end{proposition}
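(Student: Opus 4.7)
The plan is to absorb the perpendicular component of $\mathbf{F}$ using $k$-beams that cone each base simplex $s_i \subset \mathbb{R}^{n-1}$ from a common apex $O^* \in \mathbb{R}^n \setminus \mathbb{R}^{n-1}$, and then handle the residual ``slanted face'' forces (which all share the apex) via the radial Lemma~\ref{ycc:lemmaO}.

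First I would fix $O^* = he_n$ for some $h > 0$ and form the cone $\Sigma_i = [O^*, a_0^i, \ldots, a_{k-1}^i]$ for each $i$. Its tangent space decomposes as $T_i = T(s_i) \oplus \mathrm{span}\{he_n - a_0^i\}$, and since $e_n \notin T_i$ generically, a structurally stressed $B_i$ on $\Sigma_i$ can only produce a boundary force on $s_i$ lying in $T_i$; requiring its $e_n$-component to equal $c_i$ determines the $\mathbb{R}^{n-1}$-component only up to an element of $T(s_i)$. The crucial choice is to aim for the force $c_i e_n + w_i$ on $s_i$ with $w_i := -(c_i/h)\hat{s_i}$, where $\hat{s_i}$ is the barycenter of $s_i$; because $\hat{s_i} - a_0^i \in T(s_i)$ we have $c_i e_n + w_i = (c_i/h)(he_n - \hat{s_i}) \in T_i$, so formula (\ref{ycc:kbeam}) produces such a $B_i$ (explicitly, one may take the symmetric rank-$\leq 2$ tensor $f_i \otimes \vec{n}_i + \vec{n}_i \otimes f_i - (f_i \cdot \vec{n}_i)\vec{n}_i \otimes \vec{n}_i$ with $f_i = c_i e_n + w_i$ and $\vec{n}_i$ the outward unit normal to $s_i$ in $T_i$).

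Next I would write $\sum_i \partial(B_i \otimes \Sigma_i) = \sum_i (c_i e_n + w_i)\mathcal{H}^{k-1}|_{s_i} + \mathbf{R}$, where $\mathbf{R}$ is supported on the slanted faces $[O^*, a_0^i, \ldots, \hat{a_j^i}, \ldots, a_{k-1}^i]$, all of which contain $O^*$. The whole point of the choice of $w_i$ is that it makes the in-plane sum $\sum_i (c_i e_n + w_i)\mathcal{H}^{k-1}|_{s_i}$ automatically an equilibrium system: its net force and net torque are each linear combinations of $\sum_i c_i e_n$ and $\sum_i c_i \hat{s_i}$ (the latter appearing with a coefficient coming from $w_i = -(c_i/h)\hat{s_i}$ and also from the torque via the identity $v \wedge v = 0$), both of which vanish by (\ref{ycc:equilibriumk-1}) applied to $\mathbf{F}$ (the second because the torque condition combined with $e_n \perp \mathbb{R}^{n-1}$ forces $\sum_i c_i \hat{s_i} = 0$). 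Consequently $\mathbf{R}$, being the difference of the equilibrium system $\sum_i \partial(B_i\otimes\Sigma_i)$ (boundaries are in equilibrium) and the in-plane piece, is itself in equilibrium.

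Finally, since $\mathbf{R}$ is an equilibrium system of $(k-1)$-simplices all emanating from the single vertex $O^*$, and $n \geq k+2$ comfortably satisfies the dimension hypothesis of the radial lemma, applying Lemma~\ref{ycc:lemmaO} (or Lemma~\ref{ycc:lemk=2} when $k=2$), translated so that $O^*$ plays the role of the origin, writes $\mathbf{R}$ as a finite combination of $k$-beams. Setting $\mathbf{B} := \sum_i \partial(B_i\otimes\Sigma_i) - \mathbf{R}$ (still a finite combination of $k$-beams) and $\mathbf{F}^h := -\sum_i w_i \mathcal{H}^{k-1}|_{s_i} = (1/h)\sum_i c_i \hat{s_i}\mathcal{H}^{k-1}|_{s_i}$ gives the decomposition $\mathbf{F} = \mathbf{B} + \mathbf{F}^h$; $\mathbf{F}^h$ has both its simplices and its forces contained in $\mathbb{R}^{n-1}$ and is in equilibrium as $\mathbf{F} - \mathbf{B}$. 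The main obstacle I expect is verifying step two --- that the prescribed force on $s_i$ is indeed realized by an explicit structurally stressed $B_i$ through formula (\ref{ycc:kbeam}); the rest of the argument is essentially an equilibrium bookkeeping that hinges on the single clever choice of $w_i$.
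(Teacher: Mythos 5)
Your argument is correct, and its skeleton coincides with the paper's: both split each force as $\mathbf{F}_i = f_i(e_n - \hat{s_i}) + f_i\hat{s_i}$ (you write this as $(c_i/h)(he_n-\hat{s_i}) + (c_i/h)\hat{s_i}$ with apex $he_n$ instead of $e_n$), both verify equilibrium of the horizontal piece via exactly the same use of the torque condition in (\ref{ycc:equilibriumk-1}), and both finish by feeding a radial system at the apex into Lemma \ref{ycc:lemmaO}. Where you genuinely diverge is the middle step. The paper transports the apex-directed force from $s_i$ to a translated copy $\tilde{s_i}$ whose barycenter is the apex, using a rank-one (purely spring-like) stress $D_{\mu_i v_i}$ on the parallelepiped spanned by $s_i$ and the direction $\mathbf{F}_i^{\bar O}/f_i$; it then barycentrically subdivides $\tilde{s_i}$ so that every resulting face contains the apex. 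You instead cone $s_i$ directly to the apex and place on each cone $\Sigma_i$ a rank-$\le 2$ symmetric tensor (containing a shear component) chosen so that the base-face traction is exactly the apex-directed force; the radial system then appears automatically on the slant faces. Your route is shorter --- one cone per simplex, no translation and no subdivision --- and your explicit tensor $f_i\otimes\vec n_i+\vec n_i\otimes f_i-(f_i\cdot\vec n_i)\,\vec n_i\otimes\vec n_i$ does satisfy $B_i\vec n_i=f_i$, is symmetric, and is structurally stressed since its range lies in $\mathrm{span}\{f_i,\vec n_i\}\subset T(\Sigma_i)$; the price is that your beams are not of the pure ``spring'' type $D_{\mu v}$ that the paper's construction produces, which matters if one cares about the rank of the stresses appearing in $\mathbf{B}$. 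The one point you flag as a possible obstacle --- realizing the prescribed constant traction on $s_i$ through (\ref{ycc:kbeam}) --- is in fact fine, since for a constant tensor the force density on a face is just the tensor applied to the outward unit normal.
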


\begin{proof}
Fix $\bar{O} = e_n$. 
For each $s_i =[a_0^i, \ldots, a_{k-1}^i]$, denote its center by $\hat{s_i}$ which is contained in $\mathbb{R}^{n-1}$ by hypothesis. 
If $\mathbf{F}_i= f_ie_n$, then we can write $\mathbf{F}_i$ as
\begin{equation*}
\mathbf{F}_i = \mathbf{F}_i^{\bar{O}} + \mathbf{F}_i^h, 
\end{equation*}
where 
\begin{equation*}
\mathbf{F}_i^h = f_i \hat{s_i}, \ \ \mathbf{F}_i^{\bar{O}} = \mathbf{F}_i - \mathbf{F}_i^h.
\end{equation*}

Since $\mathbf{F}_i$ is parallel to $e_n$, the $n$th column of $\hat{s_i} \wedge \mathbf{F}_i$ is equal to $f_i\hat{s_i}$, therefore
\begin{equation*}
\displaystyle\sum_{i=1}^l \mathbf{F}_i^h = 0.
\end{equation*}
Moreover since $\hat{s_i} \wedge \hat{s_i} = 0$,
\begin{equation*}
\displaystyle\sum_{i=1}^l \mathbf{F}_i^h \wedge \hat{s_i} = 0.
\end{equation*}
So 
\begin{equation*}
\mathbf{F}^h = \displaystyle\sum_{i=1}^l \mathbf{F}_i^h d\mathcal{H}^{k-1}_{\vert s_i}
\end{equation*}
is a system in equilibrium whose simplices and forces are all contained in $\mathbf{R}^{n-1}$. Being the difference between two equilibrium systems, 
\begin{equation*}
\mathbf{F}^{\bar{O}} = \displaystyle\sum_{i=1}^l \mathbf{F}_i^{\bar{O}} d\mathcal{H}^{k-1}_{\vert s_i}
\end{equation*}
is also in equilibrium. 

Next we want to show that $\mathbf{F}_i^{\bar{O}}$ comes from finitely many $k$-beams. First notice that $\mathbf{F}_i^{\bar{O}} \in$ Span($a_0^i - \bar{O}, \ldots, a_{k-1}^i - \bar{O}$) because
\begin{equation*}
\mathbf{F}_i^{\bar{O}} = f_i(e_n - \hat{s_i}) = -\frac{f_i}{k}((a_0^i  - e_n) + \cdots + (a_{k-1}^i - e_n)).
\end{equation*}

We are going to translate $s_i = [a_0^i, \ldots, a_{k-1}^i]$ in the direction of $\mathbf{F}_i^{\bar{O}}$ to arrive at a new $(k-1)$-simplex $\tilde{s_i} = [\tilde{a_0^i}, \ldots, \tilde{a_{k-1}^i}]$ so that 
\[\bar{O} = \sum_{j = 0}^{k-1} t_j \tilde{a_j^i}.\]
Indeed, one can compute that 
\[\bar{O} = \frac{1}{k}((a_0^i + \frac{\mathbf{F}_i^{\bar{O}}}{f_i}) + \cdots + (a_{k-1}^i + \frac{\mathbf{F}_i^{\bar{O}}}{f_i})),\]
so that 
\[\tilde{a_j^i} = a_j^i + \frac{\mathbf{F}_i^{\bar{O}}}{f_i}, \ \ t_j = \frac{1}{k}.\]
Construct the parallelepiped $P_i$ formed by using the base $s_i$ and the side $\frac{\mathbf{F}_i^{\bar{O}}}{f_i}$: that is, the $P_i$ is spanned by $a_1^i - a_0^i, \ldots, a_{k-1}^i - a_0^i, \frac{\mathbf{F}_i^{\bar{O}}}{f_i}$, and is based at $a_0^i$. Furthermore, let $A_i$ be the symmetric matrix $D_{\mu_iv_i}$, where 
\[v_i = \frac{\mathbf{F}_i^{\bar{O}}}{\vert \mathbf{F}_i^{\bar{O}} \vert}, \ \ \mu_i = \frac{\vert \mathbf{F}_i^{\bar{O}} \vert^2}{f_i \text{area of $s_i$}}.\]
Since $A_i$ contributes to zero forces on all $(k-1)$-faces of $P_i$ except $s_i$ and $\tilde{s_i}$, 
\[\mathbf{F}_i^{\bar{O}} d\mathcal{H}^{k-1}_{\vert s_i} - \mathbf{F}_i^{\bar{O}} d\mathcal{H}^{k-1}_{\vert \tilde{s_i}} = A_i \otimes Beam(P_i),\]
where $P_i$ can be decomposed into finitely many $k$-simplices. 
Furthermore, since $\bar{O}$ is the barycenter of $\tilde{s_i}$, one may rewrite $\tilde{s_i}$ as a sum of subsimplices as follows:
\[\tilde{s_i} = \sum_{j=0}^{k-1} [\tilde{a_0^i}, \ldots, \bar{O}, \ldots, \tilde{a_{k-1}^i}], \]
where $\bar{O}$ is in the $j$th spot.

Hence $\mathbf{F}^{\bar{O}}$ can be decomposed into another two systems: one consists of $k$-beams and the other all connecting with $\bar{O}$. That is
\begin{equation}\label{ycc:propperppush}
\mathbf{F}^{\bar{O}} = \displaystyle\sum_{i=1}^l A_i \otimes Beam(P_i) + \mathbf{F}^{\bar{O}'}
\end{equation}
where
\begin{equation*}
\mathbf{F}^{\bar{O}'} = \displaystyle\sum_{i=1}^l \sum_{j=0}^{k-1} \mathbf{F}_i^{\bar{O}} d\mathcal{H}^{k-1}_{\vert [\tilde{a_0^i}, \ldots, \bar{O}, \ldots, \tilde{a_{k-1}^i}]}.
\end{equation*}

According to the lemma \ref{ycc:lemmaO}, there are symmetric matrices $B_i$ such that 
\begin{equation}\label{ycc:propperpO}
\mathbf{F}^{\bar{O}'} = \displaystyle\sum_{i=1}^{\tilde{l}} B_i \otimes Beam([\bar{O}, a_0^i, \ldots, a_{k-1}^i]).
\end{equation} 
Let $\mathbf{B}$ be the sum (\ref{ycc:propperppush}) + (\ref{ycc:propperpO}). Then $\mathbf{B}$ is a finite combination of $k$-beams and 
\begin{equation*}
\mathbf{F}^{\bar{O}} = \mathbf{B},
\end{equation*}
as desired. 
\end{proof}

\begin{proposition}\label{ycc:prop2}
Suppose $n \geq k+2$. Assume that $\mathbf{F}$ is given by (\ref{ycc:asystemofforcesk-1}) and that 
\begin{itemize}
\item[(i)] $[a_0^i, \ldots, a_{k-1}^i] \in \mathbb{R}^{n-1}$ for each $i = 1, \ldots, l$;
\item[(ii)] $\mathbf{F}_i \in \mathbb{R}^{n}$.
\end{itemize}
Then one may decompose $\mathbf{F}$ into a sum of two equilibrium systems of forces 
\begin{equation*}
\mathbf{F} = \mathbf{B} + \mathbf{F}^h,
\end{equation*}
where $\mathbf{B}$ is a finite combination of $k$-beams, and $\mathbf{F}^h$ is a equilibrium system of $(k-1)$-simplices and forces in $\mathbb{R}^{n-1}$.
\end{proposition}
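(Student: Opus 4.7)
The plan is to reduce to Proposition \ref{ycc:prop1} by orthogonally splitting each $\mathbf{F}_i$ into a component tangent to $\mathbb{R}^{n-1}$ and a component in the $e_n$ direction. Concretely, write
\[
\mathbf{F}_i = \mathbf{F}_i^\| + f_i e_n, \quad \mathbf{F}_i^\| \in \mathbb{R}^{n-1}, \ f_i \in \mathbb{R},
\]
and define the two subsystems
\[
\mathbf{F}^\| = \sum_{i=1}^l \mathbf{F}_i^\| \, d\mathcal{H}^{k-1}_{\vert s_i}, \qquad \mathbf{F}^{\perp} = \sum_{i=1}^l f_i e_n \, d\mathcal{H}^{k-1}_{\vert s_i},
\]
so that $\mathbf{F} = \mathbf{F}^\| + \mathbf{F}^\perp$. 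The first lives entirely in $\mathbb{R}^{n-1}$, while the second satisfies the hypotheses of Proposition \ref{ycc:prop1}.

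The first main step is verifying that both $\mathbf{F}^\|$ and $\mathbf{F}^\perp$ are themselves equilibrium systems. For the net force this is immediate: $\sum \mathbf{F}_i = 0$ splits into $\sum \mathbf{F}_i^\| = 0$ and $\sum f_i e_n = 0$ because $\mathbb{R}^{n-1}$ and $\mathbb{R} e_n$ are orthogonal subspaces of $\mathbb{R}^n$. For the torque one uses the splitting
\[
\textstyle\bigwedge^{2} \mathbb{R}^n \;=\; \bigwedge^{2} \mathbb{R}^{n-1} \;\oplus\; \bigl(e_n \wedge \mathbb{R}^{n-1}\bigr),
\]
valid because by hypothesis (i) each barycenter $\hat{s_i}$ lies in $\mathbb{R}^{n-1}$. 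Thus
\[
\mathbf{F}_i \wedge \hat{s_i} \;=\; \underbrace{\mathbf{F}_i^\| \wedge \hat{s_i}}_{\in\, \bigwedge^{2} \mathbb{R}^{n-1}} \;+\; \underbrace{f_i\, e_n \wedge \hat{s_i}}_{\in\, e_n \wedge \mathbb{R}^{n-1}},
\]
so the vanishing of $\sum \mathbf{F}_i \wedge \hat{s_i}$ forces each piece to vanish separately.

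The second step is to apply Proposition \ref{ycc:prop1} to $\mathbf{F}^\perp$ (whose forces are perpendicular to $\mathbb{R}^{n-1}$ and whose simplices still lie in $\mathbb{R}^{n-1}$): we obtain a decomposition
\[
\mathbf{F}^\perp \;=\; \mathbf{B} \;+\; \mathbf{G},
\]
where $\mathbf{B}$ is a finite combination of $k$-beams and $\mathbf{G}$ is an equilibrium system of $(k-1)$-simplices and forces contained in $\mathbb{R}^{n-1}$. Combining with $\mathbf{F}^\|$, which is already an equilibrium system in $\mathbb{R}^{n-1}$, we set
\[
\mathbf{F}^h \;:=\; \mathbf{G} + \mathbf{F}^\|,
\]
and note that $\mathbf{F}^h$ is an equilibrium system of $(k-1)$-simplices and forces in $\mathbb{R}^{n-1}$ (being the sum of two such systems). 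Then $\mathbf{F} = \mathbf{B} + \mathbf{F}^h$ as required.

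The one subtlety requiring care is the torque decomposition above; it is the only place where hypothesis (i) enters non-trivially, and everything else is essentially bookkeeping plus invocation of Proposition \ref{ycc:prop1}. No additional dimensional constraint beyond $n \geq k+2$ is needed, as this is already required for the application of Proposition \ref{ycc:prop1}.
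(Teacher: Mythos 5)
Your proposal is correct and follows essentially the same route as the paper: orthogonally split each $\mathbf{F}_i$ into its $\mathbb{R}^{n-1}$-component and its $e_n$-component, check that the force and torque conditions split accordingly (using that each barycenter $\hat{s_i}$ lies in $\mathbb{R}^{n-1}$), and apply Proposition \ref{ycc:prop1} to the perpendicular subsystem. Your torque argument via the splitting of $\bigwedge^2\mathbb{R}^n$ is in fact stated more carefully than the paper's version, and your explicit recombination $\mathbf{F}^h = \mathbf{G} + \mathbf{F}^{\|}$ makes precise a step the paper leaves implicit.
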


\begin{proof}
Decompose each $\mathbf{F}_i$ into $\mathbf{F}_i^h$, that is parallel to $\mathbb{R}^{n-1}$, and $\mathbf{F}_i^v = f_ie_n$, for some real number $f_i$. So we obtain two systems as follows:
\begin{eqnarray*}
\mathbf{F}^h &=& \displaystyle\sum_{i=1}^l \mathbf{F}_i^h d\mathcal{H}^{k-1}_{\vert s_i};\\
\mathbf{F}^v &=& \displaystyle\sum_{i=1}^l f_ie_n d\mathcal{H}^{k-1}_{\vert s_i},
\end{eqnarray*}
where
\begin{equation*}
\mathbf{F} = \mathbf{F}^h + \mathbf{F}^v.
\end{equation*}
Since the $n$th column of $\mathbf{F}_i^h \wedge \hat{s}_i$ is equal to zero for each $i$, 
\begin{equation*}
\displaystyle\sum_{i=1}^l f_ie_n \wedge \hat{s}_i.
\end{equation*}
Furthermore $\sum_{i=1}^l f_i = 0$ so that $\mathbf{F}_i^v$ is a system in equilibrium, which implies that $\mathbf{F}_i^h$ is also a system in equilibrium.
Applying Proposition \ref{ycc:prop1} to $\mathbf{F}_i^v$ gives us the desired result.
\end{proof}

\begin{proposition}\label{ycc:prop3}
Suppose $n \geq k+2$. Assume that $\mathbf{F}$ is given by (\ref{ycc:asystemofforcesk-1}) and that 
\begin{itemize}
\item[(i)] $[a_0^i, \ldots, a_{k-1}^i] \in \mathbb{R}^{n}$ for each $i = 1, \ldots, l$;
\item[(ii)] $\mathbf{F}_i \in \mathbb{R}^{n}$.
\end{itemize}
Then one may decompose $\mathbf{F}$ into a sum of two equilibrium systems of forces 
\begin{equation*}
\mathbf{F} = \mathbf{B} + \mathbf{F}^h,
\end{equation*}
where $\mathbf{B}$ is a finite combination of $k$-beams, and $\mathbf{F}^h$ is a equilibrium system of $(k-1)$-simplices and forces in $\mathbb{R}^{n-1}$.
\end{proposition}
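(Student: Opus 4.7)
The plan is to reduce Proposition \ref{ycc:prop3} to Proposition \ref{ycc:prop2} by exhibiting a finite family of $k$-beams $\mathbf{B}_1$ whose boundary ``flattens'' each $(k-1)$-simplex $s_i \subset \mathbb{R}^n$ onto its vertical projection $\bar{s}_i \subset \mathbb{R}^{n-1}$. Once that is accomplished, $\mathbf{F} - \mathbf{B}_1$ is an equilibrium system supported on simplices in $\mathbb{R}^{n-1}$ with forces in $\mathbb{R}^n$, which falls under the hypothesis of Proposition \ref{ycc:prop2}; applying the latter yields $\mathbf{F} - \mathbf{B}_1 = \mathbf{B}_2 + \mathbf{F}^h$, and $\mathbf{B} := \mathbf{B}_1 + \mathbf{B}_2$ finishes the decomposition.

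To build $\mathbf{B}_1$, let $\pi: \mathbb{R}^n \to \mathbb{R}^{n-1}$ be the projection killing $e_n$, and for each $s_i = [a_0^i, \ldots, a_{k-1}^i]$ set $\bar{a}_j^i = \pi(a_j^i)$ and $\bar{s}_i = [\bar{a}_0^i, \ldots, \bar{a}_{k-1}^i]$. The $2k$ points $\{a_j^i\} \cup \{\bar{a}_j^i\}$ span a $k$-dimensional prism $P_i$ whose boundary consists of $s_i$ (top), $\bar{s}_i$ (bottom), and $k$ vertical $(k-1)$-prisms erected over the $(k-2)$-faces of $s_i$. Triangulate $P_i$ into $k$ oriented $k$-simplices $T_0^i, \ldots, T_{k-1}^i$ by the standard staircase decomposition; each $T_j^i$ contains $e_n$ in its tangent space.

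The core step is to equip each $T_j^i$ with a Cauchy stress tensor $A_j^i$ so that the assembled $k$-chain $\mathbf{B}_1 = \sum_{i,j} A_j^i \otimes T_j^i$ produces the prescribed external force $\mathbf{F}_i$ on $s_i$, vanishes on the vertical side faces of each $P_i$, and cancels on the interior $(k-1)$-faces shared between adjacent $T_j^i$'s. Writing $\mathbf{F}_i = f_i e_n + \mathbf{F}_i^h$ with $\mathbf{F}_i^h \in \mathbb{R}^{n-1}$, the vertical component $f_i e_n$ is straightforward: the rank-one stress tensor $D_{e_n}$ sends any face whose outward normal lies in $\mathbb{R}^{n-1}$ to zero force, so all $k$ vertical side faces are automatically annihilated and the vertical force is routed only to $s_i$ and $\bar{s}_i$. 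The horizontal piece $\mathbf{F}_i^h$ is more delicate, since a horizontal rank-one stress $D_v$ with $v$ in the tangent space of $P_i$ also feeds force into the vertical side faces. The plan is to let these horizontal side-face leakages accumulate across the whole family $\{P_i\}$ and then absorb them with auxiliary $k$-beams built via Lemma \ref{ycc:lemmaO}, applied to the resulting radial system around the common vertical hinges $[\bar{a}_j^i, a_j^i]$; the hypothesis $n \geq k+2$ is exactly what provides the extra ambient dimension needed for those absorbing radial beams to be placed without interference with the data already constructed.

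Once $\mathbf{B}_1$ is in place, the residual $\mathbf{F} - \mathbf{B}_1$ is supported on $\{\bar{s}_i\} \subset \mathbb{R}^{n-1}$ and is automatically an equilibrium system, being the difference of two such systems. Proposition \ref{ycc:prop2} then supplies $\mathbf{B}_2$ and $\mathbf{F}^h$, completing the proof. I expect the principal obstacle to be the second half of the construction of $\mathbf{B}_1$, namely the cancellation of horizontal stress leakage onto the vertical side faces of the prisms; this is the delicate bookkeeping step where Lemma \ref{ycc:lemmaO} and the dimension assumption $n \geq k+2$ both enter essentially.
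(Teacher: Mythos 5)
Your overall reduction to Proposition \ref{ycc:prop2} is the right skeleton, and your treatment of the vertical component $f_i e_n$ via the rank-one stress $D_{e_n}$ on a vertical prism is exactly the correct mechanism (side faces of the prism contain $e_n$ in their tangent space, so their normals are killed by $e_n \otimes e_n$). But the part you flag as ``delicate bookkeeping'' --- absorbing the horizontal stress leakage onto the vertical side faces --- is a genuine gap, not a technicality, and the tool you propose for it does not apply. The leaked forces would be supported on the many vertical side faces of the various prisms $P_i$; these $(k-1)$-simplices do not all emanate from a single point, so Lemma \ref{ycc:lemmaO} (which is strictly about radial systems coned over one origin $O$) cannot be invoked, and you give no argument that the leaked system is itself in equilibrium or supported on finitely many simplices in a usable configuration. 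A second, smaller problem: vertical projection $\pi$ degenerates whenever $e_n$ lies in the tangent space of some $s_i$, collapsing $\bar{s}_i$ and the prism, so your construction does not even get started for such simplices.

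The paper sidesteps leakage entirely by never pushing in a direction transverse to the force. For a horizontal force $\mathbf{F}_i^h$ it first splits it into two oblique forces $\mathbf{F}_i^{h,1} = \mathbf{F}_i^h + e_n$ and $\mathbf{F}_i^{h,2} = \mathbf{F}_i^h - e_n$, and then pushes $s_i$ along each of these directions down to the hyperplane, equipping each resulting parallelepiped with the rank-one stress $D_{\mu v}$ where $v$ is the push direction itself. Since every side face of such a parallelepiped contains $v$ in its tangent space, the stress produces force only on the top ($s_i$) and bottom ($\tilde{s_i}^j$) faces --- there is nothing to absorb. For simplices not parallel to the hyperplane, the paper pushes along $\mathbf{F}_i$ itself (after arranging, by splitting $\mathbf{F}_i$ if necessary, that it is neither tangent to $s_i$ nor horizontal), which also handles the degeneracy issue above. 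If you replace your ``project vertically, then repair'' step with this ``push along the force direction'' device, the rest of your argument (difference of equilibrium systems, then Proposition \ref{ycc:prop2}) goes through; as written, the construction of $\mathbf{B}_1$ is incomplete.
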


\begin{proof}
To bring each simplex down to $\mathbb{R}^{n-1}$, we use the following two steps. 

Step 1: Suppose $s_i$ is parallel to the hyperplane $\mathbb{R}^{n-1}$, let's decompose $\mathbf{F}_i$ into its vertical and horizontal components:
\begin{equation*}
\mathbf{F}_i = \mathbf{F}_i^h + \mathbf{F}_i^v.
\end{equation*}
First for the vertical component $\mathbf{F}_i^v$, one can translate $s_i$ vertically so that it lies in the hyperplane. More precisely, let $d$ be such that $a_0^i - de_n \in \mathbb{R}^{n-1}$. Denote $a_j^i - ed_n$ by $\tilde{a_j^i}$ for each $j$ between 0 and $k-1$, and $[\tilde{a_0^i}, \ldots, \tilde{a_{k-1}^i}]$ by $\tilde{s_i}$. Let $I = [0, d]$. Then $s_i \times I$ can be subdivided into $k$-simplices and so we may ``push" $s_i$ to $\tilde{s_i}$. More precisely, there exists a symmetric matrix $A_i$ such that
\[A_i = D_{\mu_i v_i}, \text{ where } \mu_i = \frac{\vert \mathbf{F}_i^v \vert}{\text{ area of $s_i$}}, v_i = \frac{\mathbf{F}_i^v}{\vert \mathbf{F}_i^v \vert},\]
and
\begin{eqnarray*}
&&\mathbf{F}_i^v d\mathcal{H}^{k-1}_{\vert s_i} - \mathbf{F}_i^v d\mathcal{H}^{k-1}_{\vert \tilde{s_i}}\\
&=& A_i \otimes Beam(s_i \times I) \\
&=& A_i \otimes Beam[\tilde{a_0^i}, a_0^i, \ldots, a_{k-1}^i] - A_i \otimes Beam[\tilde{a_0^i}, \tilde{a_1^i}, a_1^i, \ldots, a_{k-1}^i] \\
&&+ \cdots + (-1)^{k-1} A_i \otimes Beam[\tilde{a_0^i}, \tilde{a_1^i}, \ldots, \tilde{a_{k-1}^i},a_{k-1}^i].
\end{eqnarray*}

Next for the horizontal component $\mathbf{F}_i^h$, we break it into another two forces:
\begin{equation*}
\mathbf{F}_i^{h, 1} = \mathbf{F}_i^h + e_n, \ \ \mathbf{F}_i^{h, 2} = \mathbf{F}_i^h - e_n.
\end{equation*}

Then one may ``push" $s_i$ to another $\tilde{s_i}^j \in \mathbb{R}^{n-1}$ in the direction of $\mathbf{F}_i^{h, j}$ for $j=1, 2$. This is analogous to the previous case.  Let $P_i^j$ be the parallelepiped spanned by pushing $s_i$ to $\tilde{s_i}^j \in \mathbb{R}^{n-1}$ in the direction of $\mathbf{F}_i^{h, j}$ for $j = 1, 2$; moreover, let $A_i^{h, j} = D_{\mu_i^{h, j}v_i^{h, j}}$ be defined as
\[\mu_i^{h, j} = \frac{\vert \mathbf{F}_i^{h, j} \vert^2}{(e_n \cdot \mathbf{F}_i^{h, j}) \text{ area of $s_i$}} , \ \ v_i^{h, j} = \frac{\mathbf{F}_i^{h, j}}{\vert  \mathbf{F}_i^{h, j} \vert}.\]
It follows that for each $j$,
\[\mathbf{F}_i^{h, j} d\mathcal{H}^{k-1}_{\vert s_i} - \mathbf{F}_i^{h, j} d\mathcal{H}^{k-1}_{\vert \tilde{s_i}^j} = A_i^{h, j} \otimes Beam(P_i^j),\]
where $P_i^j$ is a finite union of $k$-simplices.

Step 2: Now let us assume that $s_i$ is not parallel to the hyperplane $\mathbb{R}^{n-1}$. Without loss of generality, one may also assume that $\mathbf{F}_i$ is not in the tangent space of $s_i$ and the hyperplane $\mathbb{R}^{n-1}$, i.e., 
\[\mathbf{F}_i \not\in \text{span}(a_1^i - a_0^i, \ldots, a_{k-1}^i - a_0^i), \ \ \mathbf{F}_i \not \in \mathbb{R}^{n-1}.\]
Otherwise we may decompose $\mathbf{F}_i$ into a sum of two forces which don't. 
Then consider the parallelepiped with base $s_i$ and side parallel to $\mathbf{F}_i$.
It intersects $\mathbb{R}^{n-1}$ at some $(k-1)$-simplex $\tilde{s_i}$:
\[\tilde{s_i} = [\ldots, a_j^i - \frac{a_j^i \cdot e_n}{\mathbf{F}_i \cdot e_n}\mathbf{F}_i,\ldots].\]
Let $P_i$ be the polyhedron with top $s_i$, bottom $\tilde{s_i}$, and sides parallel to $\mathbf{F}_i$, then $P_i$ is a finite union of $k$-simplices. Moreover, we can find a symmetric matrix $B_i$ such that 
\[B_i \otimes Beam(P_i) = \mathbf{F}_i d\mathcal{H}^{k-1}_{\vert s_i} - \mathbf{F}_i d\mathcal{H}^{k-1}_{\vert \tilde{s_i}}.\]
Indeed $B_i = D_{\mu_iv_i}$:
\[\mu_i = \frac{\vert \mathbf{F}_i  \vert^2}{(e_n \cdot \mathbf{F}_i ) \text{area of $\tilde{s_i}$}} , \ \ v_i = \frac{\mathbf{F}_i }{\vert  \mathbf{F}_i \vert}.\]
Applying Proposition \ref{ycc:prop2} finishes the proof.
\end{proof}

\begin{theorem}\label{ycc:thm2}
Suppose $k \geq 1$ and $n \geq k+1$. Assume that $\mathbf{F}$ is given by (\ref{ycc:asystemofforcesk-1}) and $\mathbf{F}$ satisfies the equilibrium condition in (\ref{ycc:equilibriumk-1}), then $\mathbf{F}$ can be decomposed into finitely many $k$-beams.
\end{theorem}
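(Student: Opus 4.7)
The plan is to prove Theorem \ref{ycc:thm2} by strong induction on $k$. The base case $k=1$ is Gangbo's decomposition of an equilibrium system of point forces in $\mathbb{R}^n$ ($n \geq 2$) into 1-beams, already recalled in the introduction.

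For the inductive step, assume the statement at level $k-1$: every equilibrium system of $(k-2)$-simplices and forces in $\mathbb{R}^{n'}$ with $n' \geq k$ decomposes into $(k-1)$-beams. This is exactly the hypothesis required by Lemma \ref{ycc:lemmaO} (and its low-dimensional predecessors Lemmas \ref{ycc:lemk=2}, \ref{ycc:lemk=3}), so the radial case at level $k$ becomes available in every $\mathbb{R}^n$ with $n \geq k$. Cascading through the reduction chain, this in turn activates Propositions \ref{ycc:prop1}--\ref{ycc:prop3} at level $k$, since each of those propositions is proved using the next one down and ultimately Lemma \ref{ycc:lemmaO}.

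With these tools in hand, I would perform an auxiliary upward induction on the ambient dimension $n$, establishing the theorem at level $k$ for every $n \geq k+1$. The inductive step is short: for $n \geq k+2$, Proposition \ref{ycc:prop3} writes $\mathbf{F} = \mathbf{B} + \mathbf{F}^h$, with $\mathbf{B}$ a finite sum of $k$-beams and $\mathbf{F}^h$ an equilibrium system of $(k-1)$-simplices and forces lying in the hyperplane $\mathbb{R}^{n-1}$. Viewing $\mathbf{F}^h$ intrinsically as a system in $\mathbb{R}^{n-1}$ (where $n-1 \geq k+1$) and invoking the auxiliary induction hypothesis, one decomposes $\mathbf{F}^h$ into finitely many $k$-beams, all living in $\mathbb{R}^{n-1} \subset \mathbb{R}^n$; combining with $\mathbf{B}$ gives the desired decomposition of $\mathbf{F}$.

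The main obstacle is the base case $n = k+1$, since Propositions \ref{ycc:prop1}--\ref{ycc:prop3} all require the extra dimension $n \geq k+2$. To close this gap I would embed $\mathbb{R}^{k+1}$ as the hyperplane $\{x_{k+2} = 0\} \subset \mathbb{R}^{k+2}$ and replace each $\mathbf{F}_i$ by an auxiliary equilibrium-preserving pair of forces with canceling $\pm e_{k+2}$-perturbations (the perturbations chosen so that both $\sum g_i = 0$ and $\sum g_i \hat{s_i} = 0$, securing the equilibrium conditions \eqref{ycc:equilibriumk-1} for the auxiliary system in $\mathbb{R}^{k+2}$). The perturbed forces are now transverse to $\mathbb{R}^{k+1}$, so Proposition \ref{ycc:prop3} applies in $\mathbb{R}^{k+2}$ and reduces the problem to an equilibrium system back in $\mathbb{R}^{k+1}$; after reassembling the two auxiliary pieces, the $\pm e_{k+2}$-contributions in the resulting $k$-beams cancel, leaving a decomposition inside the original $\mathbb{R}^{k+1}$. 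Verifying that this cancellation actually produces structurally valid $k$-beams in $\mathbb{R}^{k+1}$, rather than spurious $e_{k+2}$-components, is the delicate step I expect to require the most care.
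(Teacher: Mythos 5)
Your overall architecture --- strong induction on $k$ to activate Lemma \ref{ycc:lemmaO} and Propositions \ref{ycc:prop1}--\ref{ycc:prop3} at level $k$, followed by an auxiliary induction on $n$ whose step for $n \geq k+2$ uses Proposition \ref{ycc:prop3} to peel off $k$-beams and drop the residue into $\mathbb{R}^{n-1}$ --- is exactly the paper's proof. You also correctly identified that the only real content left is the base case $n = k+1$.

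But your treatment of that base case has a genuine gap, and in fact it is circular. Proposition \ref{ycc:prop3} applied in $\mathbb{R}^{k+2}$ to your perturbed system does not ``solve'' anything: its output is $\mathbf{B} + \mathbf{F}^h$ where $\mathbf{F}^h$ is an equilibrium system of $(k-1)$-simplices and forces lying in the hyperplane $\mathbb{R}^{k+1}$ --- which is precisely the statement you are trying to prove in the base case. Nothing in the reassembly makes $\mathbf{F}^{h,+} + \mathbf{F}^{h,-}$ vanish. Moreover, the cancellation you are counting on does not occur at the level of chains: the $k$-beams produced for the $+g_ie_{k+2}$- and $-g_ie_{k+2}$-perturbed systems are supported on parallelepipeds tilted in opposite $e_{k+2}$-directions, so they are distinct stressed $k$-simplices that add rather than cancel; only the boundary \emph{forces} on the original $s_i$ recombine into the original $\mathbf{F}_i$. (There is also a smaller issue: the constraints $\sum_i g_i = 0$ and $\sum_i g_i\hat{s_i} = 0$ impose $k+2$ linear conditions on $(g_1,\dots,g_l)$, so for small $l$ the only admissible perturbation may be $g \equiv 0$, which is not transverse.)

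The paper closes the base case differently and entirely inside $\mathbb{R}^{k+1}$: since each tangent space of an $s_i$ is a proper affine subspace of dimension $k-1 < k+1$, one can choose a point $\bar{O} \in \mathbb{R}^{k+1}$ lying in none of them, then run the parallelepiped-pushing construction from the proof of Proposition \ref{ycc:prop1} with this $\bar{O}$ in place of $e_n$: each $s_i$ is translated along a suitable component of $\mathbf{F}_i$ to a simplex $\tilde{s_i}$ whose barycenter is $\bar{O}$, at the cost of finitely many $k$-beams, and the leftover is a radial system of $(k-1)$-simplices all containing $\bar{O}$, which Lemma \ref{ycc:lemmaO} (available since $n = k+1 \geq k$) converts into $k$-beams. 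You should replace your embedding trick with this coning argument; the rest of your proof then matches the paper's.
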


\begin{proof}
We prove by induction on $k$. First when $k=1$, this is done in Gangbo's notes. Let $k \geq 2$  and assume that for any system of $(k-2)$-simplices and forces in $\mathbb{R}^n$ with $n \geq k$, it can be decomposed into a finite combination of $(k-1)$-beams. 

Now let's prove for the case when there are $(k-1)$-simplices with forces acting on them in $\mathbb{R}^n$, where $n \geq k+1$. To prove this case, we are going to induct on the dimension $n$ of $\mathbb{R}^n$. 

If $n=k+1$, there exists $\bar{O}$ that doesn't lie in the tangent space of any $(k-1)$-simplices, then we may push $s_i$ to another $\tilde{s_i}$ following the exact same proof in Proposition \ref{ycc:prop1}. Therefore $\mathbf{F}$ may be decomposed into 
a sequence of $k$-beams and another system $\mathbf{F}^{\bar{O}'}$ of $(k-1)-$simplices that all connect to $\bar{O}$. By the inductive hypothesis and Lemma \ref{ycc:lemmaO}, $\mathbf{F}^{\bar{O}'}$ is also a sequence of $k$-beams.

If $n \geq k+2$, Proposition \ref{ycc:prop1}, \ref{ycc:prop2}, and \ref{ycc:prop3} imply that $\mathbf{F}$ is a sequence $\mathbf{B}$ of $k$-beams and  an equilibrium $\mathbf{F}^h$ system of $(k-1)$-simplices and forces in $\mathbb{R}^{n-1}$. Applying the inductive hypothesis on $n-1$ yields the desired result. 
\end{proof}

\subsection{Matching Stresses}
Finally, we can solve the existence question in two steps. First, suppose $Q$ is a stressed polyhedral $(k-1)$-chain whose coefficients are generalized Cauchy stress tensors. Then $Q$ can be rewritten as
\[Q = S + F,\]
with $S$ being the stresses and $F$ being the external forces.

If we assume that the net force and torque in $F$ are both equal to zero,
then according to Theorem \ref{ycc:thm2}, there exists a structurally stressed polyhedral $k$-chain $P_1$ such that 
\[\partial P_1 = S_1 + F,\]
where the external forces of $P_1$ on its boundary can balance out the $F$ in $Q$. 
Therefore $ \partial P_1 - Q = S_1 - S$ has only internal stresses and no external forces, since the external forces in $F$ are balanced. Furthermore, since the coefficients are symmetric matrices in $Q$ and $P_1$, if there is no external parallel force, then there is no shear stress. This can be observed from the matrix decomposition in (\ref{ycc:tildeA}). 
 It follows that the coefficients of $S_1- S$ are only symmetric matrices. 
Therefore it remains to show that there exists a structurally stressed $k$-chain $P_2$ such that 
\[\partial P_2 = S_1 - S.\]
In fact, one can prove the following lemma.
\begin{lemma}\label{ycc:lem3}
Suppose $R$ is a structurally stressed $(k-1)$-chain in $\mathbb{R}^n$, then $ \partial R = 0$ if and only if there exists a structurally stressed $k$-chain $P$ such that $\partial P = R$.
\end{lemma}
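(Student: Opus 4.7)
The forward direction is immediate from $\partial^2=0$: if $R=\partial P$, then $\partial R = \partial^2 P = 0$.

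For the reverse direction my plan is a direct cone construction. Write $R = \sum_{i=1}^m A_i \otimes \tau_i$ with each $A_i \in Sym_n$ having all nonzero eigenvectors inside the tangent space $T\tau_i$, as required by structural stress. Pick a base point $O \in \mathbb{R}^n$ lying outside the finite union $\bigcup_i \operatorname{aff}(\tau_i)$ of the $(k-1)$-dimensional affine hulls of the $\tau_i$; such an $O$ exists whenever $n \geq k$, since that union has Lebesgue measure zero. Then set
\begin{equation*}
P := [O, R] = \sum_{i=1}^m A_i \otimes [O,\tau_i] \in \mathcal{P}_k(\mathbb{R}^n; Sym_n),
\end{equation*}
where $[O,\tau_i]$ is the $k$-simplex built by coning $\tau_i$ from $O$, exactly as in the paper's inductive definition of $\partial$ on $C_k$.

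Two things need checking. First, $P$ is structurally stressed: writing $\tau_i = [a_0^i,\ldots,a_{k-1}^i]$, the tangent space $T\tau_i = \operatorname{span}(a_j^i - a_0^i : j \geq 1)$ sits inside $T[O,\tau_i] = \operatorname{span}(a_j^i - O : j \geq 0)$, so any eigenvector of $A_i$ that lay in $T\tau_i$ automatically lies in $T[O,\tau_i]$. Second, the cone boundary identity $\partial[O,s] = s - [O,\partial s]$, established in the paper's inductive construction of the boundary on $C_k$, extends $Sym_n$-linearly to stressed chains, giving
\begin{equation*}
\partial P = R - [O,\partial R] = R - [O,0] = R,
\end{equation*}
using the hypothesis $\partial R = 0$.

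The main obstacle is really just confirming that the cone map $[O,\cdot] \colon C_{k-1} \to C_k$ is well-defined, i.e. compatible with the orientation relation $\sigma + \hat\sigma = 0$ and the subdivision relation. The orientation relation is immediate since transposing two vertices of $\tau$ corresponds to transposing the same two vertices (with $O$ left alone) in $[O,\tau]$. For subdivision, if $\tau = \tau_1 \cup \tau_2$ along a common $(k-2)$-face, then $[O,\tau_1]$ and $[O,\tau_2]$ share $[O,\tau_1 \cap \tau_2]$ as a common $(k-1)$-face and their union is $[O,\tau]$, so the image in $C_k$ vanishes on $\tau_1+\tau_2-\tau$. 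The only genuinely exceptional case is $n < k$, where no non-degenerate $k$-simplex exists and $P$ is forced to be $0$; in that case one argues separately that any closed structurally stressed top-dimensional $(k-1)$-chain in $\mathbb{R}^{k-1}$ must already be zero, which follows by pairing with a compactly supported top form and applying Stokes.
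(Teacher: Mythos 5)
Your proposal is correct and follows essentially the same route as the paper: cone each simplex from a point chosen off the affine hulls, observe that structural stress is preserved because $T\tau_i \subset T[O,\tau_i]$, and apply the cone boundary identity $\partial[O,\tau] = \tau - [O,\partial\tau]$ together with $\partial R = 0$. The extra care you take with well-definedness of the cone map on $C_{k-1}$ is a reasonable addition but does not change the argument.
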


\begin{proof}
The if part is trivial, so let us prove the only if part. The idea is to construct cones. First, one can express $R$ as
\[R = \sum_{i=1}^m A_i \otimes \sigma_i.\]
Since $k \leq n$, there exists a point $w \in \mathbb{R}^n$ such that $w$ does not lie in the tangent spaces of $\sigma_i$'s. So one may construct the cone $w \ast \sigma_i$ of $w$ over each $\sigma_i$. Define
\[P = \sum_{i=1}^m A_i \otimes (w \ast \sigma_i).\]
Since $R$ is structurally stressed, the eigenspaces of $A_i$ corresponding to nonzero eigenvalues lie inside the span($\sigma_i$). Therefore they also lie in the span($w \ast \sigma_i$). So $P$ is also structurally stressed. 

Next, since $\partial (w \ast \sigma) = \sigma - w \ast \partial \sigma$ and $\partial R = 0$, one can check that 
\[ \partial P = \sum_{i=1}^m  A_i \otimes \partial (w \ast \sigma_i) =  \sum_{i=1}^m A_i \otimes \sigma_i = R.\]
\end{proof}

\begin{theorem}\label{ycc:thmexistence}
Suppose $Q$ is a stressed polyhedral $(k-1)$-chain such that
\[Q = \sum_{i=1}^m A_i \otimes \sigma_i = S + F,\]
where the $A_i$ are generalized Cauchy stress tensor for each $\sigma_i$, $S$ consists of the stresses, and $F$ consists of the forces. Then there exists a structurally stressed polyhedral $k$-chain $P$ such that
\[\partial P = Q\]
if and only if $\partial Q = 0$ and the net force and the net torque of $F$ are both equal to zero.

\end{theorem}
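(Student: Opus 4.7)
\emph{Proof proposal.} The plan is to treat the two directions separately. For necessity, if $\partial P = Q$ then $\partial Q = \partial^2 P = 0$ immediately. For the force equilibrium, I would combine formula (\ref{ycc:kbeam}) with the discrete divergence identity $\sum_{i=0}^k (-1)^i (\zeta \llcorner d\eta_i) = 0$ expressing that the outward normals (scaled by face areas) of a closed $k$-simplex sum to zero; this shows that the net external force and torque on the boundary of each single structurally stressed $k$-simplex vanish. Summing over the simplices of $P$ and cancelling internal $(k-1)$-faces then leaves only the external-force part $F$ of $\partial P = Q$, so its net force and torque must be zero.

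For sufficiency I would follow the two-step plan sketched just above the theorem. Since each $A_i$ is a generalized Cauchy stress tensor for $\sigma_i$, the decomposition $Q = S + F$ places only orthogonal external forces in $F$ and only normal stresses in $S$. Because $F$ satisfies (\ref{ycc:equilibriumk-1}) by hypothesis, Theorem \ref{ycc:thm2} produces a structurally stressed polyhedral $k$-chain $P_1$ whose boundary external forces reproduce $F$ simplex-by-simplex. Writing $\partial P_1 = S_1 + F$ via Theorem \ref{ycc:thm1}, the residue $S_1$ a priori could contain both normal and shear contributions.

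The key reduction is to show that $S_1$ is in fact structurally stressed. Working face-by-face in the adapted orthonormal frame $(\vec{E}_1,\ldots,\vec{E}_{k-1},\hat{n})$ of (\ref{ycc:tildeA}), each boundary coefficient $\tilde{A}$ is symmetric, so its shear block $\tilde{A}_2$ and parallel-force block $\tilde{A}_4$ hold the same entries. Since $F$ contains only orthogonal external forces, $\tilde{A}_4 = 0$, and symmetry then forces $\tilde{A}_2 = 0$. Hence $S_1$ consists only of the $\tilde{A}_1$-type normal stress and is structurally stressed. Setting $R := S_1 - S$, we have $\partial R = \partial^2 P_1 - \partial Q = 0$, and $R$ is a structurally stressed closed $(k-1)$-chain. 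Lemma \ref{ycc:lem3} then furnishes a structurally stressed $k$-chain $P_2$ with $\partial P_2 = R$, and $P := P_1 - P_2$ satisfies $\partial P = (S_1 + F) - (S_1 - S) = S + F = Q$ as required.

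The hardest step will be the shear-killing just described: one must verify that the $k$-beam decomposition of Theorem \ref{ycc:thm2} matches $F$ not just in aggregate but on each individual $(k-1)$-face, so that $\tilde{A}_4$ truly vanishes face-by-face, and that the symmetry of the ambient coefficient matrix then eliminates $\tilde{A}_2$. Once this observation is in hand, everything else is careful but routine bookkeeping with the previously established tools.
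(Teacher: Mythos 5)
Your proposal follows essentially the same route as the paper: apply Theorem \ref{ycc:thm2} to produce $P_1$ balancing $F$, observe that the residue $R = \partial P_1 - Q$ is a closed structurally stressed $(k-1)$-chain because the symmetry of the coefficient matrices in (\ref{ycc:tildeA}) forces the shear block to vanish once the parallel-force block does, and then close up with the cone construction of Lemma \ref{ycc:lem3}, setting $P = P_1 - P_2$. Your treatment of the necessity direction via the discrete divergence identity is in fact more explicit than the paper's, which dismisses that direction as trivial.
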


\begin{proof}
The if direction is trivial, so let us focus on the only if direction. Since $A_i$ is a generalized Cauchy stress tensor for each $\sigma_i$, there exists an eigenvector $v_i$ of $A_i$ whose eigenvalue is not zero and $v_i$  does not lie in the tangent space of $\sigma_i$.  It contributes to the external force $\mathbf{F_i}$ on $\sigma_i$
\[\mathbf{F_i} = A_i v_i \cdot (\text{ area of $\sigma_i$}). \]
Let 
\[\mathbf{F} = \sum_{i=1}^m \mathbf{F_i} \mathcal{H}_{\vert_{\sigma_i}}^{k-1}.\]
By hypothesis, the system $\mathbf{F}$ satisfies the equilibrium conditions that 
\[\sum \mathbf{F_i} = 0, \ \ \sum \mathbf{F_i} \wedge \hat{\sigma_i} =0,\]
where $\hat{\sigma_i}$ is the barycenter of $\sigma_i$.
According to Theorem \ref{ycc:thm2},  there exists a structurally stressed $k$-chain $P_1$ such that $R = \partial P_1 - Q$
is a structurally stressed $(k-1)$-chain. Moreover, 
\[\partial R =  \partial^2 P_1 -  \partial Q = 0.\]
So by Lemma \ref{ycc:lem3}, there exists a structurally stressed $k$-chain $P_2$ such that $ \partial P_2 = R$. Combining $P_1$ with $P_2$, one yields
\[\partial (P_1 - P_2) =  R + Q - R = Q. \]
So $P = P_1 - P_2$ is the desired structurally stressed $k$-chain. 
\end{proof}

\newpage

\section{The Plateau Problem}
Given the existence property, the next natural question to study is optimization. 
We can put a norm $\vert \cdot \vert$ on symmetric matrices, so that $(Sym_n, \vert \cdot \vert)$ is  a {\it normed abelian group}. Then let's define the {\it mass} norm on a structural stressed polyhedral chain $P = \sum_{i=1}^m A_i \otimes \sigma_i$ as follows. 
\[\mathbb{M}(P) = \sum_{i=1}^m \vert A_i \vert \mathcal{H}^k(\sigma_i),\]
where $\mathcal{H}^k(\sigma_i)$ is the $k$-dimensional volume of $\sigma_i$. 

\noindent{\bf The Cost of Constructing $P$.} Suppose that $\vert A \vert = \sum_{j=1}^n \vert \lambda_j \vert$ with $\lambda_j$ being the eigenvalues of $A$, the product of $\vert A \vert$ with the volume of $\sigma$ can be interpreted as the cost of building such an elastic structure whose shape is $\sigma$ and whose elastic constant is summarized by $\vert A \vert$. For example, given a spring with spring constant $k>0$ and length $L$, its mass is equal to $k \cdot L$. The Plateau problem is to show the existence of a minimal polyhedral chain given a boundary condtion $Q$. Here let's solve this question in two difference contexts.

\subsection{Flat Chain Complex}
Suppose $P$ is a stressed $k$-chain, then one can define the {\it flat norm} using Whitney's notation:
\[\mathcal{F}(P)= \inf\{\mathbb{M}(R) + \mathbb{M}(Q) \ \vert \ P = R + \partial T, \text{ where $T$ is a stressed $(k+1)$-chain}\}.\]
Now for each compact subset $K$ of $\mathbb{R}^n$, let $\mathcal{P}_k(K) \subset \mathcal{P}_k(\mathbb{R}^n)$ be the $k$-dimensional stressed chains supported in $K$. Then one complete this metric space with respect to the flat norm $\mathcal{F}$, calling it $\mathcal{F}_k(K)$. The union of these gives the normed abelian group of {\it flat chains}
\[\mathcal{F}_k(\mathbb{R}^n) = \bigcup_{K \text{ compact}} \mathcal{F}_k(K).\]
Applying the compactness theorem for flat chains, the following theorem shows that there exists a minimal flat chain complex given the boundary condition $Q$.

\begin{theorem}\label{ycc:mincostK}
Suppose $Q$ is a stressed polyhedral $(k-1)$-chain such that
\[Q = \sum_{i=1}^m A_i \otimes \sigma_i = S + F,\]
where the $A_i$ are generalized Cauchy stress tensor for each $\sigma_i$, $S$ consists of the stresses, and $F$ consists of the forces. Moreover, the net force and torque in $F$ are both zero. 

Let $K$ be a sufficiently large compact set in $\mathbb{R}^n$ containing the support of $Q$. Then 
there exists a flat $k$-chain $P_0$ in $\mathcal{F}_k(K; Sym_n)$ such that 
\begin{eqnarray*}
\mathbb{M}(P_0) = \inf \left\{\mathbb{M}(P):  P \in \mathcal{P}_k (K; Sym_n), \partial P = Q, P \text{ is structural.}\right\},
\end{eqnarray*}
and
\[\partial P_0 = Q.\]
\end{theorem}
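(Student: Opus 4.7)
The plan is to apply the direct method of the calculus of variations in the Fleming--White setting for flat chains with coefficients in the normed abelian group $(Sym_n, \vert \cdot \vert)$. The three ingredients are: non-emptiness of the admissible set (supplied by the earlier existence result), compactness of mass-bounded sequences in the flat norm, and lower semicontinuity of $\mathbb{M}$ under flat-norm convergence.

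First I would verify that the admissible set is non-empty so that the infimum $m_0$ is finite. The hypotheses on $Q$ -- namely $\partial Q = 0$ together with vanishing net force and net torque -- are exactly those of Theorem \ref{ycc:thmexistence}, which produces a structurally stressed polyhedral $k$-chain $P^{\ast}$ with $\partial P^{\ast} = Q$. Choosing $K$ large enough to contain $\mathrm{supp}(P^{\ast})$ puts $P^{\ast}$ into $\mathcal{P}_k(K; Sym_n)$ and shows $m_0 \leq \mathbb{M}(P^{\ast}) < \infty$.

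Next I would extract a subsequential flat limit. Pick a minimizing sequence $\{P_j\} \subset \mathcal{P}_k(K; Sym_n)$ of structurally stressed polyhedral $k$-chains with $\partial P_j = Q$ and $\mathbb{M}(P_j) \to m_0$. Since $\mathbb{M}(P_j)$ is bounded (say $\leq m_0 + 1$ for large $j$) and $\mathbb{M}(\partial P_j) = \mathbb{M}(Q)$ is constant, the compactness theorem for flat chains with coefficients in the complete normed abelian group $(Sym_n, \vert \cdot \vert)$ -- the Fleming--White generalization of Federer--Fleming compactness -- yields a subsequence (still denoted $P_j$) converging in the flat norm $\mathcal{F}$ to some $P_0 \in \mathcal{F}_k(K; Sym_n)$. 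Lower semicontinuity of $\mathbb{M}$ under flat convergence then gives $\mathbb{M}(P_0) \leq \liminf_j \mathbb{M}(P_j) = m_0$, and continuity of $\partial$ with respect to $\mathcal{F}$ (indeed $\mathcal{F}(\partial R) \leq \mathcal{F}(R)$) yields $\partial P_0 = \lim_j \partial P_j = Q$. The opposite inequality $\mathbb{M}(P_0) \geq m_0$ holds because $P_0$ is itself a flat limit of admissible competitors, so any lower mass would contradict the definition of $m_0$ via a diagonal polyhedral approximation.

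The main obstacle I expect is technical rather than conceptual: verifying that the Fleming--White machinery applies to the coefficient group $(Sym_n, \vert \cdot \vert)$. Since $Sym_n$ is a finite-dimensional real vector space, the chosen norm $\vert \cdot \vert$ makes it a complete normed abelian group with no small subgroups, which is the standard hypothesis needed for the deformation theorem and hence for compactness and lower semicontinuity of $\mathbb{M}$ on $\mathcal{F}_k(K; Sym_n)$; this step should go through as in the scalar case, but care is needed because the limit $P_0$ is only a flat chain, not necessarily polyhedral and not necessarily structurally stressed, so the theorem is best read as asserting that the infimum over structural polyhedral competitors is attained in the larger space $\mathcal{F}_k(K; Sym_n)$.
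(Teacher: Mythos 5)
Your proposal follows essentially the same route as the paper's proof: non-emptiness of the admissible class via the earlier existence theorem, the Fleming--White compactness theorem for flat chains with coefficients in the complete normed group $(Sym_n,\vert\cdot\vert)$ applied to a minimizing sequence, lower semicontinuity of $\mathbb{M}$ under flat convergence, and $\mathcal{F}(\partial R)\leq\mathcal{F}(R)$ to recover $\partial P_0=Q$. You also correctly flag the one genuine subtlety --- that the limit $P_0$ is only a flat chain, not a priori polyhedral or structurally stressed, so the reverse inequality $\mathbb{M}(P_0)\geq m_0$ requires a polyhedral approximation argument --- a point the paper's own proof handles no more rigorously than you do.
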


\begin{proof}
Given that $K$ is sufficiently large, the set is at least not empty and so the infimum exists. Denote the infimum by $\alpha$. Let $\{P_{n}\}$ be a minimizing sequence in the set such that $\mathbb{M}(P_n)\rightarrow \alpha $ as $i \rightarrow \infty$.  

Since closed balls are compact in the normed abelian group $(Sym_n, \vert \cdot \vert)$, the compactness theorem for flat chains says that for any $\lambda > 0$, the following set 
\[\{P \in \mathcal{F}_k(K; Sym_n): \mathbb{M}(P) + \mathbb{M}(\partial P) \leq \lambda\}\] 
is compact in the flat norm. Since $M(P_n)$ is bounded above and $M(\partial P_i) = M(Q)$ is finite, it follows that there is a convergent subsequence $\{P_{n_j}\} \rightarrow P_0$ for some flat chain $P_0$ supported in $K$ with respect to the flat norm. Given the fact that mass is lower semicontinuous, 
\[\mathbb{M}(P_0) \leq \liminf_{j \rightarrow \infty} \mathbb{M}(P_{n_j}) = \alpha.\]

Furthermore, this is an equality. Suppose not, then for each $j$, one can find a $k$-polygon $S_j$ and a $(k+1)$-polygon $T_j$, supported in $K$, such that 
\[\mathbb{M}(S_j) + \mathbb{M}(T_j) \leq \mathcal{F}(P_{n_j} + 2^{-j}.\]
As $j \rightarrow \infty$, 
\[\mathcal{F}(P_{n_j}) + 2^{-j} \rightarrow \mathcal{F}(P_0).\]
By definition of the flat norm, $\mathcal{F}(P_0) \leq \mathbb{M}(P_0) < \alpha$. Moreover, 
\[\partial(S_j) = \partial(P_{n_l} - \partial T_j) = Q - 0 = Q,\]
thus a contradiction.

Finally, 
\[\mathcal{F}(\partial P_{n_j} - \partial P_0) \leq \mathcal{F}(P_{n_j} - P_0) \rightarrow 0\]
implies that 
\[\partial P_0 = Q. \]
\end{proof}

\noindent \textbf{Remark} We solve the Plateau problem within closed and bounded subsets of $\mathbb{R}^n$. Letting $K$ become bigger, we have a minimizing sequence in $\mathbb{R}^n$.

\newpage

\subsection{Current}

Consider the vector space of $\mathbb{R}^n$-valued $k$-covectors: 
\[\mathbb{R}^n \otimes \wedge^k \mathbb{R}^n \cong [\wedge^k \mathbb{R}^n]^n. \]
It is generated by elements of the form $v \otimes \omega$ where $v \in \mathbb{R}^n$ and $\omega \in \wedge^k \mathbb{R}^n$. In fact, every element of $[\wedge^k \mathbb{R}^n]^n$ can be expressed in the form
\[(\omega_1, \ldots, \omega_n),\]
where $\omega_j \in \wedge^k\mathbb{R}^n$. For example, 
\[v \otimes \omega = (v^1 \omega, \ldots, v^n \omega), \text{ if } v = (v^1, \ldots, v^n).\]
Since $[\wedge^k \mathbb{R}^n]^n$ is a finite- dimensional real vector space, it is a Hilbert space with an inner product $\langle \cdot, \cdot \rangle$ and an Euclidean norm $\vert \cdot \vert$. 

Consider now the vector space of $\mathbb{R}^n$-valued, smooth, compactly supported, differential $k$-forms on $\mathbb{R}^n$:
\[\mathbb{R}^n \otimes \mathcal{D}^k(\mathbb{R}^n) \cong [\mathcal{D}^k (\mathbb{R}^n)]^n = C_c^{\infty}(\mathbb{R}^n, \mathbb{R}^n \otimes \wedge^k \mathbb{R}^n)  .\]
It is generated by elements of the form $v \otimes \omega$ where $v \in \mathbb{R}^n$ and $\omega \in \mathcal{D}^k \mathbb{R}^n$. Then one can define its norm as 
\[|v \otimes \omega| = \displaystyle \sup_{x \in \mathbb{R}^n} |v||\omega(x)|. \]
In general, every element in $\mathbb{R}^n \otimes \mathcal{D}^k(\mathbb{R}^n)$ has the form 
\[(\omega_1, \ldots, \omega_n) = \sum_{j=1}^n e_j \otimes \omega_j,\]
where $\omega_j \in \mathcal{D}^k(\mathbb{R}^n)$ and $e_j$ has 1 in the $j$th coordinate. Then one can define its norm as
\[\vert (\omega_1, \ldots, \omega_n) \vert = \displaystyle \sup_{x \in \mathbb{R}^n} \vert (\omega_1(x), \ldots, \omega_n(x)) \vert.\]

For each stressed $k$-chain $A \otimes \sigma$, one may view it as an $\mathbb{R}^n$-valued linear operator on $\mathbb{R}^n \otimes \mathcal{D}^k(\mathbb{R}^n)$ by  
\[A \otimes \sigma (v \otimes \omega) = A(v) \int_{\sigma} \omega.\]

Extending by linearity, every stressed $k$-chain becomes a $\mathbb{R}^n$-valued linear operator on $\mathbb{R}^n \otimes \mathcal{D}^k(\mathbb{R}^n)$. 

\subsubsection{Comass}
Before proceeding to the Plateau problem using currents, let us discuss the comass first. Let's recall that the \textit{comass} of every $k$-covector $\phi \in \wedge^k \mathbb{R}^n$ is defined as
\[\Vert \phi \Vert = \sup\{\phi(X_1 \wedge \cdots \wedge X_k): X_1 \wedge \cdots \wedge X_n \in \wedge_k \mathbb{R}^n, \vert X_1 \wedge \cdots \wedge X_k \vert \leq 1\}.  \]
Comparing to the Euclidean norm, it satisfies the following relationship:
\[\vert \phi \vert \leq \Vert \phi \Vert \leq {n \choose k}^{\frac{1}{2}} \vert \phi \vert.\]
Thus, the {\it comass} of a smooth, compactly supported, differential $k$-form $\omega$ is defined as follows:
\[ \mathbf{M}(\omega) = \displaystyle\sup_{x\in \mathbb{R}^n} \Vert \omega(x) \Vert.\]
One can check that for any oriented $n$-simplex $\sigma$:
\[\sup \left\{\left \vert \int_{\sigma} \omega \right \vert : \mathbf{M}(\omega) \leq 1 \right\} = \text{ vol}(\sigma).\]

Next, let us generalize the definition to vector-valued differential forms. Namely, for every $\Omega \in \mathbb{R}^n \otimes \mathcal{D}^k(\mathbb{R}^n)$, we say that its {\it comass} norm
is defined as:
\[\mathbf{M}(\Omega) = \displaystyle \sqrt{\sum_{j=1}^n \left(\mathbf{M}(\omega_j)\right)^2}, \text{ if } \Omega = (\omega_1, \ldots, \omega_n).\]
Then the \textbf{current mass} of a $k$-dimensional current $T = (T_1, \ldots, T_n)$ can be generalized to
\[\mathbf{M}(T)=\sup\{|T(\Omega)|: \mathbf{M}(\Omega) \leq 1\}.\]
The lemma shows that $\mathbf{M}(T)$ coincides with $\mathbb{M}(T)$.

\begin{lemma}
For every  stressed $k$-chain $T= \sum_{i=1}^m A_i \otimes \sigma_i$ with $A_i$ being symmetric matrices and $\sigma_i$ being oriented $k$-simplices,   
\[\mathbf{M}(T)=\displaystyle\sum_{i=1}^m |A_i|_{op} \cdot \text{vol}(\sigma_i) = \mathbb{M}(T),\] 
where $|A_{op}|$ is the operator norm of the matrix $A$ and it is equal to the maximum of the absolute value of the eigenvalues of $A$.
\end{lemma}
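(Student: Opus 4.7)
The plan is to establish the claimed identity by proving the two inequalities $\mathbf{M}(T) \le \mathbb{M}(T)$ and $\mathbf{M}(T) \ge \mathbb{M}(T)$ separately, after first rewriting $T$ in a representation in which the simplices $\sigma_i$ have pairwise disjoint relative interiors. Such a rewriting is permitted by the equivalence relations that define $C_k$ and does not alter the quantity $\sum_i |A_i|_{op}\,\mathcal{H}^k(\sigma_i)$, so from now on the $\sigma_i$ may be assumed essentially disjoint.

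For the upper bound, take any test form $\Omega=\sum_{j=1}^n e_j\otimes \omega_j$ with $\mathbf{M}(\Omega)\le 1$ and unfold the pairing as $T(\Omega)=\sum_i A_i(I_i)$, where $I_i\in\mathbb{R}^n$ has $j$-th coordinate $\int_{\sigma_i}\omega_j$. The pointwise comass estimate $|\int_\sigma\omega|\le\mathbf{M}(\omega)\,\mathcal{H}^k(\sigma)$ for scalar $k$-forms, followed by summing squares over $j$, yields $|I_i|\le\mathcal{H}^k(\sigma_i)\,\mathbf{M}(\Omega)\le\mathcal{H}^k(\sigma_i)$. Combining this with the operator-norm bound $|A_i v|\le |A_i|_{op}|v|$ and the Euclidean triangle inequality on $\mathbb{R}^n$ gives $|T(\Omega)|\le\sum_i |A_i|_{op}\mathcal{H}^k(\sigma_i)$, and taking the supremum over $\Omega$ yields $\mathbf{M}(T)\le \mathbb{M}(T)$.

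For the reverse inequality I would construct an (almost) optimal test form. For each $i$ select a unit eigenvector $v_i$ of $A_i$ attaining $|A_iv_i|=|A_i|_{op}$, a unit-comass $k$-covector $\omega^{*(i)}$ satisfying $\omega^{*(i)}(\xi_i)=1$, where $\xi_i$ is the oriented unit simple $k$-vector tangent to $\sigma_i$, together with a bump function $\phi_i$ identically $1$ on $\sigma_i$, the $\phi_i$ having pairwise disjoint supports. Set $\Omega=\sum_i \phi_i(v_i\otimes\omega^{*(i)})$. Disjointness of the supports combined with $|v_i|=\|\omega^{*(i)}\|=1$ gives $\mathbf{M}(\Omega)\le 1$, and direct evaluation produces $T(\Omega)=\sum_i \mathcal{H}^k(\sigma_i)\,A_iv_i$, with each summand of Euclidean length exactly $|A_i|_{op}\mathcal{H}^k(\sigma_i)$.

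The main obstacle is that the Euclidean norm of a sum of vectors is strictly smaller than the sum of the norms unless those vectors are positively aligned, so a single choice of $\Omega$ need not realize the full total $\mathbb{M}(T)$ when the principal eigendirections of distinct $A_i$ disagree. I would resolve this by establishing, as a separate step, additivity of $\mathbf{M}$ under disjoint supports for $\mathbb{R}^n$-valued currents: for essentially disjoint $\sigma_i$, one may localize independent test forms near each $\sigma_i$ and combine them, using the disjointness of supports to control $\mathbf{M}(\Omega)$ while realizing each simplex's optimum in turn. Once this additivity is in hand, the problem reduces to the single-simplex identity $\mathbf{M}(A\otimes\sigma)=|A|_{op}\mathcal{H}^k(\sigma)$, which the construction above verifies directly, completing the proof.
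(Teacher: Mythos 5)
Your upper bound is sound and coincides with the paper's own displayed chain of inequalities: writing $T(\Omega)=\sum_i A_i(I_i)$ with $I_i=(\int_{\sigma_i}\omega_1,\dots,\int_{\sigma_i}\omega_n)$ and estimating $|I_i|\le \mathcal{H}^k(\sigma_i)\,\mathbf{M}(\Omega)$ gives $\mathbf{M}(T)\le\mathbb{M}(T)$; and your single-simplex construction correctly yields $\mathbf{M}(A\otimes\sigma)=|A|_{op}\,\mathrm{vol}(\sigma)$. (For the reverse inequality the paper itself offers only a one-line appeal to the Riesz representation theorem, so there is no explicit argument there for you to have missed.)

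The gap is precisely the step you flag as the crux: the ``additivity of $\mathbf{M}$ under disjoint supports'' that you propose to prove as a separate lemma is false for these $\mathbb{R}^n$-valued currents, and with the paper's definitions the reverse inequality itself fails for $m\ge 2$. Take $n=2$, $k=1$, two disjoint unit segments $\sigma_1,\sigma_2$, and $A_1=e_1\otimes e_1$, $A_2=e_2\otimes e_2$. For any $\Omega=(\omega_1,\omega_2)$ one computes $T(\Omega)=\bigl(\int_{\sigma_1}\omega_1\bigr)e_1+\bigl(\int_{\sigma_2}\omega_2\bigr)e_2$, so
\[
|T(\Omega)|^2\le \mathbf{M}(\omega_1)^2+\mathbf{M}(\omega_2)^2=\mathbf{M}(\Omega)^2\le 1,
\]
hence $\mathbf{M}(T)\le 1$ while $\mathbb{M}(T)=2$. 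Two things break in your construction. First, since $\mathbf{M}(\Omega)=\bigl(\sum_j\mathbf{M}(\omega_j)^2\bigr)^{1/2}$ takes the sup over $x$ componentwise \emph{before} the $\ell^2$ sum over $j$, disjointness of the supports of the $\phi_i$ does \emph{not} give $\mathbf{M}(\Omega)\le 1$: in the example your combined test form has comass $\sqrt2$. Second, and more fundamentally, even after repairing the comass to a pointwise norm, $T(\Omega)=\sum_i \mathcal{H}^k(\sigma_i)\,A_iv_i$ is a \emph{vector} sum measured in the Euclidean norm of $\mathbb{R}^n$, and localization cannot rotate the directions $A_iv_i$ (pinned to the ranges of the $A_i$) into alignment; this is exactly the obstacle you identified, and it is not removable. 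Scalar mass is additive over disjoint supports because signs can be aligned; the vector-valued analogue behaves like an $\ell^2$ combination rather than an $\ell^1$ sum. So your route cannot close the lower bound, and indeed the stated equality can only hold when the vectors $A_iv_i$ are positively parallel (e.g.\ $m=1$); a correct treatment must either redefine $\mathbf{M}(T)$ (say, as the total variation masses $\sum_i|\nu_i|$ coming from the Riesz representation, which the paper gestures at) or weaken the claim.
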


\begin{proof}
On the one hand, if we compute
\[A_i \otimes \sigma_i (\Omega) = A_i(\int_{\sigma_i}\omega_1, \ldots, \int_{\sigma_i}\omega_n),\]
so that 
\[|T(\Omega)| \leq \displaystyle\sum_{i=1}^m |A_i|_{op}\sqrt{\sum_{j=1}^n (\int_{\sigma_i}\omega_j)^2} \leq \displaystyle\sum_{i=1}^m |A_i|_{op}\text{vol}(\sigma_i)\sqrt{\sum_{j=1}^n \mathbf{M}(\omega_j)^2}.\]

On the other hand, if we apply the Riesz representation theorem to $T_i$ to obtain a Radon measure $\nu_i$, then
\[\mathbf{M}(T_i) = |\nu_i|(\mathbb{R}^n).\]
Using this fact, one can show the equality. 

\end{proof}

\subsubsection{Riesz Representation Theorem}

One may solve the Plateau problem after adapting the vector-valued version of the Riesz representation theorem[Thm 4.14, Chp 1, Simon's GMT] to our stressed $k$-chains as follows.
\begin{theorem}
 Suppose $T= \sum_{i=1}^m A_i \otimes \sigma_i$ is a stressed $k$-chain with $A_i$ being symmetric matrices and $\sigma_i$ being oriented $k$-simplices, then
\[T: \mathbb{R}^n \otimes \mathcal{D}^k(\mathbb{R}^n) \longrightarrow \mathbb{R}^n\] 
is linear and for all compact subsets $K \subset \mathbb{R}^n$,
\[\sup\{\vert T(\Omega) \vert: \mathbf{M}(\Omega) \leq 1, \text{ spt }\Omega \subset K \} < \infty. \]
The Riesz Representation Theorem implies that there are positive Radon measures $\nu_i$ and $\nu_i$-measurable functions 
\[\vec{T}_i: \mathbb{R}^n \longrightarrow \mathbb{R}^n \otimes \wedge^k \mathbb{R}^n, \text{ with } \vert \vec{T}_i(x) \vert = 1 \text{ for $\nu_i$ almost all $x$ in $\mathbb{R}^n$},\] 
such that the $i$th entry of $T(v \otimes \omega)$ is equal to 
\[\int_{\mathbb{R}^n} \langle v \otimes \omega(x), \vec{T}_i(x) \rangle d\nu_i,\]
for all $v \in \mathbb{R}^n$ and $\omega \in \mathcal{D}^k(\mathbb{R}^n)$.
\end{theorem}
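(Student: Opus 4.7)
The proof divides naturally into three parts corresponding to the three assertions: linearity, the local boundedness estimate, and the Riesz representation. None of the three is particularly deep; the point is to verify that a stressed chain genuinely fits the hypotheses of the vector-valued Riesz theorem cited from Simon.

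For \emph{linearity}, I would start from the generator-level definition $A\otimes\sigma(v\otimes\omega)=A(v)\int_{\sigma}\omega$ and observe that it is bilinear in $(v,\omega)$, so it extends by linearity to a well-defined map on $\mathbb{R}^n\otimes\mathcal{D}^k(\mathbb{R}^n)$. The sum $T=\sum_i A_i\otimes\sigma_i$ is then a finite sum of linear maps, hence linear. One brief check is that the extension does not depend on the chosen representation of an element of $\mathcal{D}^k(\mathbb{R}^n)$, which follows from the universal property of the tensor product.

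For the \emph{local boundedness}, fix a compact $K\subset\mathbb{R}^n$ and $\Omega=(\omega_1,\dots,\omega_n)$ with $\mathbf{M}(\Omega)\leq 1$ and $\operatorname{spt}\Omega\subset K$. By definition of the comass on vector-valued forms, each component satisfies $\mathbf{M}(\omega_j)\leq 1$. The scalar comass bound $|\int_{\sigma}\omega|\leq\mathbf{M}(\omega)\cdot\operatorname{vol}(\sigma)$, already noted in the excerpt, gives $|A_i\otimes\sigma_i(\Omega)|\leq|A_i|_{op}\,\operatorname{vol}(\sigma_i)\,\mathbf{M}(\Omega)$ after the Cauchy--Schwarz step that was used in the comass lemma above. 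Summing over $i$,
\[
|T(\Omega)|\ \leq\ \sum_{i=1}^m |A_i|_{op}\,\operatorname{vol}(\sigma_i),
\]
which is a finite constant independent of $\Omega$. Taking the supremum gives the stated estimate; note that this in fact shows the bound holds \emph{uniformly} in $K$, which is stronger than needed.

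For the \emph{Riesz representation} conclusion, I would project $T$ onto its $n$ real components: writing $T=(T_1,\dots,T_n)$, each $T_i:\mathcal{D}^k(\mathbb{R}^n;\mathbb{R}^n)\to\mathbb{R}$ is a linear functional that is locally bounded with respect to the comass, by the estimate just proved. This is exactly the hypothesis of the vector-valued Riesz representation theorem (Simon, Thm.\ 4.14, Ch.\ 1): there exist a positive Radon measure $\nu_i$ on $\mathbb{R}^n$ and a $\nu_i$-measurable section $\vec{T}_i:\mathbb{R}^n\to\mathbb{R}^n\otimes\wedge^k\mathbb{R}^n$ with $|\vec{T}_i(x)|=1$ for $\nu_i$-a.e.\ $x$, such that
\[
T_i(v\otimes\omega)\ =\ \int_{\mathbb{R}^n}\bigl\langle v\otimes\omega(x),\vec{T}_i(x)\bigr\rangle\,d\nu_i(x)
\]
for every $v\in\mathbb{R}^n$ and $\omega\in\mathcal{D}^k(\mathbb{R}^n)$. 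Reassembling the components yields the theorem.

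The only real \emph{obstacle} is conceptual rather than technical: one must ensure that Simon's scalar-valued statement is applied to the correct ``scalar extraction'' of $T$, and that the inner product appearing in the representation matches the one placed on $\mathbb{R}^n\otimes\wedge^k\mathbb{R}^n$ in the earlier paragraph. This is straightforward but should be spelled out, since the natural identification $\mathbb{R}^n\otimes\mathcal{D}^k(\mathbb{R}^n)\cong\mathcal{D}^k(\mathbb{R}^n;\mathbb{R}^n\otimes\wedge^k\mathbb{R}^n)$ is what allows the vector-valued Riesz theorem to be invoked verbatim.
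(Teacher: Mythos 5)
Your proposal is correct and follows essentially the same route as the paper: bound $|T(\Omega)|$ via the comass lemma's estimate $\sum_i |A_i|_{op}\operatorname{vol}(\sigma_i)$, identify $\Omega$ with a compactly supported $H$-valued function for $H=\mathbb{R}^n\otimes\wedge^k\mathbb{R}^n$, and apply Simon's Riesz representation theorem to each scalar component $T_i$ before reassembling. The only step the paper makes explicit that you leave implicit is the density extension of $T$ from $C_c^\infty(X,H)$ to $C_c(X,H)$ before invoking the theorem, which is a minor technical point within the same argument.
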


\begin{proof}
In the statement of the Riesz Representation Theorem, let $X = \mathbb{R}^n$, which is a locally compact Hausdorff space, and let $H = \mathbb{R}^n \otimes \wedge^k \mathbb{R}^n$, which is a finite dimensional real Hilbert space with inner product norm $\vert \cdot \vert$. 
For every $\Omega \in \mathbb{R}^n \otimes \mathcal{D}^k(\mathbb{R}^n)$, it can be viewed as a smooth function from $\mathbb{R}^n$ to $\mathbb{R}^n \otimes \wedge^k \mathbb{R}^n$ with compact support. So $\Omega \in C_c^{\infty}(X, H)$ and  
\[T: C_c^{\infty}(X, H) \longrightarrow \mathbb{R}^n.  \]
Moreover, since every element of $C_c(X, H)$ can be approximated by some $\Omega \in C_c^{\infty}(X, H)$, 
$T$ can be extended linearly to 
\[T: C_c(X, H) \longrightarrow \mathbb{R}^n.\]

It is easy to check that each coordinate function $T_i$ of $T$ is linear. Furthermore, by the previous lemma, 
\[T_i(\Omega) \leq \vert T(\Omega) \vert \leq \mathbf{M}(T),\]
 whenever $\Omega \in C_c^{\infty}(X, H)$ and $\mathbf{M}(\Omega) \leq 1$. It follows that for every compact set $K \subset \mathbb{R}^n$, 
\[\sup\{T_i(\Omega): \Omega \in C_c(X, H), \mathbf{M}(\Omega) \leq 1, \text{ spt} \Omega \subset K\} \leq \mathbf{M}(T) < \infty.\]
Therefore there is a positive Radon measure $\nu_i$ on $X$ and $\nu_i$-measurable function $\vec{T_i}: X \rightarrow H$ with $\vert \vec{T_i} \vert = 1$ for $\nu_i$ almost all $x$ on $X$ such that 
\[T_i(\Omega) = \int_X \langle \Omega(x), \vec{T_i}(x) \rangle d\nu_i(x) \  \text{ for any } \Omega \in C_c(X, H).  \]
In particular, for all $v \in \mathbb{R}^n$ and $\omega \in \mathcal{D}^k(\mathbb{R}^n)$, the $i$th entry of $T(v \otimes \omega)$ is equal to 
\[\int_X \langle v \otimes \omega(x), \vec{T_i}(x) \rangle d\nu_i(x),\]
as desired. 
\end{proof}

\noindent \textbf{Remark} Each $T_i$ is a generalized current, that is a linear operator on the set of $\mathbb{R}^n$-valued smooth differential $k$-forms with compact support. Moreover, every stressed $k$-chain $T = (T_1, \ldots, T_n)$ can be viewed as a vector-valued current.

\begin{corollary}
    Suppose $Q$ is a stressed polyhedral $(k-1)$-chain such that 
    \[Q = \displaystyle\sum_{i}^m A_i \otimes \sigma_i = S+F,\]
where the $A_i$ are generalized Cauchy stress tensors for the $\sigma_i$, $S$ consists of stresses, and $F$ consists of forces. Moreover, the net force and torque in $F$ are both zero. 

Then there exists $T^0 = (T_1^0, \ldots, T_n^0)$ such that 
\[T_i^0 \in C_c(X, H)^{\ast}, \partial T^0 = Q, \] 
\[\mathbb{M}(T^0)=\inf\{\mathbb{M}(T): T \in \mathcal{P}_k(\mathbb{R}^n,Sym_n), \partial T = Q, T \text{ is structural.}\}\]
\end{corollary}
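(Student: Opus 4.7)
The plan is to run the direct method in the space of vector--valued currents set up above, paralleling the flat--chain argument of Theorem~\ref{ycc:mincostK} but exploiting the Riesz representation and mass identity just developed. By Theorem~\ref{ycc:thmexistence} the admissible family
\[\mathcal{A} = \{T \in \mathcal{P}_k(\mathbb{R}^n, Sym_n) : \partial T = Q,\ T \text{ structural}\}\]
is nonempty, so $\alpha := \inf_{T \in \mathcal{A}} \mathbb{M}(T)$ is a finite nonnegative real. Choose a minimizing sequence $\{T_j\} \subset \mathcal{A}$ with $\mathbb{M}(T_j) \downarrow \alpha$. A preliminary reduction, analogous to the one in Theorem~\ref{ycc:mincostK} and using the cone construction of Lemma~\ref{ycc:lem3}, confines every $T_j$ to a fixed compact set $K \subset \mathbb{R}^n$ containing $\mathrm{spt}(Q)$ without affecting the infimum.

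Next, view each $T_j = (T_{j,1}, \ldots, T_{j,n})$ as an $n$--tuple of real--valued $k$--currents via the Riesz representation of the preceding theorem. The mass--identity lemma gives $\mathbf{M}(T_j) = \mathbb{M}(T_j) \leq \alpha + 1$ for large $j$, and since $\partial T_j = Q$ is fixed, $\mathbf{M}(\partial T_{j,i})$ is uniformly bounded by a constant depending only on $\mathbb{M}(Q)$. Applying the Federer--Fleming compactness theorem (equivalently, Whitney's flat--chain compactness over the finite--dimensional coefficient space $Sym_n$) coordinatewise in $i = 1, \ldots, n$ and extracting diagonally yields a subsequence converging weakly--$*$ in $C_c(K, H)^*$ to a limit tuple $T^0 = (T^0_1, \ldots, T^0_n)$ with $T^0_i \in C_c(X, H)^*$ supported in $K$.

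Finally, weak--$*$ continuity of the boundary operator gives $\partial T^0 = \lim_j \partial T_j = Q$, while lower semicontinuity of the comass norm, combined coordinatewise through $\mathbf{M}(\Omega) = \sqrt{\sum_i \mathbf{M}(\omega_i)^2}$, yields
\[\mathbb{M}(T^0) \;=\; \mathbf{M}(T^0) \;\leq\; \liminf_{j \to \infty} \mathbf{M}(T_j) \;=\; \alpha.\]
The reverse inequality $\mathbb{M}(T^0) \geq \alpha$ is obtained by polyhedral approximation: $T^0$ can be approximated in flat norm by structural polyhedral chains whose boundaries converge to $Q$, and a small boundary correction (applying Theorem~\ref{ycc:thmexistence} to the defect) produces competitors in $\mathcal{A}$ with masses converging to $\mathbb{M}(T^0)$, forcing $\alpha \leq \mathbb{M}(T^0)$. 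The main obstacle is precisely this last approximation step: one must verify that a current limit supported in $K$ with boundary $Q$ can be approximated by \emph{structural} polyhedral chains with the \emph{exact} boundary $Q$ under mass control. The confinement to a common compact set and the coordinatewise lower semicontinuity are also technical, but follow standard patterns once the vector--valued Riesz correspondence is in hand.
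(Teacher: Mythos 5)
Your proposal follows essentially the same route as the paper: extract a weak$^{\ast}$ limit of a minimizing sequence (Banach--Alaoglu in the paper, Federer--Fleming/Whitney compactness coordinatewise in yours), identify the limit via the vector-valued Riesz representation, and pass the boundary condition to the limit using Stokes' theorem. The one step you flag as the main obstacle --- upgrading $\mathbb{M}(T^0) \leq \alpha$ to equality --- is exactly the step the paper handles by simply asserting $\mathbf{M}(T^n) \to \mathbf{M}(T^0)$ without justification, so your more cautious account via lower semicontinuity plus a polyhedral approximation of the limit is, if anything, the more honest description of what actually remains to be verified.
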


\begin{proof}
According to Banach-Alaoglu, the closed unit ball is compact with respect to the weak$^{\ast}$. If $\{T^n\}$ is minimizing sequence, then there is $T^0$ such that for each $i$, 
\[T_i^0 \in (C_c(X, H))^{\ast},\]
and $T_i^n  \rightarrow T_i^0$ weakly. Namely, for all $\Omega \in \mathbb{R}^n \otimes \mathcal{D}^k(\mathbb{R}^n)$
\[T_i^n (\Omega) \rightarrow T_i^0(\Omega).\]
Then we can apply the Riesz representation theorem to $T_i^0$ to obtain $\nu_i^0$ and $\vec{T_i^0}$. It follows that 
\[\mathbf{M}(T^n) \rightarrow \mathbf{M}(T^0) \Rightarrow \mathbb{M}(T^n)\rightarrow \mathbb{M}(T^0). \]

One can show that the boundary map for currents extends the boundary chain map for polygons. For example, let $A \otimes \sigma$ be a stressed $k$-polygon and $v \otimes \omega$ be a vector-valued differential form of degree $k-1$. On the one hand, as a current
\[(\partial A\otimes \sigma)(v \otimes \omega) = (A\otimes \sigma)(v \otimes  d\omega) = A(v)\int_{\sigma}d\omega.\]
On the other hand, as a chain
\[(\partial A\otimes \sigma)(v \otimes \omega) = (A\otimes \partial \sigma)(v \otimes \omega) = A(v)\int_{\partial \sigma} \omega. \]
They are equal by Strokes' theorem. It follows that for every $\Omega \in \mathbb{R}^n \otimes \mathcal{D}^{k-1}(\mathbb{R}^n)$,
\[\partial T^0(\Omega) = T^0(d\Omega) = \lim_{n\rightarrow\infty}T^n(d\Omega) =\lim_{n\rightarrow\infty}\partial T^n(\Omega) = Q(\Omega). \]
Therefore,
\[\partial T^0 = Q.\]

\end{proof}

 \section{An Important Proposition}
 The next natural question to ask is that :``What does a minimizer look like?" Current research is still going on. We are going to show one progress in discovering the topological properties of minimizers: compressed and stretched springs must be perpendicular to each other at non-boundary points.  
 
\begin{proposition}
Fix a system of pointed forces in equilibrium. Suppose a minimizer exists for this system. Then at any point outside the system, if there are finitely many springs intersecting at the point, the following conditions must hold:
\begin{enumerate}
    \item The sum of boundary forces at this point must be zero.
    \item The angle between a stretched spring and a compressed spring must be $90^{\circ}$.
\end{enumerate}
\end{proposition}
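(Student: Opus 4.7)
The plan is to treat the two assertions at the interior junction $p$ by first-order variation. Write the relevant part of the minimizer as $P = \sum_k \lambda_k (u_k \otimes u_k) \otimes \sigma_k$, where $u_k$ is the unit outward direction of spring $\sigma_k$ at $p$, $\lambda_k > 0$ for compressed and $\lambda_k < 0$ for stretched, and $L_k = |q_k - p|$ is the length of $\sigma_k$.

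For (1), the boundary condition $\partial P = Q$ combined with $p \notin \operatorname{supp} Q$ says the net boundary force at $p$ vanishes. By the $k=1$ specialization of (\ref{ycc:kbeam}), the structural spring $\lambda_k (u_k \otimes u_k) \otimes \sigma_k$ contributes at $p$ the force $\lambda_k(u_k\otimes u_k) u_k = \lambda_k u_k$, so $\sum_k \lambda_k u_k = 0$, which is exactly (1). For (2), combine this with the first-order mass optimality. Since $\mathbb{M}(P) = \sum_k |\lambda_k| L_k$ and $\nabla_p L_k = u_k$, perturbing $p \mapsto p + \varepsilon \Delta$ gives first-order change $\varepsilon \bigl(\sum_k |\lambda_k| u_k\bigr)\cdot \Delta$, whose vanishing for all $\Delta$ forces $\sum_k |\lambda_k| u_k = 0$. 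Adding and subtracting with (1) yields the refined balances
\[\sum_{k \in C}\lambda_k u_k \;=\; 0, \qquad \sum_{k \in S}\lambda_k u_k \;=\; 0,\]
so the compressed and stretched subsystems balance independently at $p$.

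To upgrade to perpendicularity, consider the combined variation that displaces all compressed endpoints from $p$ to $p + \varepsilon e$ and all stretched endpoints from $p$ to $p + \varepsilon f$ simultaneously, connecting the two new junctions by a short rebalancing segment chosen to keep $\partial P_\varepsilon = Q$ (this is possible to second order in $\varepsilon$ precisely because each subsystem already balances on its own, so the residual force at each shifted junction is $O(\varepsilon)$ and the corrective spring has mass $O(\varepsilon^2)$). Expanding $\mathbb{M}(P_\varepsilon) - \mathbb{M}(P)$ to order $\varepsilon^2$ produces a quadratic form in $(e,f)$; the intrinsic diagonal parts $\sum_C (\lambda_c/L_c)|e-(u_c\cdot e)u_c|^2$ and $\sum_S (|\lambda_s|/L_s)|f-(u_s\cdot f)u_s|^2$ are automatically $\geq 0$, but the mixed term coupling the compressed and stretched perturbations depends linearly on the inner products $u_c\cdot u_s$. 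Nonnegativity of the full form for all $(e,f)$ then forces $u_c\cdot u_s = 0$ for every compressed--stretched pair, which is (2).

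The principal obstacle is the bookkeeping of the rebalancing segment together with the quadratic length changes of the shifted springs, all of which contribute at order $\varepsilon^2$ and must be reconciled to isolate the cross-term that encodes perpendicularity. If the direct variational argument proves too cumbersome, an equivalent route via the Bouchitt\'e--Gangbo--Seppecher duality \cite{G2} bypasses the computation: the optimal dual displacement field $\phi$ with $\|\varepsilon(\phi)\|_{\mathrm{op}}\le 1$, saturating on $\operatorname{supp} P$, forces compressed (resp.\ stretched) springs to align with the $-1$- (resp.\ $+1$-)eigenvector of the symmetric tensor $\varepsilon(\phi)(p)$, and these are automatically orthogonal.
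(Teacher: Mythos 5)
Your treatment of condition (1) is fine and agrees with the paper's (the net force at an interior junction must vanish because the point carries no prescribed force). The argument for condition (2), however, has a genuine gap, and it is not merely the ``bookkeeping'' you flag at the end: the competitors in both of your variations are not admissible. When you move the junction from $p$ to $p+\varepsilon\Delta$ keeping the far endpoints $q_k$ fixed, every unit direction $u_k$ rotates by $O(\varepsilon)$, so the axial forces $\lambda_k u_k$ transmitted to the $q_k$ change at first order; the perturbed chain no longer satisfies $\partial P_\varepsilon=Q$ (nor equilibrium at adjacent nodes), and minimality over \emph{admissible} competitors does not yield $\sum_k|\lambda_k|u_k=0$ without first repairing the structure --- a repair whose mass also enters at first order. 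As a sanity check, that identity combined with (1) forces the compressed and the stretched sub-systems to balance separately at $p$, which instantly eliminates every three-spring junction; if it really were a first-order consequence of minimality, the bulk of the paper's case analysis would be superfluous. The same admissibility problem is fatal to the second-order step: the residual force at each shifted sub-junction is an $O(\varepsilon)$ vector that is in general not parallel to the segment joining $p+\varepsilon e$ to $p+\varepsilon f$, and a single spring transmits only axial force, so one rebalancing segment cannot restore $\partial P_\varepsilon=Q$; any additional short members needed carry $O(\varepsilon)$ force over $O(\varepsilon)$ length and contribute at exactly the order $\varepsilon^2$ you are trying to isolate, so they cannot be discarded. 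The promised cross-term proportional to $u_c\cdot u_s$ is never actually exhibited.

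The paper's route is entirely different: it is a concrete surgery rather than a variation. One truncates the offending spring(s) at distance $x$ from the junction and inserts an explicit ``fan'' of springs spread over a small circular arc, computes the mass difference $f(x)$ in closed trigonometric form, and shows that $f'(0)<0$ reduces to inequalities such as $(\pi-\alpha-\beta)\sin\alpha\sin\beta<\sin(\alpha+\beta)$, which hold precisely so that a strict improvement exists whenever the angles are not right angles; junctions with four or more springs are reduced to the three-spring case by splitting the force system into sub-systems. Your fallback via the Bouchitt\'e--Gangbo--Seppecher dual displacement field is indeed the standard conceptual explanation of orthogonality (a symmetric strain tensor cannot have non-orthogonal eigenvectors for the distinct eigenvalues $+1$ and $-1$), and could in principle replace the computation; but as written it is a one-sentence appeal that presupposes existence of an optimal dual field with no duality gap, its differentiability at $p$, and saturation of the constraint along every member through $p$ --- none of which is established here. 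As it stands the proposal does not prove condition (2).
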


\noindent \textbf{Proof}
    The first condition is obvious, because there was no force in the system at this point originally. So the sum of forces due to these finitely many springs must be zero. 

    For the second condition, we are going to prove by contradiction. Suppose not, one can draw a small circle around the point and reduce the mass further, which is a contradiction to the hypothesis that this is a minimizer. 

    Let's begin with the simplest case of three springs. Without loss of generality, we may assume that there are two stretched springs and one compressed spring.

\noindent \textbf{Case 1} If the angle between each of the stretched springs with the compressed spring is acute, then one can reduce the mass as follows.

\begin{figure}[h]
    \centering
    \includegraphics[width=0.5\linewidth]{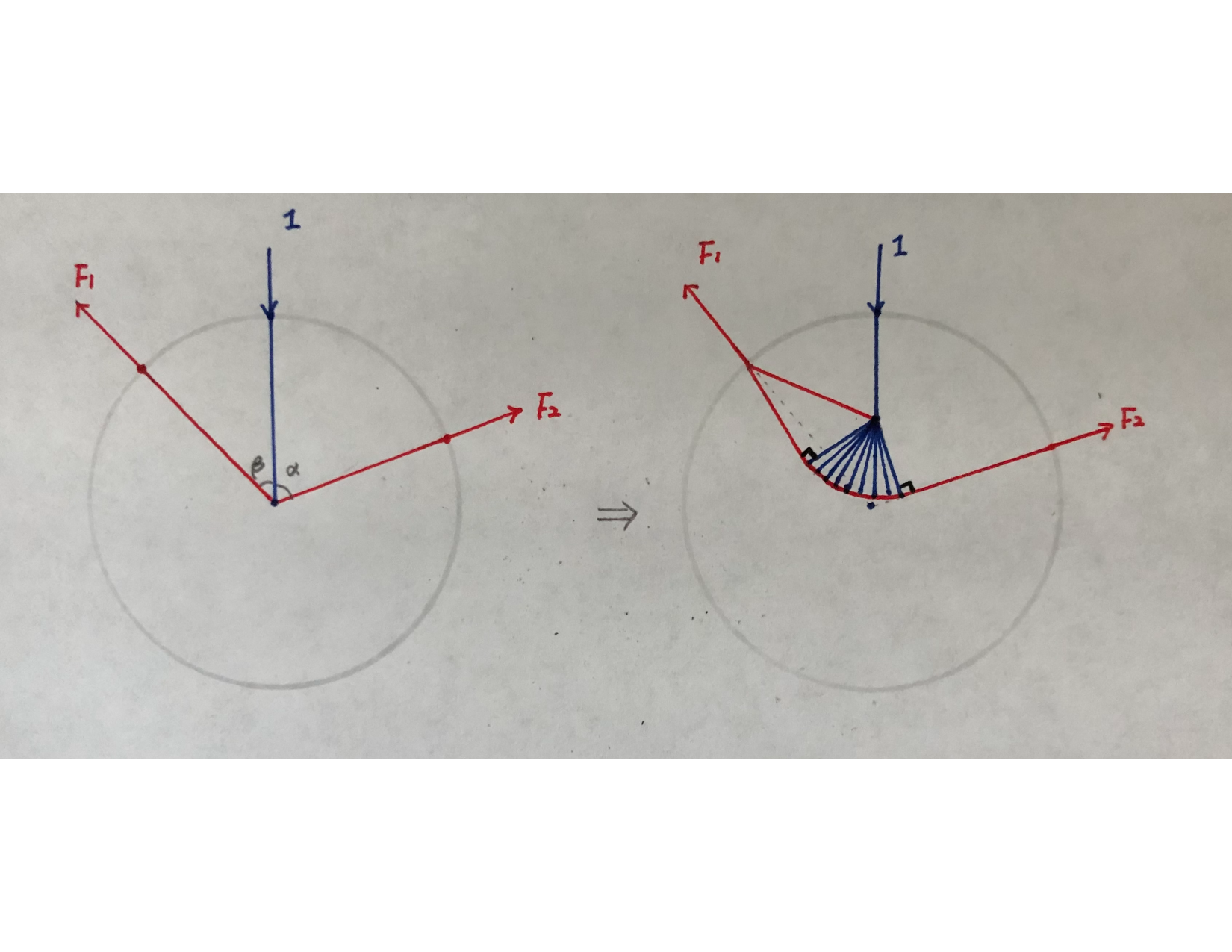}
    \caption{Case 1 of Three Springs}
    \label{fig:case1of3springs}
\end{figure}
    
In figure \ref{fig:case1of3springs}, the blue springs are compressed and the red springs are stretched. After normalization, we may assume that the compressed spring in the middle has unit magnitude and the circle has unit radius as well. Therefore the mass on the left equals $1+|F_1|+|F_2|$. The mass on the right needs some work. Let $x$ be the distance between the bottom of the fan and the center of the circle. One can check that the mass on the right is equal to
\[|F_1|(1-x) + |F_1|(1-x\cos\alpha) +2(\frac{\pi}{2}+\Omega -\alpha)|F_2|\sin(\alpha) x \]
\[+\frac{(|F_1|\sin(\beta)k-|F_1|\cos\beta)(1-2x\cos\beta +x^2)}{k\sin \beta -\cos \beta +x} +|F_2|(\sin \beta \sqrt{1+k^2}-k\sin(\alpha)x). \]
Here, 
\[|F_1| = \frac{\sin \alpha}{\sin(\alpha +\beta)}, |F_2| =\frac{\sin \beta}{\sin(\alpha +\beta)}, \tan(\Omega) = k, \]
\[k = \frac{\sin(\alpha)\sin(\beta)x+(\cos\beta -x)\sqrt{1-2\cos(\beta)x+\cos^2(\alpha)x^2}}{\sin(\beta)\sqrt{1-2\cos(\beta)x+\cos^2(\alpha)x^2}-(\cos\beta-x)\sin(\alpha)x}. \]
The angle $\Omega$ is shown in the figure \ref{fig:case1notation3springs} and the most important thing about it is that $\Omega \rightarrow \beta$ as $x \rightarrow 0$. 

\begin{figure}[h]
    \centering
    \includegraphics[width=0.3\linewidth]{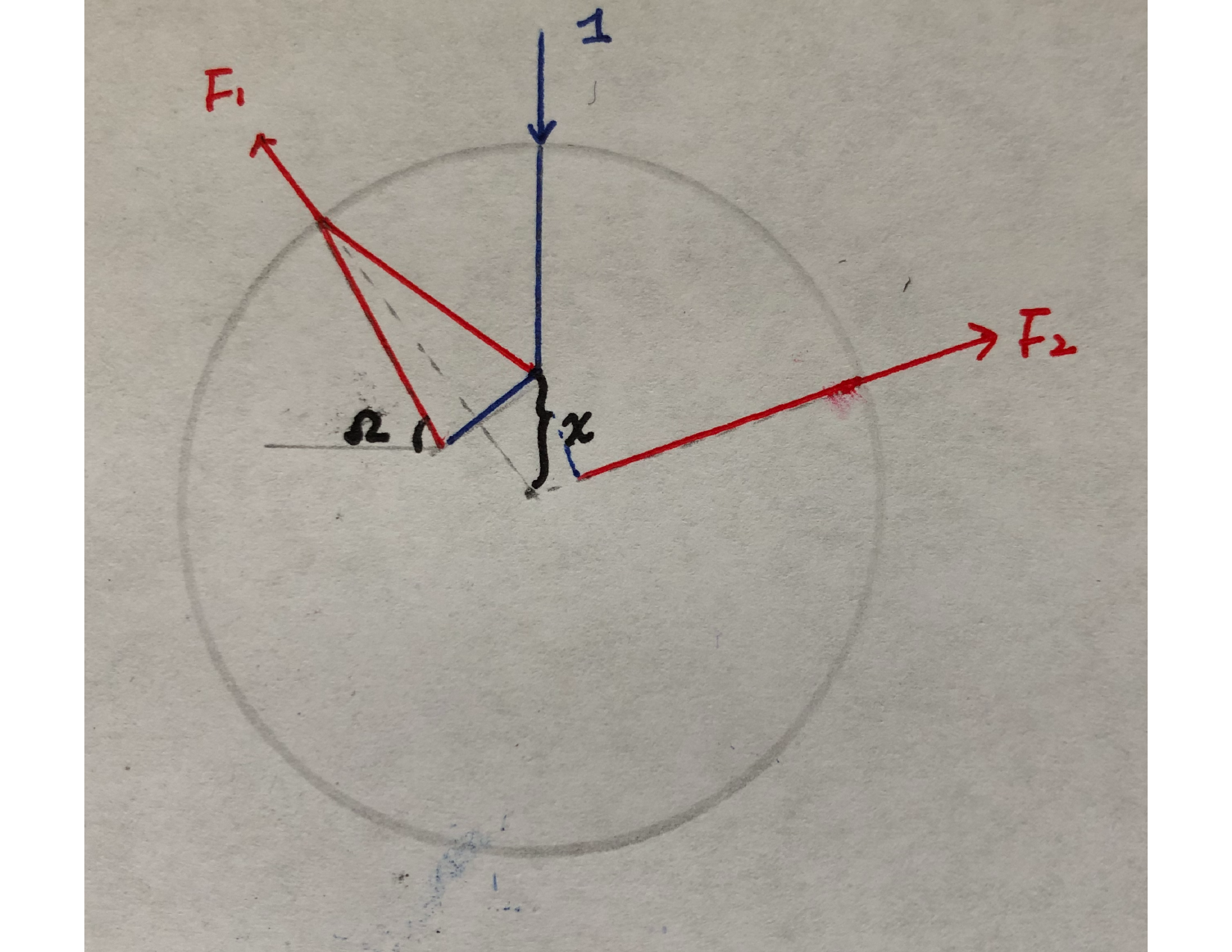}
    \caption{Notation for Case 1 of Three Springs}
    \label{fig:case1notation3springs}
\end{figure}

Let $f(x)$ be the difference between the two masses, and we want to show that $f(x)$ becomes negative as $x$ gets closer to 0 for any arbitrary acute angles $\alpha$ and $\beta$. It suffices to prove that $f'(0)< 0$. One can check that it is equivalent to show 
\[(\pi  -\alpha - \beta)\sin(\alpha)\sin(\beta) < \sin(\alpha + \beta).\]
This indeed holds. 

\noindent \textbf{Case 2} If one of the stretched springs is perpendicular to the compressed spring, say $\alpha = 90^{\circ}$, then the previous argument works verbatim. Furthermore,  the inequality can be simplified to 
\[(\pi/2-\beta)\sin(\beta) < \sin(\pi/2 + \beta).\]

\noindent \textbf{Case 3} If one of the stretched springs makes an obtuse angle with the compressed spring, again the argument is very similar, but there is a slight change in the inequality. Let's replace $\alpha$ with $90^{\circ} + \alpha$ as shown in Figure \ref{fig:case3of3springs}. 

\begin{figure}
    \centering
    \includegraphics[width=0.5\linewidth]{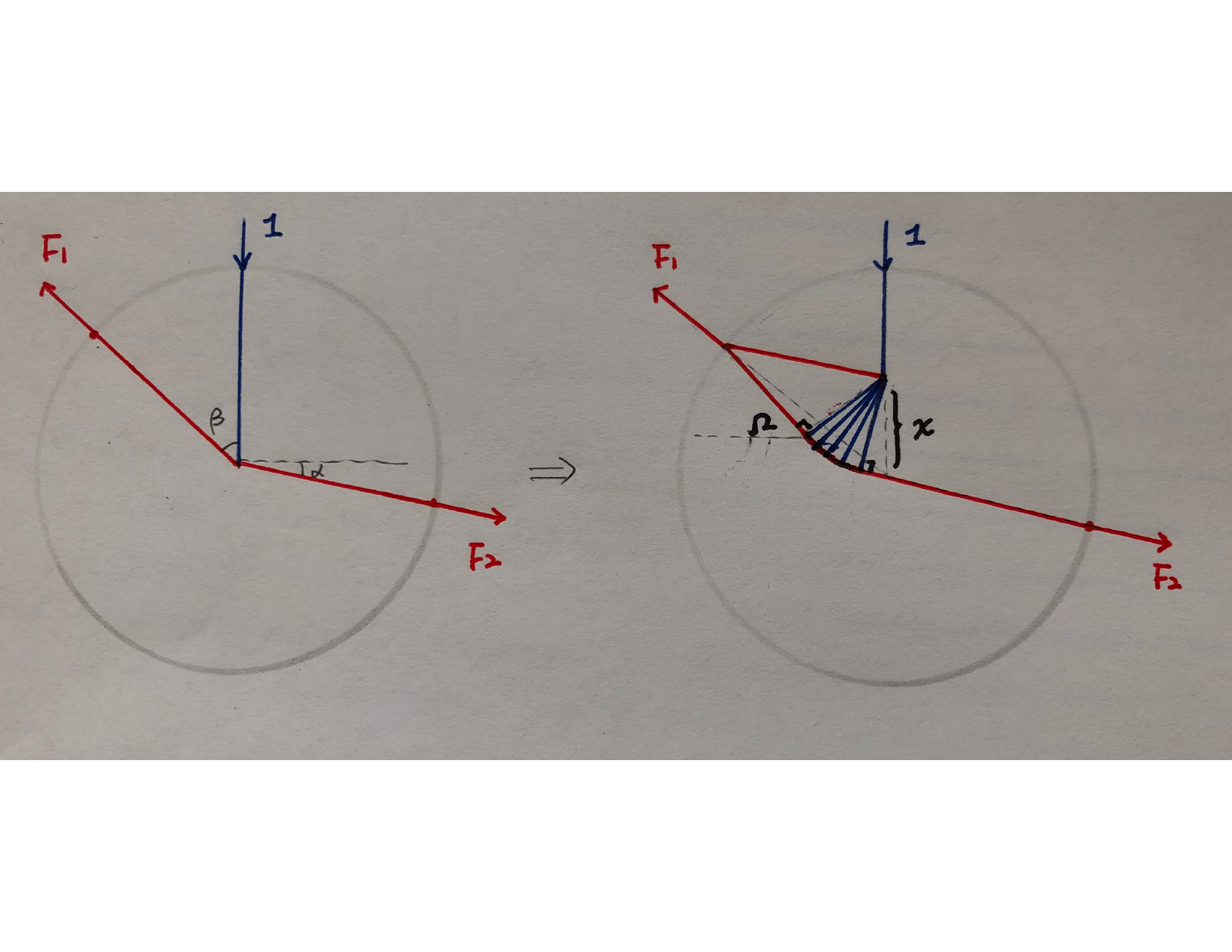}
    \caption{Case 3 of Three Springs}
    \label{fig:case3of3springs}
\end{figure}

One can check the difference $f(x)$ between the new and old systems is equal to
\[-|F_1|-x + |F_2|\sin(\alpha)x + 2|F_2|\cos(\alpha)x(\Omega-\alpha)\]
\[+\frac{(|F_1|\sin(\beta)k-|F_1|\cos\beta)(1-2x\cos\beta +x^2)}{k\sin \beta -\cos \beta +x} +|F_2|(\sin \beta \sqrt{1+k^2}-k\cos(\alpha)x). \]
Here, 
\[|F_1| = \frac{\cos \alpha}{\cos(\alpha +\beta)}, |F_2| =\frac{\sin \beta}{\cos(\alpha +\beta)}, \tan(\Omega) = k, \]
\[k = \frac{\cos(\alpha)\sin(\beta)x+(\cos\beta -x)\sqrt{1-2\cos(\beta)x+\sin^2(\alpha)x^2}}{\sin(\beta)\sqrt{1-2\cos(\beta)x+\sin^2(\alpha)x^2}-(\cos\beta-x)\cos(\alpha)x}. \]
So far everything can be obtained from Case 1 by replacing $\alpha$ with $90^{\circ} + \alpha$. However, when trying to show $f'(0)< 0$, the inequality will be different. 
\[\cos(\alpha) \sin(\beta)(\pi/2-\alpha - \beta) < \cos(\alpha + \beta). \]

In all cases of three springs, we proved that the mass can be reduced in the fan construction as long as $x$ is small enough for any $\alpha$ and $\beta$. There are infinitely many choices for $x$, for example,
\[x = 0.01\cos(\beta)\beta \frac{\cos(\frac{\alpha+\beta}{2})-\sin(\frac{\alpha+\beta}{2})}{\cos(\frac{\alpha-\beta}{2})-\sin(\frac{\alpha-\beta}{2})}.\]

Now let's move onto the next case of four springs. We can find a compressed spring between two stretched springs or vice versa, otherwise the system will not be balanced. 

\noindent \textbf{Case 1} If the angle between the two stretched springs is less than $180^{\circ}$, then we decompose the system into two sub-systems. 

\begin{figure}
    \centering
    \includegraphics[width=0.5\linewidth]{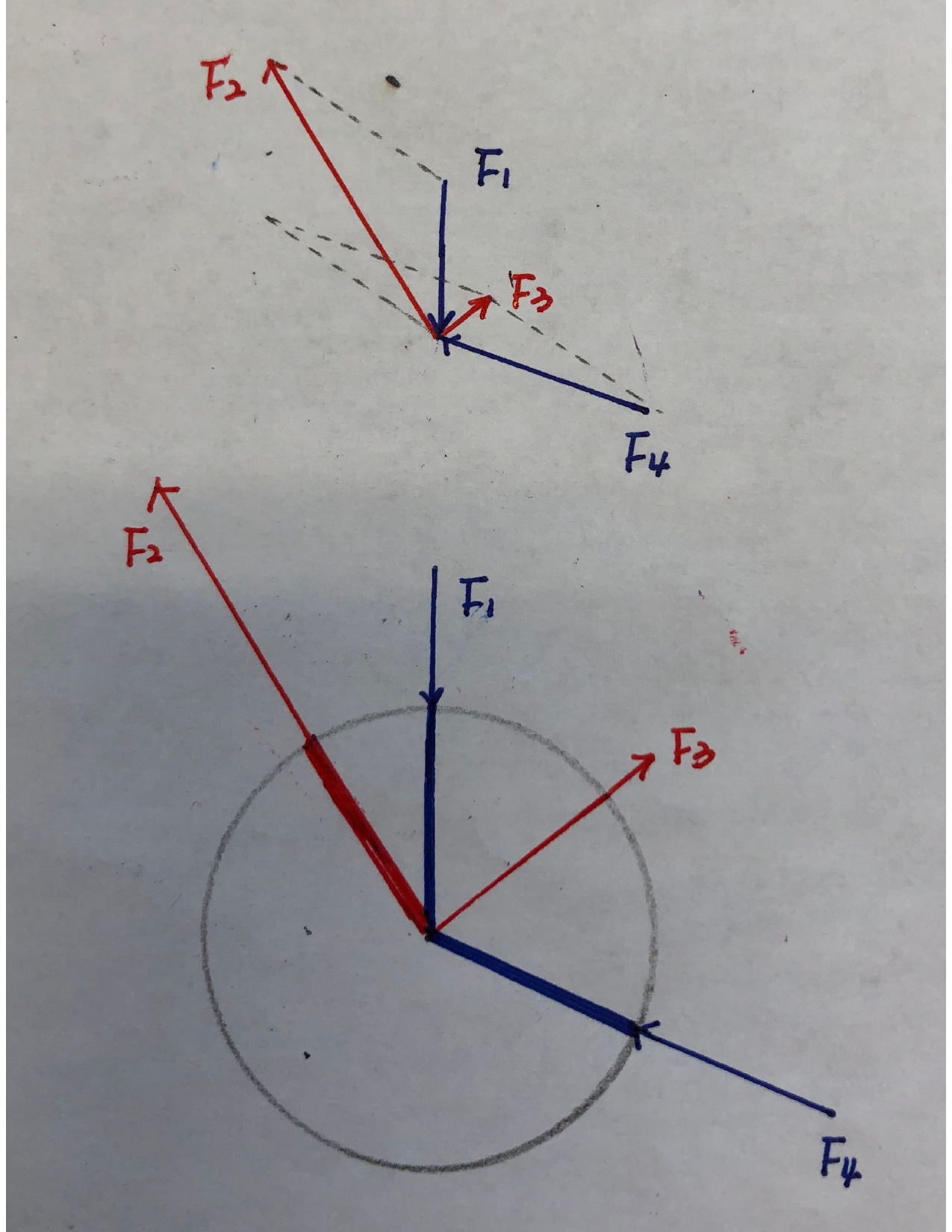}
    \caption{Case 1 of Four Springs}
    \label{fig:case14springs}
\end{figure}

More precisely, in Figure \ref{fig:case14springs}, since the forces $F_1, F_2, F_3$ are linearly independent, one can express $F_1$ as 
\[F_1 = -aF_2 - bF_3, \]
for some positive numbers $a$, $b$. If $c$ is positive and sufficiently close to 0, then $ac < 1$ and $bc < 1$. Since
\[cF_1 + caF_2 + cbF_3 = 0,\]
\[(1-c)F_1 + (1-ac)F_2 + (1-bc)F_3 + F_4 = 0,\]
one can decompose the original system $\{F_1, F_2, F_3, F_4\}$ into $\{cF_1, caF_2, cbF_3\}$ and $\{(1-c)F_1, (1-ac)F_2, (1-bc)F_3, F_4\}$. It remains to reduce the total mass for the system $\{cF_1, caF_2, cbF_3\}$. This can be done based on the previous argument for three springs. 

\noindent \textbf{Case 2} If the angle between the two stretched spring is exactly $180^{\circ}$,  either the two stretched springs are different or the same. See Figure \ref{fig:case24springs} . 

\begin{figure}
    \centering
    \includegraphics[width=0.5\linewidth]{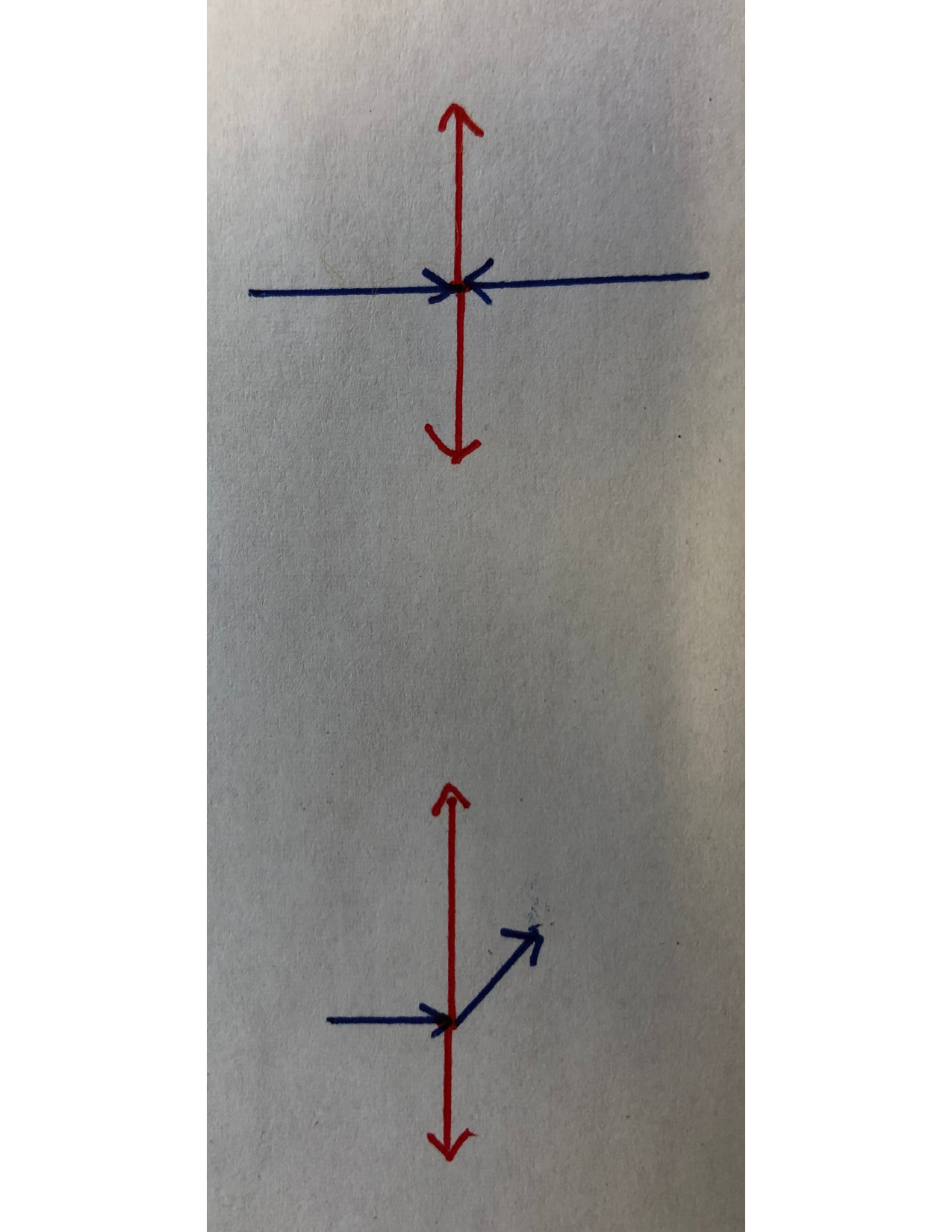}
    \caption{Case 2 of Four Springs}
    \label{fig:case24springs}
\end{figure}

In the first situation,  since the boundary forces due to the stretched springs are not zero, one can find a stretched spring bounded between two compressed springs, therefore returning to Case 1. 
In the second situation,  there is another pair of compressed springs of the same boundary force and forming an angle of $180^{\circ}$. Since we can't apply the same technique as in Case 1, new constructions must be found. Suppose the boundary forces of the compressed springs are $F_1, -F_1$, and of the stretched springs are $F_2, -F_2$. We need an auxillary pointed force $F_3$ satisfying $F_3= F_1 + F_2$. Depending on whether the angle between $F_1$ and $F_3$ is acute, right, or obtuse, again we need to separate into difference cases. 

When the angel is obtuse, Figure \ref{fig:case24springsobtuse}  suggests another way to reduce the mass. 

\begin{figure}
    \centering
    \includegraphics[width=0.5\linewidth]{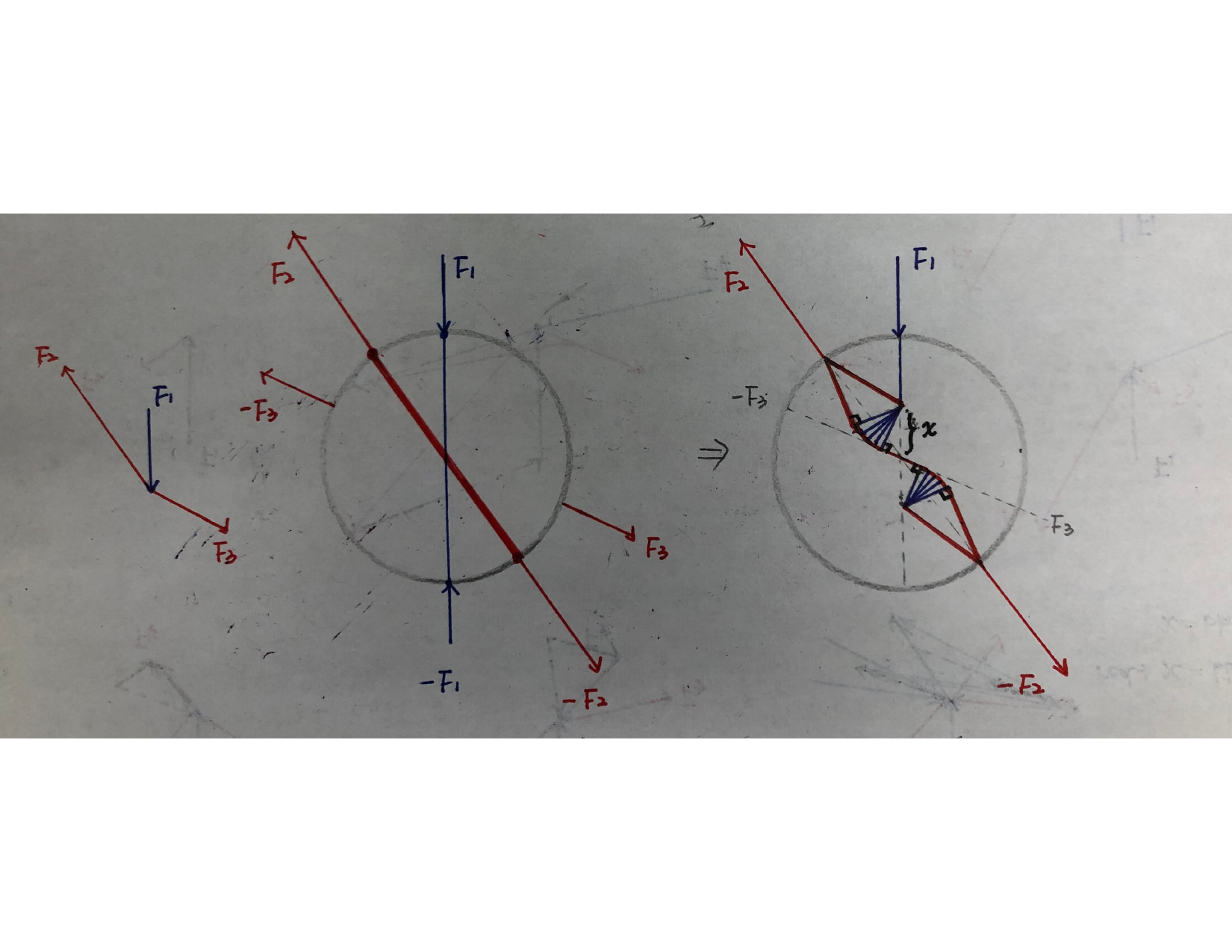}
    \caption{The angle between $F_1$ and $F_3$ is obtuse}
    \label{fig:case24springsobtuse}
\end{figure}

One could apply Case 3 of three springs by sticking two symmetric copies of Figure \ref{fig:case3of3springs} and deleting the stretched springs connecting the center with $F_3$, $-F_3$.

On the other hand, when the angle is acute or right, Figure \ref{fig:case24springsacute} suggests one way to reduce the mass. Although we could not use Figure \ref{fig:case1of3springs} directly, one can make a similar construction. The inequality associated with proving smaller mass as $x$ approaches zero is 
\[(\pi - 2\beta + \cos \beta)\sin \beta < 3 \cos \beta. \]

\begin{figure}
    \centering
    \includegraphics[width=0.5\linewidth]{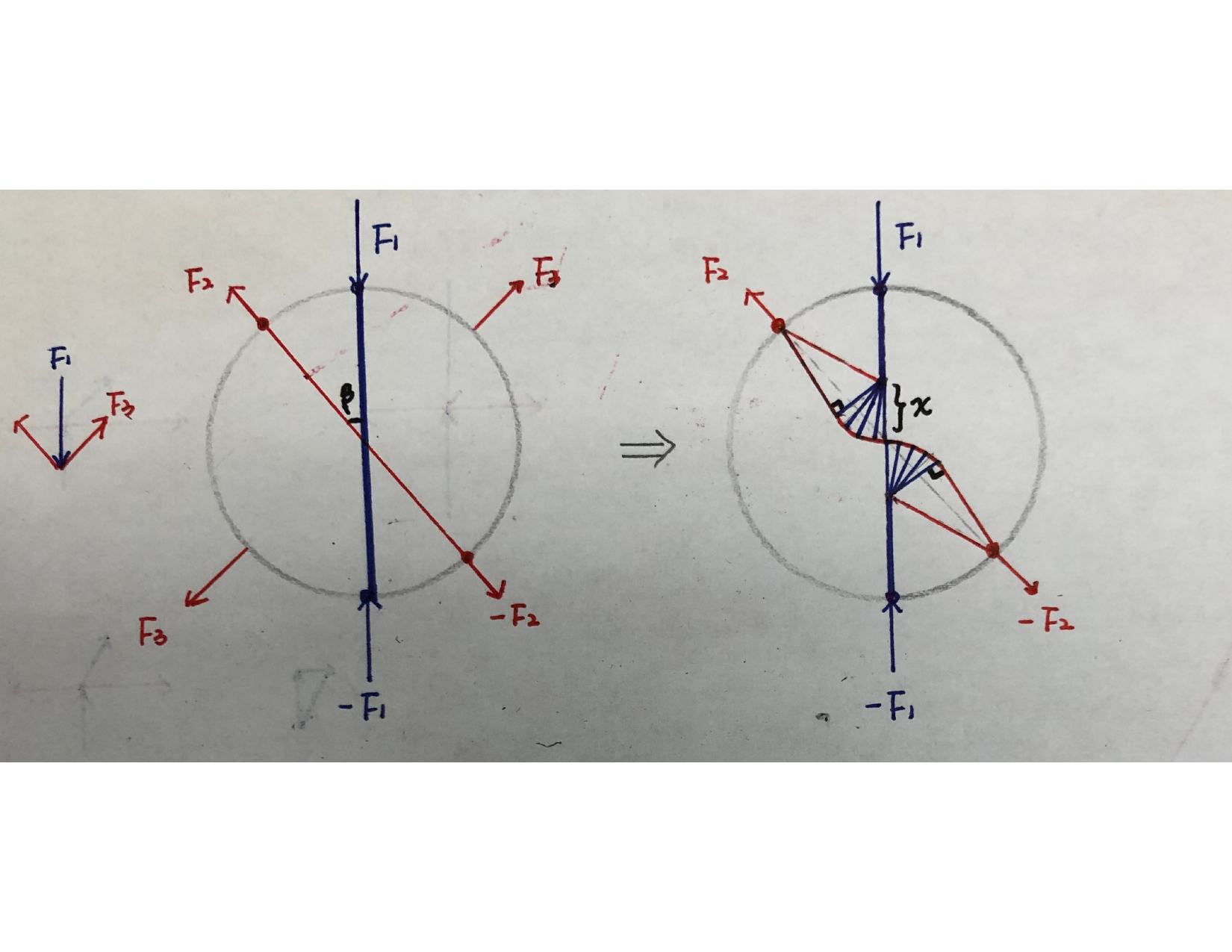}
    \caption{The angle between $F_1$ and $F_3$ is acute or right.}
    \label{fig:case24springsacute}
\end{figure}

Finally let's finish the lemma by looking at the general case. If the number of springs is greater than or equal to five, one can always show that there is one compressed spring between two stretched springs or vice versa. Repeating the same argument as in Case 1 for four springs, one can decompose the system $\{F_1, F_2, F_3, \ldots, F_n\}$,for $n \geq 5$, into the following two sub-systems:
\[cF_1 + caF_2 +cbF_3=0,\]
\[(1-c)F_1 + (1-ac)F_2 + (1-bc)F_3 + F_4 + \cdots + F_n=0.\]


\section{Conclusion}
In this paper, we looked at the Mitchell Trusses in terms of flat chain complex and measures. Although the Plateau problem can be solved, the proof does not tell us how to find such a minimizer. The important proposition tells us that at a point with no pointed force originally, springs of different types form an angle of $90$ degrees. We conjecture that the `fans' only occur at boundary points.

\section{Bibliography}

\end{document}